\newcounter{alphathm}
\newtheorem{theorem}{Theorem}[section]
\newtheorem{corollary}[theorem]{Corollary}
\newtheorem{lemma}[theorem]{Lemma}
\newtheorem{proposition}[theorem]{Proposition}
\theoremstyle{definition}
\newtheorem{definition}[theorem]{Definition}
\newtheorem{remark}[theorem]{Remark}
\newtheorem{theorem*}[alphathm]{Theorem}
\numberwithin{equation}{section} 
\newcommand{\NN}{\mathbb N}
\newcommand{\ZZ}{\mathbb Z}
\newcommand{\RR}{\mathbb R}
\newcommand{\FF}{\mathbb F}
\newcommand{\Fp}{\FF_p}
\DeclareMathOperator{\spn}{span}
\newcommand{\bR}{R}
\newcommand{\F}{{\mathsf F}}
\newcommand{\N}{\mathbb N}
\newcommand{\colonequal}{\mathrel{\mathop:}=}
\newcommand{\bk}{k}
\newcommand{\bh}{\boldsymbol h}
\newcommand{\bt}{\boldsymbol t}
\newcommand{\br}{\boldsymbol r}
\newcommand{\bi}{\boldsymbol i}
\newcommand{\bj}{\boldsymbol j}
\newcommand{\by}{\boldsymbol y}
\newcommand{\bu}{\boldsymbol u}
\newcommand{\bv}{\boldsymbol v}
\newcommand{\brho}{\boldsymbol \rho}
\newcommand{\comp}{{\rm comp}}
\newcommand{\mB}{\mathcal B}
\newcommand{\res}{{\mathsf{res}}}
\newcommand{\Card}{{\rm Card}}
\newcommand{\NP}{{\rm NP}}
\newcommand{\GL}{{\rm GL}}
\newcommand{\softO}{\tilde{{O}}}
\newenvironment{algoenv}[3][\linewidth]{
  \begin{minipage}{#1}%
  \flushleft
  \rule{\textwidth}{.08em}\vspace{-0.5\baselineskip}\smallskip
  \begin{description}[noitemsep]
  \item[Input] #2
  \item[Output] #3
  \end{description}
  \vspace{-0.7\baselineskip}
  \rule{\textwidth}{.05em}
  \begin{algorithmic}
}{\end{algorithmic}
\vspace{-.5\baselineskip}
\rule{\textwidth}{.08em}
\end{minipage}}
\def\pFqnoargs#1#2{{}_#1F_#2}
\def\pFq#1#2#3#4#5#6{\pFqnoargs{#1}{#2}\biggl(\begin{matrix}%
{#3}\kern.707em{#4}\\{#5}%
\end{matrix}\,\bigg|\,#6\biggr)}
\title[A sharper multivariate Christol's theorem]
{A sharper multivariate  Christol's theorem with \\  applications to diagonals and Hadamard products}
\date{}
\author{Boris Adamczewski}
\address{B. Adamczewski: Univ. Claude Bernard Lyon 1, CNRS UMR 5208, Institut Camille Jordan, 43 blvd. du 11 novembre 1918, F-69622 Villeurbanne cedex, France}
\email{boris.adamczewski@math.cnrs.fr}
\author{Alin Bostan}
\address{A. Bostan: Inria, Université Paris-Saclay, 1 rue Honoré d'Estienne d'Orves, 91120 Palaiseau, France}
\email{alin.bostan@inria.fr}
\author{Xavier Caruso}
\address{X. Caruso: CNRS, IMB, Universit\'e de Bordeaux, 351 cours de la Lib\'eration, 33405 Talence, France}
\email{xavier@caruso.ovh}
\keywords{Christol's theorem; automatic sequences; algebraic power series; diagonals; Hadamard products}
\thanks{Partially supported by the French grant DeRerumNatura (ANR-19-CE40-0018), and by
the French--Austrian project EAGLES (ANR-22-CE91-0007 \& FWF I6130-N)}
\begin{document}
\maketitle

\begin{abstract}
We provide a new proof of the multivariate version of Christol's theorem about algebraic power
series with coefficients in finite fields, as well as of its extension to perfect ground fields of
positive characteristic obtained independently by Denef and Lipshitz, Sharif and Woodcok, and
Harase.
Our proof is elementary, effective, and allows for much sharper estimates. We discuss various
applications of such estimates, in particular to a problem raised by Deligne concerning the
algebraicity degree of reductions modulo $p$ of diagonals of multivariate algebraic power series
with integer coefficients. 
\end{abstract}

\setcounter{tocdepth}{1}
\tableofcontents

%%%%%%%%%%%%%%%%%
\section{Introduction}\label{sec: intro}

Rational and algebraic power series play an important role in various areas of mathematics, and
especially in number theory and combinatorics.
There are two fundamental results concerning rationality of power series in one variable.  
The first one is that, given an arbitrary field $k$, a power series $f(t)=\sum_{n=0}^\infty
a(n)t^n\in k[[t]]$ is rational if and only if its coefficient sequence $a(n)$ satisfies a linear 
recurrence with coefficients in $k$.
The second one is the famous Skolem-Mahler-Lech theorem stating that, when $k$ is a field of
characteristic zero, the zero set
\[
\mathcal Z(f) \coloneqq\{n\in\mathbb N : a(n)=0\}
\]
of a rational power series $f\in k[[t]]$ is a \emph{periodic} set, that is the union of a finite
set and of finitely many arithmetic progressions.
When $k$ has characteristic zero, it seems difficult to obtain similar results for \emph{algebraic}
power series, \emph{i.e.} power series $f \in k[[t]]$ for which there exists a nonzero bivariate
polynomial $P \in k[t,y]$ such that $P(t,f) = 0$. 
Although it is known that the coefficient sequences of univariate algebraic power series do
satisfy linear recurrences with polynomial coefficients, a characterization of such sequences is
still lacking. On the other hand, proving that the corresponding zero sets are periodic remains a
challenging open problem (cf.~\cite[p.\ 176]{Za09}). 
The situation in several variables is worse and not much is known or even conjectured about zero
sets of multivariate rational power series; this is very unfortunate, as they encode interesting
Diophantine problems.

In a short but influential paper, Furstenberg \cite{Fu67} observed for the first time that
algebraic power series over ground fields of positive characteristic have a very particular
structure. For instance, when $k=\mathbb F_q$ is a finite field, he proved that the ring of
algebraic power series is closed under the Hadamard product and that $\sum_{n=0}^\infty a(n)t^n\in
\mathbb F_q[[t]]$ is algebraic over $\mathbb F_q(t)$ if and only if $\sum_{n : a(n)=a}t^n$ is
algebraic for every $a\in \mathbb F_q$; these two properties do not hold in characteristic zero.
Later, Christol \cite{Ch79} elaborated on Furstenberg's approach and proved the following beautiful
result.

\begin{theorem*}[Christol]
\label{thm:christol}
A power series $\sum_{n=0}^\infty a(n)t^n\in \mathbb F_q[[t]]$ is algebraic over $\mathbb
F_q(t)$ if and only if its coefficient sequence $a(n)$ can be generated by a finite
$q$-automaton.
\end{theorem*}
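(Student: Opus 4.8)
The plan is to translate the automaton side into the language of the \emph{$q$-kernel} via section operators, and then establish the two implications by elementary algebra. Write $p$ for the characteristic, so $q=p^{s}$. For $0\le j<q$ and $g=\sum_n a_g(n)\,t^n\in\FF_q[[t]]$ set $\Lambda_j(g)=\sum_n a_g(qn+j)\,t^n$. Since $\FF_q$ is perfect, raising to the $q$-th power fixes coefficients and sends $t$ to $t^q$, so $g=\sum_{j=0}^{q-1}t^j\,\Lambda_j(g)^q$; iterating, the power series $\Lambda_{\mathbf w}(f):=\Lambda_{j_k}\!\circ\cdots\circ\Lambda_{j_1}(f)$, where $\mathbf w=j_1\cdots j_k$ runs over all finite words (the empty one giving $f$), are precisely the generating functions $\sum_n a(q^k n+r)\,t^n$, $0\le r<q^k$, of the members of the $q$-kernel of $(a(n))$. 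By Eilenberg's theorem, $(a(n))$ is generated by a finite $q$-automaton iff this kernel is finite, which—since $\FF_q$ is finite—means exactly that the set $\{\Lambda_{\mathbf w}(f)\}$ is finite. A short computation from the decomposition (expanding each coefficient of a product into its $t^r$-components) shows that in any case the $\FF_q(t)$-vector space $V\subseteq\FF_q((t))$ spanned by the $\Lambda_{\mathbf w}(f)$ is stable under every $\Lambda_j$; note $f\in V$.

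\textit{Automatic $\Rightarrow$ algebraic.} Assume $\{\Lambda_{\mathbf w}(f)\}$ finite, so $V$ is finite-dimensional. Fix a basis $g_1,\dots,g_d$ of $V$ consisting of kernel elements and write $\Lambda_j(g_i)=\sum_k B^{(j)}_{ik}g_k$ with $B^{(j)}_{ik}\in\FF_q(t)$. Substituting into the decomposition and using that $q$-th powers of rational functions lie in $\FF_q(t)$ gives $g_i=\sum_k c_{ik}\,g_k^{q}$ with $c_{ik}\in\FF_q(t)$. Hence the field $L:=\FF_q(t)(g_1,\dots,g_d)$—finitely generated over the perfect field $\FF_q$—satisfies $L=\FF_q(t)\cdot L^q$. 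As $\FF_q(t^q)\subseteq L^q$, this forces $[L:L^q]\le[\FF_q(t):\FF_q(t^q)]=q$, whereas $[L:L^q]=q^{\operatorname{trdeg}_{\FF_q}L}$. Therefore $\operatorname{trdeg}_{\FF_q}L=1$, so $L/\FF_q(t)$ is algebraic and, being finitely generated, finite; in particular $f\in L$ is algebraic over $\FF_q(t)$.

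\textit{Algebraic $\Rightarrow$ automatic.} If $f$ is algebraic, put $K:=\FF_q(t)(f)$; then $\operatorname{trdeg}_{\FF_q}K=1$, so $[K:K^q]=q$, and since $1,t,\dots,t^{q-1}\in K$ are linearly independent over $\FF_q((t))^q\supseteq K^q$ they form a $K^q$-basis of $K$. Thus every $g\in K$ writes as $\sum_j t^j\gamma_j^q$ with $\gamma_j\in K$, and uniqueness of this decomposition in $\FF_q((t))$ gives $\Lambda_j(g)=\gamma_j\in K$; hence $V\subseteq K$ is finite-dimensional over $\FF_q(t)$. The substantive step is to upgrade this to finiteness of the set $\{\Lambda_{\mathbf w}(f)\}$—i.e. to bound the denominators of the kernel elements uniformly. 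First reduce to $f$ separable over $\FF_q(t)$: if the minimal polynomial has the form $N(t,y^{p^e})$ with $N$ separable in $y$, replace $f$ by $f^{q^\ell}$ for $q^\ell\ge p^e$ (this is separable) and note $f=\Lambda_0^{\ell}(f^{q^\ell})$, so the kernel of $f$ sits inside that of $f^{q^\ell}$. Then, by Furstenberg's classical result that a separable algebraic power series is, after a harmless normalization, the diagonal $\Delta(N_0/M_0)$ of a bivariate rational function with $N_0,M_0\in\FF_q[x,y]$ and $M_0(0,0)\ne 0$, one argues as follows. Writing $\Lambda_{j,j}$ for the bivariate section operator keeping monomials whose two exponents are both $\equiv j\pmod q$, we have $\Lambda_j\circ\Delta=\Delta\circ\Lambda_{j,j}$; moreover $\Lambda_{j,j}$ passes through the pure $q$-th-power denominator, $\Lambda_{j,j}(N/M_0^{q})=\Lambda_{j,j}(N)/M_0^{q}$, and it divides bidegrees by $q$. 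Hence $\{\,N/M_0^{q}:\deg_x N,\deg_y N\le B\,\}$—a finite-dimensional vector space over the finite field $\FF_q$, hence a \emph{finite} set—is stable under all $\Lambda_{j,j}$ and contains $N_0M_0^{q-1}/M_0^{q}$ for $B$ large; therefore $\{\Lambda_{\mathbf w}(f)\}$, being the diagonals of the finitely many $\Lambda_{\mathbf w,\mathbf w}$-iterates of that function, is finite, so $(a(n))$ is automatic.

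\textit{Main obstacle.} Everything outside the last paragraph is routine linear and field algebra; the crux is the upgrade in the algebraic $\Rightarrow$ automatic direction, from ``$V$ finite-dimensional over $\FF_q(t)$'' to ``$\{\Lambda_{\mathbf w}(f)\}$ finite''. Iterating section operators on an algebraic power series naively lets denominators (and algebraicity degrees) grow without bound, and one needs a structural device—Furstenberg's diagonal trick, which conceals the denominator inside a $q$-th power that the section operators cannot disturb, or an equivalent hands-on construction—to trap everything in a single finite set. Carrying out this bookkeeping with explicit control on the bidegrees of $N_0,M_0$ and on the separability reduction, rather than the qualitative ``$B$ large enough'', is what should yield the sharper quantitative estimates.
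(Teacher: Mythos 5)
Your two implications are both in good shape in outline, but they follow a genuinely different path than the paper's, and the one nontrivial step that you flag yourself as ``the crux'' is exactly the step that the paper's method is designed to avoid.

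For \emph{automatic} $\Rightarrow$ \emph{algebraic}, your argument via $L = \FF_q(t)\cdot L^q$, the bound $[L:L^q]\le q$, and the identity $[L:L^q]=q^{\operatorname{trdeg}_{\FF_q}L}$ is correct and pleasantly abstract; the paper does not spell out this direction (it quotes Eilenberg's theorem and appeals to the classical literature), so here you are simply supplying a clean proof of the standard fact.

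For \emph{algebraic} $\Rightarrow$ \emph{automatic}, you reduce to the separable case (fine, and the reduction $f=\Lambda_0^{\ell}(f^{q^\ell})$ is correct), and then invoke Furstenberg's diagonal representation $f=\Delta(N_0/M_0)$ to trap the whole $q$-kernel inside a finite set $\{N/M_0^{q}:\deg N\le B\}$. This is the paper's ``approach (2)'' (rationalization), and it does work for $n=1$, but the phrase ``after a harmless normalization'' conceals the real work: Furstenberg's formula $f=\Delta\bigl(y^2 P_y(xy,y)/P(xy,y)\bigr)$ only applies when $P(0,0)=0$, $P_y(0,0)\ne 0$ and $f(0)=0$, and arranging this for an arbitrary separable algebraic $f$ requires resultant gymnastics (as in \cite{AY,AB13}). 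So the normalization is not harmless; it is precisely where the ``tedious bookkeeping'' you mention at the end lives. (A minor slip in the same paragraph: $\Lambda_{j,j}(N/M_0^q)=\Lambda_{j,j}(N)/M_0$, not $\Lambda_{j,j}(N)/M_0^{q}$; you must then multiply by $M_0^{q-1}/M_0^{q-1}$ and check the degree bound $\deg\Lambda_{j,j}(N)+(q-1)\deg M_0\le B$, which forces $B\gtrsim q\deg M_0$ — harmless, but worth writing out.)

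The paper's proof takes a different route that bypasses rationalization entirely. Instead of expressing $f$ as a diagonal, it works with the expressions $P(\bt,f)/E_y(\bt,f)$ directly, where $E$ is the minimal polynomial. The key device is the residue identity $\res\bigl(P(f+T)/E(f+T)\bigr)=P(f)/E_y(f)$ (\cref{lem: furstenberg}, a variant of Furstenberg's lemma that requires only that $f$ be a simple root of $E$, with no normalization at the origin), combined with the commutation $S_{\br}\circ\res=\res\circ S_{f,\br,p-1}$ (\cref{prop: res}) over $K((T))$ in the adapted basis $\bt^{\br}(f+T)^s$. This produces the invariant space $W=\{P(\bt,f)/E_y(\bt,f):\NP(P)\subset C'\}$ with an explicit Newton-polytope bound on its dimension. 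So where your argument needs to first massage $f$ into the special form required by Furstenberg's diagonal formula and then control bidegrees through the diagonal, the paper's residue trick applies uniformly to any separable annihilating polynomial and gives the sharp bound in one stroke. Your proof is essentially a faithful reconstruction of the classical argument and identifies the right bottleneck; what it does not supply is the device the paper introduces to remove that bottleneck.
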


We refer the reader to \cite[Chapters 4 and 5]{AS03} for the notions of finite automata and
automatic sequences. A different proof of Christol's theorem was given in \cite{CKMR}, from which
Salon ~\cite{Sa87,Sa86} also derived a natural extension to multivariate power series. What makes
Christol's theorem deep and fascinating is that it establishes an intimate connection between two
important objects coming from seemingly unrelated areas.

When the ground field $k$ is an arbitrary field of positive characteristic, the characterization of
algebraic power series in terms of finite automata is somewhat lost but there are still important
related results. Furstenberg \cite{Fu67} proved that the diagonal of a multivariate rational power
series remains algebraic; this result was generalized by Deligne \cite{De84} to diagonals of
multivariate algebraic power series. A related result is that the ring of multivariate algebraic
power series is closed under the Hadamard product (cf.\ \cite{DL,SW,Ha88}). More recently, Derksen
\cite{De07} found an appropriate version of the Skolem-Mahler-Lech theorem involving finite
automata. Derksen's theorem was then generalized to zero sets of arbitrary multivariate algebraic
power series by Adamczewski and Bell \cite{AB12}.

It turns out that all the previously mentioned results (in positive characteristic) can be proved
by using some splitting process associated with the Frobenius map (cf.\ Equality \eqref{eq:
basis1}).
Let $\bk$ be a perfect field of characteristic $p>0$. 
Then the Frobenius endomorphism~$\F$, that maps $x$ to $x^p$, is an automorphism of $\bk$. 
Let $\bt = (t_1,\ldots,t_n)$ be indeterminates, and let~$K$ denote the field of fractions of the
ring of power series $k[[\bt]]\coloneqq \bk[[t_1,\ldots,t_n]]$.
The Frobenius map $\F$ extends naturally to $K$ as an injective homomorphism. 
We let~$K^{\langle p\rangle}$ denote the image of~$K$ by $\F$, so that $\F$ defines an isomorphism
between $K$ and~$K^{\langle p\rangle}$.
Then $K$ is a $K^{\langle p\rangle}$-vector space of dimension $p^n$, a basis being given by all
the products of the form $\bt^{\br}\coloneqq t_1^{r_1}\cdots t_n^{r_n}$, with $\br
\coloneqq(r_1,\ldots,r_n)\in \{0,\ldots,p-1\}^n$.
Thus, every $f\in K$ has a unique expansion of the form 
\begin{equation}
\label{eq: basis1}
	f= \sum_{\br \in \{0,\ldots,p-1\}^n}\bt^{\br} f_{\br}\,.
\end{equation}
For every $\br\in \{0,\ldots,p-1\}^n$, the \emph{section operator} $S_{\br}$ is defined by 
\begin{equation}
\label{eq: section1}
S_{\br}(f)\coloneqq \F^{-1}(f_{\br})\,.
\end{equation}
Section operators are semilinear maps from $K$ into itself. 
For a power series $f \coloneqq\sum_{\bi \in \mathbb N^n}a(\bi)\bt^{\bi}$ $\in
k[[\bt]]$, we have
\[
S_{\br}(f) = \sum_{\stackrel{\bi \in \mathbb N^n}{\bi=(i_1,\dots ,i_n)}}a(pi_1+r_1,\ldots,pi_n+r_n)^{1/p}\bt^{\bi}\in k[[\bt]]] \,.
\]
We let $\Omega_n$ denote the monoid generated by all section operators under composition.

At the end of the 1980s, Denef and Lipshitz \cite{DL}, Sharif and Woodcock \cite{SW} and Harase
\cite{Ha88} obtained independently the following nice characterization of multivariate algebraic
power series in terms of section operators.

\begin{theorem*} 
\label{thm:B}
Let $k$ be a perfect field of characteristic $p$. A power series $f\in \bk[[\bt]]$ is algebraic
over $\bk(\bt)$ if and only if there exists a finite-dimensional $k$-vector space $W\subset K$
containing $f$ and invariant under the action of $\Omega_n$. 
\end{theorem*}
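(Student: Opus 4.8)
The statement is an "if and only if" so I will treat the two directions separately, and the harder direction will be the forward one (algebraic $\Rightarrow$ existence of a finite-dimensional $\Omega_n$-stable space).

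For the **easy direction**, suppose there is a finite-dimensional $k$-vector space $W \subset K$ with $f \in W$ and $\Omega_n W \subseteq W$. I want to deduce that $f$ is algebraic over $k(\bt)$. The idea is that iterating the decomposition \eqref{eq: basis1} on all elements of a basis of $W$ produces a finite system of Frobenius-semilinear equations whose unknowns are the basis elements; unwinding this system (a "Furstenberg-type" elimination, exactly as in Christol's original argument) shows each basis element, and in particular $f$, satisfies a polynomial equation over $k(\bt)$. Concretely, pick a $k$-basis $g_1, \dots, g_d$ of $W$. For each $\br \in \{0,\dots,p-1\}^n$ and each $j$, the element $S_{\br}(g_j)$ lies in $W$, so $S_{\br}(g_j) = \sum_i c_{\br,i,j} g_i$ for scalars $c_{\br,i,j} \in k$. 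Applying $\F$ and reassembling via \eqref{eq: basis1} gives $g_j = \sum_{\br} \bt^{\br} \F\!\big(\sum_i c_{\br,i,j} g_i\big)$, i.e. a linear system $G = M \cdot G^{\langle p\rangle}$ where $G = (g_1,\dots,g_d)^{\mathsf T}$ and $M$ has entries in $k[\bt]$ obtained from the $\F(c_{\br,i,j})$ times monomials $\bt^{\br}$. From such a system one extracts, by a resultant/linear-algebra elimination over the field $k(\bt)^{\langle p\rangle} \subset k(\bt)$, that each $g_j$ is algebraic over $k(\bt)$; since $f$ is a $k$-linear combination of the $g_j$, it is algebraic too. (Care is needed because $\F$ is only semilinear, but working over the fixed field handles this.)

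For the **hard direction**, assume $f \in k[[\bt]]$ is algebraic over $k(\bt)$ of some degree; I must produce the finite-dimensional $\Omega_n$-stable $W$ containing $f$. The natural candidate is $W \coloneqq \operatorname{span}_k\{\,\omega(f) : \omega \in \Omega_n\,\}$, the smallest $\Omega_n$-stable $k$-subspace of $K$ containing $f$; the entire difficulty is showing this is finite-dimensional. The plan is: first reduce to showing that all the iterated sections $S_{\br_1} \cdots S_{\br_m}(f)$ lie in a single finite-dimensional space that does not depend on $m$. Using the algebraic equation $P(\bt, f) = 0$, I would exhibit an explicit finite-dimensional $k[\bt]$-module (or $k$-vector space after clearing denominators) stable under every $S_{\br}$: the classical choice is something like the span of $\{\, \partial^{\boldsymbol\alpha} f / (\text{a fixed power of } \partial_y P(\bt,f)) \,\}$ or, in the Furstenberg style, the span of the finitely many power series produced by the Furstenberg construction that realizes $f$ as an algebraic function; one then checks directly from \eqref{eq: section1} that $S_{\br}$ maps each generator into the $k$-span of the generators. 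The key closure computation is that $S_{\br}$ interacts well with multiplication: $S_{\br}(g \cdot \F(h)) = S_{\br}(g) \cdot h$, which lets one pull out factors that are already $p$-th powers and thereby keep control of degrees. This is exactly where earlier ideas of Furstenberg and Christol (and Salon in the multivariate case) are used.

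The **main obstacle** is the finite-dimensionality in the hard direction: naively, each application of a section operator could shrink degrees and denominators in an uncontrolled way, and one must pin down an \emph{a priori} bound — depending only on the degree and height of the defining polynomial $P$ — on the dimension of the space generated by all iterated sections of $f$. The heart of the argument is therefore a stability lemma: identify an explicit finitely generated $k[\bt]$-submodule of $K$, containing $f$, that is preserved by all $S_{\br}$, and bound its rank; then its saturation or a suitable finite-dimensional slice gives $W$. Once that lemma is in place, $W$ is $\Omega_n$-stable by construction, contains $f$, and is finite-dimensional, completing the proof. (Indeed, making this bound sharp is precisely the point of the present paper, so here I would be content with existence and defer optimality.)
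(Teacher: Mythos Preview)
Your easy direction is fine and is the standard argument; the paper does not reprove it (Theorem~\ref{thm:B} is quoted as a known result), but your sketch is correct: from $g_j=\sum_i m_{ji}\,g_i^p$ with $m_{ji}\in k[\bt]$ one gets, by iterating Frobenius, that $f,f^p,\ldots,f^{p^d}$ all lie in the $k(\bt)$-span of $\{g_i^{p^d}\}$, hence are linearly dependent over $k(\bt)$.

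For the hard direction your plan correctly isolates the obstacle but does not propose a working construction. The suggestion ``the span of $\{\partial^{\boldsymbol\alpha} f/(\text{a fixed power of }\partial_y P(\bt,f))\}$'' is not a recognizable construction in this setting: partial derivatives of $f$ in the $t_i$ have no good interaction with the section operators in characteristic~$p$, and there is no reason such a span should be $\Omega_n$-stable. The classical route you allude to (Christol--Kamae--Mend\`es~France--Rauzy, Salon, etc.) does work, but it is based on the \emph{Ore relation} $\sum_{i=0}^m a_i f^{p^i}=0$ with $a_0\neq 0$, and the stable space one writes down is spanned by elements of the form $P(\bt)\,f^{p^j}/a_0$ with $\deg P$ and $j$ bounded; you never arrive at this, so the ``stability lemma'' you promise remains unsupported. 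As a plan this is a genuine gap: you have identified \emph{that} a stable finite-dimensional space must be exhibited, but not \emph{which} one.

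The paper's own argument for the hard direction (carried out as the proof of Theorem~\ref{thm: main}) is quite different from either of your suggestions. It takes
\[
W=\Bigl\{\,\frac{P(\bt,f)}{E_y(\bt,f)}\;:\;P\in k[\bt,y],\ \NP(P)\subset \NP(E)+(-1,0]^{n+1}\,\Bigr\},
\]
where $E$ is the minimal polynomial of $f$. Membership $f\in W$ is immediate since $f=yE_y/E_y$ evaluated at $y=f$. Stability under $S_{\br}$ is proved not by a direct degree count on $P(\bt,f)/E_y(\bt,f)$, but by passing to an auxiliary variable $T$, using a residue identity (Lemma~\ref{lem: furstenberg}) to write $P(\bt,f)/E_y(\bt,f)=\res\bigl(P(\bt,f{+}T)/E(\bt,f{+}T)\bigr)$, and then invoking the commutation $S_{\br}\circ\res=\res\circ S_{f,\br,p-1}$ (Proposition~\ref{prop: res}) where $S_{f,\br,p-1}$ is a section operator for the twisted basis $\{\bt^{\br}(f{+}T)^s\}$ of $K((T))$ over $K((T))^{\langle p\rangle}$. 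This is what lets one control the Newton polytope of the new numerator $Q$ precisely, and is the source of the sharp bounds; none of this machinery appears in your plan.
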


\begin{remark}
When $k=\mathbb F_q$, the putative vector space $W$ has to be finite and its existence is thus equivalent to the finiteness of the orbit of $f$ under $\Omega_n$. 
By a classical result of Eilenberg, the latter property is itself equivalent to the fact that the coefficient sequence of  $f$ is $q$-automatic,  
so one recovers the multivariate extension of Christol's theorem.   
\end{remark}

In this paper, we investigate the problem of finding a sharp quantitative version of Theorem~\ref{thm:B},
\emph{i.e.}\ finding a $k$-vector space $W$ having the smallest possible dimension in terms of the
``complexity'' of the algebraic power series~$f$. 

The particular case of multivariate rational power series can be treated in a satisfactory way.
Indeed, if $f(\bt)=A(\bt)/B(\bt)\in k[[\bt]]$ for some $A,B\in k[\bt]$, whose degree in $t_i$ is at
most $h_i$, then it is easy to prove that the $k$-vector space
\[
W  \coloneqq \left\{ \frac{P(\bt)}{B(\bt)} : P\in k[\bt], \; \deg_{t_i} P \leq h_i, 1\leq i\leq n\right\}
\] 
contains $f$ and is invariant under $\Omega_n$. Similarly, if the total degree of both $A$ and $B$ is at most $h$, then the same conclusion applies to 
\[
W' \coloneqq \left\{ \frac{P(\bt)}{B(\bt)} : P\in k[\bt], \; \deg_{\bt} P \leq h \right\}\,.
\]
Furthermore, $W$ and $W'$ have dimensions $(h_1+1)\cdots (h_n+1)$ and $\binom{n+h}{n}$, respectively. 

On the other hand, the case of algebraic irrational multivariate power series is known to be more
difficult and the known estimates are much weaker.
To summarize roughly, the main aim of this paper is to develop a unified method, which is able to
deal with arbitrary multivariate algebraic power series as if they were rational. 
In this direction, our main results are \cref{thm: main} and \cref{cor: main}. The proof of \cref{thm: main} 
follows a new approach recently initiated in \cite{BCCD} for
the case $n=1$. Its main feature is that it combines all the advantages of
the different methods used so far to study (part of) this problem (cf.\
\cite{Ch79,CKMR,Sa86,Sa87,DL,SW,Ha88,Ha89,AB12,AB13,Br17,AY}). 
Indeed, our method is elementary, as
general as possible in the sense that it applies to arbitrary multivariate algebraic power series
over arbitrary fields of characteristic~$p$, and it provides much sharper quantitative estimates. 
The methods used to date are briefly discussed in  \cref{subsec: comparison}.

%%%%%%%%%%%%%%%%%%
\subsection{Statement of our main result}\label{sec: mainresult} 

The ``complexity'' of an algebraic power series $f\in k[[\bt]]$ is classically measured by its
\emph{degree} and its \emph{height}.
The degree of $f$ is the minimal degree in~$y$ of a nonzero polynomial $A(\bt,y)\in k[\bt,y]$ such
that $A(\bt,f)=0$. It is also equal to $[k(\bt)(f):k(\bt)]$, the degree of the field extension
$k(\bt)(f)$ of~$k(\bt)$.
For the height, we have two natural choices, since we can consider either the \emph{partial
height} or the \emph{total height}. We say that $f$ has partial height $\bh =(h_1,\ldots,h_n)$
if, for all $i$, $h_i$ is the minimal degree in $t_i$ of a nonzero polynomial $A(\bt,y)\in
k[\bt,y]$ such that $A(\bt,f)=0$, while the total height of~$f$ is the minimal total degree in
$\bt$ of such polynomials $A(\bt,y)$.
In fact, a finer way to measure the complexity of multivariate polynomials, and hence of algebraic
power series, is to consider their Newton polytopes. We recall that the \emph{Newton polytope} (or,
\emph{Newton polyhedron}) $\NP(A)$ of a multivariate polynomial
 \[A  \coloneqq \sum_{\substack{\bi \in \NN^n \\ j \in \NN}}
  a_{\bi,j} \bt^{\bi} y^j \in \bk[\bt,y]\]
is defined as the convex hull in $\RR^{n+1}$ of the tuples $(\bi,j)$ with $a_{\bi,j} \neq 0$.
An important result is that, for a generic polynomial $A$, the number of integer points in the
interior of $\NP(A)$ is equal to the (geometric) genus of the hypersurface associated with $A$.
This has been proved by Baker for plane curves~\cite{Baker1893}, by Hodge for
surfaces~\cite{Hodge29} and by Khovanskii for arbitrary hypersurfaces~\cite{Khovanskii78}. 
As a result, \cref{thm: main} has a geometric flavor, as does the result obtained by
Bridy~\cite{Br17} in the case where $k$ is a finite field and $n=1$ (cf.\ \cref{sec: christol} for
more details).

Keeping the previous notation, our main result reads as follows. 

\begin{theorem} \label{thm: main}
Let $\bk$ be a perfect field of characteristic $p$.
Let $A(\bt,y)$ be a nonzero polynomial in $\bk[\bt,y]$ and let $f\in \bk[[\bt]]$ satisfy the
algebraic relation $A(\bt,f)=0$.
Set
\[ C  \coloneqq \NP(A)+ (-1,0]^{n+1} \,. \]
Then there exists a $\bk$-vector space $W\subset K$ of dimension at most
\begin{equation*}\label{eq: NP} \Card(C \cap \NN^{n+1})
\end{equation*}
that contains $f$ and that is closed under the action of $\Omega_n$.
\end{theorem}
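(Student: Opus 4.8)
The plan is to build $W$ by hand as a space of algebraic fractions with a single common denominator, so that $f$ behaves like the rational case recalled above.

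First I would reduce to the case where $A$ is square-free: since $\NP(A)$ contains a lattice translate of $\NP(\operatorname{rad} A)$, the quantity $\Card(C\cap\NN^{n+1})$ only grows when $A$ is replaced by its radical (translating a subset of $\RR^{n+1}$ by a vector of $\NN^{n+1}$ never decreases the number of lattice points it contains in $\NN^{n+1}$). Once $A$ is square-free, a power series root $f$ is automatically separable over $k(\bt)$: an inseparable irreducible factor has the form $Q_1(\bt, y^{p^e})$ with $e\ge 1$, and since $p^e$-th powers in $k[[\bt]]$ lie in $k[[\bt^{p^e}]]$, a root of it in $k[[\bt]]$ would force $Q_1\in k[\bt^{p^e}, z]$ and hence make the factor a $p$-th power — impossible. (The purely inseparable $f$, which then turns out to be rational, is handled by the rational case.) So $\delta \coloneqq A_y(\bt, f)\neq 0$, and I take
\[
	W \ \coloneqq\ \bigl\{\ B(\bt, f)/\delta \ :\ B\in k[\bt, y],\ \operatorname{supp}(B)\subseteq C\cap\NN^{n+1}\ \bigr\}\ \subset\ K ,
\]
whence $\dim_k W\le\Card(C\cap\NN^{n+1})$ by construction. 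That $f\in W$ follows from the Euler-type identity $f\,\delta = \sum_j j\,a_j(\bt)f^j = \sum_j(j-d)a_j(\bt)f^j$ (the second equality because $A(\bt, f)=0$, with $d=\deg_y A$), whose right-hand side equals $B_0(\bt, f)$ for $B_0\coloneqq\sum_{0\le j<d}(j-d)a_j(\bt)y^j$, and $\operatorname{supp}(B_0)\subseteq\operatorname{supp}(A)\subseteq\NP(A)\subseteq C$.

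The core is to show $S_{\br}(W)\subseteq W$ for each $\br$. The mechanism is cleanest if one rewrites an element of $W$ as a residue, $B(\bt, f)/\delta = \operatorname{Res}_{y=f}\bigl(B(\bt, y)\,dy/A(\bt, y)\bigr)$, and relates $S_{\br}$ applied to this residue to the section operators $S_{(\br, s)}$, $0\le s<p$, for the $n{+}1$ variables $(\bt, y)$, applied to the \emph{rational} function $B/A$. Using the standard identity $S_{\bullet}(P/Q^p)=S_{\bullet}(P)/Q$ — after multiplying numerator and denominator by a suitable polynomial to make the denominator a $p$-th power — together with $A(\bt, f)=0$, one obtains an explicit formula $S_{\br}\bigl(\operatorname{Res}_{y=f}(B\,dy/A)\bigr) = \operatorname{Res}_{y=f}(B'\,dy/A)$ with $B'$ built from the sections of that twisted numerator and from $f$. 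This is exactly the computation behind the easy rational case, now carried out on the hypersurface $A=0$.

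The main obstacle — and the whole quantitative point — is to check that $\operatorname{supp}(B')$ again lies in $C\cap\NN^{n+1}$. One has $C\cap\NN^{n+1}=\{\lfloor x\rfloor : x\in\NP(A)\}$ with the floor taken coordinatewise, and a section operator morally divides exponents by $p$ and rounds down; so the content is that the set $\{\lfloor x\rfloor : x\in\NP(A)\}$ is stable under the exponent transformations produced by $S_{\br}$ after the auxiliary twisting and the residue. Tracking how the twisting polynomial and the simple pole $dy/(y-f)$ contribute is exactly what forces — and is absorbed by — the enlargement $C=\NP(A)+(-1,0]^{n+1}$, the $(-1,0]$ in each coordinate being the rounding-down slack. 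Making this estimate sharp, and uniformly in $n$, is the delicate part and is what yields bounds better than the Newton-polytope-blind ones of earlier methods; once it is in place, $W$ contains $f$, is stable under $\Omega_n$, and has dimension $\le\Card(C\cap\NN^{n+1})$, as required.
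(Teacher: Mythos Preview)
Your outline is correct and follows essentially the same route as the paper: define $W=\{B(\bt,f)/A_y(\bt,f):\NP(B)\subset C\}$, interpret each element as a residue $\res_{y=f}(B/A)$ via Furstenberg's identity, and prove $\Omega_n$-stability by setting $B'=S_{(\br,p-1)}(B\,A^{p-1})$ (the $(n{+}1)$-variable section of the twisted numerator) and checking $\NP(B')\subset C$ from $\NP(BA^{p-1})\subset p\cdot\NP(A)+(-1,0]^{n+1}$; the paper works with the minimal polynomial $E$ rather than a square-free $A$ and packages the residue/section commutation through an auxiliary variable $T=y-f$ and the shifted basis $\{\bt^{\br}(f{+}T)^s\}$ of $K((T))$, but on polynomials this agrees with your standard $(n{+}1)$-variable sections. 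One small correction: the resulting $B'$ is exactly the polynomial $S_{(\br,p-1)}(BA^{p-1})$ with no residual dependence on~$f$, so your parenthetical ``and from $f$'' can be dropped.
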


\begin{remark}\label{rem: thm}
The plus sign in the definition of $C$ refers to the Minkowski sum. In Theorems~\ref{thm:B} and
\ref{thm: main}, the field $\bk$ must be perfect for the section operators to be well-defined, but
this is not a real limitation. Indeed, replacing an arbitrary field of characteristic $p$ by its
perfect closure does not affect our results (cf.\ \cref{sec: descente}). \end{remark}

When measuring complexity of algebraic power series in terms of degree and height, our main
result can be translated as follows.

\begin{corollary}\label{cor: main}
We keep the notation of \cref{thm: main}. Let us further assume that $\deg_y(A)\leq d$,
$\deg_{\bt}(A)\leq h$, and $\deg_{t_i}(A)\leq h_i$ for all $1\leq i\leq n$.
Then, there exists a $\bk$-vector space $W\subset K$ of dimension at most 
\begin{equation*}
N \coloneqq (d+1) \cdot \min \left\{  \prod_{i=1}^n(h_i+1) , \binom{n+h}{n} \right\}
\end{equation*}
that contains $f$ and that is closed under the action of $\Omega_n$.  
\end{corollary}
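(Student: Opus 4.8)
The plan is to deduce \cref{cor: main} from \cref{thm: main} by an elementary counting argument that bounds $\Card(C\cap\NN^{n+1})$ in terms of the degree data of $A$. First, reinterpret this cardinality geometrically. As $A$ is a polynomial, $\NP(A)$ is a bounded subset of $\RR_{\ge0}^{n+1}$, and the half-open unit cubes $\bm+[0,1)^{n+1}$ with $\bm\in\ZZ^{n+1}$ tile $\RR^{n+1}$. For $\bm=(\bu,v)\in\NN^{n+1}$, the condition $\bm\in C=\NP(A)+(-1,0]^{n+1}$ means that $\bm-\varepsilon\in\NP(A)$ for some $\varepsilon\in(-1,0]^{n+1}$, that is, that $\NP(A)$ meets the half-open unit cube $\bm+[0,1)^{n+1}$. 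Hence
\[
\Card(C\cap\NN^{n+1})=\Card\bigl\{\bm\in\NN^{n+1}:(\bm+[0,1)^{n+1})\cap\NP(A)\neq\emptyset\bigr\}\,,
\]
which is finite because $C$ is bounded.

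It then remains to bound the right-hand side, using the degree hypotheses on $A$ in two complementary ways. From $\deg_{t_i}(A)\le h_i$ for all $i$ and $\deg_y(A)\le d$ we get $\NP(A)\subseteq\prod_{i=1}^n[0,h_i]\times[0,d]$; if a cube $\bm+[0,1)^{n+1}$ with $\bm=(\bu,v)$ meets this box, then $u_i\in\{0,\dots,h_i\}$ for every $i$ and $v\in\{0,\dots,d\}$, so there are at most $\bigl(\prod_{i=1}^n(h_i+1)\bigr)(d+1)$ such cubes. From $\deg_{\bt}(A)\le h$ and $\deg_y(A)\le d$ we get instead $\NP(A)\subseteq\{\bi\in\RR_{\ge0}^n:i_1+\cdots+i_n\le h\}\times[0,d]$; a cube $\bm+[0,1)^{n+1}$ meeting this prism forces $u_1+\cdots+u_n\le h$ and $v\in\{0,\dots,d\}$, and since the number of $\bu\in\NN^n$ with $u_1+\cdots+u_n\le h$ equals $\binom{n+h}{n}$, there are at most $\binom{n+h}{n}(d+1)$ such cubes. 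Taking the smaller of the two bounds yields $\Card(C\cap\NN^{n+1})\le N$, and \cref{thm: main} then provides a $\bk$-vector space $W\subset K$ of dimension at most $N$ that contains $f$ and is closed under $\Omega_n$.

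There is no real obstacle here, since the statement is an immediate consequence of \cref{thm: main}. The only point requiring a little care is the bookkeeping around the half-open Minkowski summand $(-1,0]^{n+1}$ in the definition of $C$, which is precisely what lets us identify $\Card(C\cap\NN^{n+1})$ with the number of integer-translated unit cubes meeting $\NP(A)$; with this identification in hand, both estimates follow at once from the inclusions of $\NP(A)$ into a coordinate box and into a simplicial prism, and the minimum of the two resulting bounds is exactly $N$.
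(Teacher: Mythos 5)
Your proposal is correct and takes the approach the paper clearly intends: deduce the corollary from \cref{thm: main} by bounding $\Card(C\cap\NN^{n+1})$, using the identification that $\bm\in C$ iff the half-open cube $\bm+[0,1)^{n+1}$ meets $\NP(A)$, together with the two inclusions $\NP(A)\subseteq\prod_i[0,h_i]\times[0,d]$ and $\NP(A)\subseteq\{\bi:i_1+\cdots+i_n\leq h\}\times[0,d]$. The paper states the corollary without writing out this counting (it only restates the numerical bound in \cref{rem: main}), so your argument simply supplies the routine details in the expected way.
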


\begin{remark}
In the rest of the paper, we express our results mostly in terms of degree and height rather than
in terms of Newton polytopes, which leads to somewhat less precise bounds (compare, for instance,
\cref{thm: main} and \cref{cor: main}).
This is the case for \cref{thm: christol_eff,thm: modp,thm: hadamard,thm: HL}. 
The reason for this choice is that we think it could be more meaningful for some readers. However,
there is no difficulty in deducing from our arguments and \cref{thm: main} more precise
results involving Newton polytopes.
\end{remark}

%%%%%%%%%%%%%%%%
\subsection{Motivation}

\cref{thm: main} is motivated by various applications, in particular to the four following
problems:

\medskip
\begin{itemize}

\item[{\rm (i)}] Given an algebraic power series 
\[f(\bt)\coloneqq\sum_{\bi \in\mathbb N^n}a(\bi)\bt^{\bi}\in\mathbb F_q[[\bt]]\,,\] 
find upper bounds for the minimal number of states required for a $q$-automaton in order to
generate the sequence $(a(\bi))_{\bi\in\mathbb N^n}$.

\item[{\rm (ii)}] Given an algebraic power series  
\[ f(\bt)\coloneqq\sum_{\bi \in\mathbb N^n}a(\bi)\bt^{\bi}\in\mathbb Z[[\bt]]\,,\] 
find upper bounds for the degree of algebraicity over $\mathbb F_p(t)$ of the reduction modulo $p$
of the diagonal of $f$, that is
\[
\Delta(f)_{\mid p}  \coloneqq \sum_{i=0}^{\infty} (a(i,\ldots,i)\bmod p) t^i\in\mathbb F_p[[t]]\,.
\]

\item[{\rm (iii)}] Given two multivariate algebraic power series over an arbitrary field of
characteristic $p$, find upper bounds for the degree of algebraicity of their Hadamard product.

\item[{\rm (iv)}] Given an algebraic power series  
\[f(\bt)\coloneqq\sum_{\bi \in\mathbb N^n}a(\bi)\bt^{\bi}\in\mathbb \FF_q[[\bt]]\]
(encoded by its minimal polynomial over $\FF_q(\bt)$ and a large enough enclosure which guarantees
uniqueness) and given a multi-index $\bi \in \NN^n$, find fast algorithms to compute the
coefficient $a(\bi)$. 

\end{itemize}

\medskip

For each of them, our results strongly improve and/or generalize previously known results (cf.\
Sections~\ref{sec: christol}--\ref{sec: algo}). For example, in connection with Problem (ii), we
significantly improve the main upper bound obtained by Adamczewski and Bell \cite{AB13} for the
algebraicity degree of reductions modulo~$p$ of diagonals of algebraic power series with integer
coefficients, thus fully answering a question raised in 1984 by Deligne \cite{De84} (see \cref{thm:
modp}).

%%%%%%%%%%%%%%%
\subsection{Comparison of previous methods}\label{subsec: comparison}

There are essentially three different approaches to prove results in the vein of Theorems~\ref{thm:christol} and
\ref{thm: main}. We briefly recall them together with their main advantages and shortcomings.

\medskip

(1) The most classical approach, initiated in \cite{CKMR} and used in
\cite{Sa86,Sa87,DL,SW,Ha88,Ha89,AB12}, is based on the fact that algebraic power series are roots
of polynomials of a certain type called \emph{Ore polynomials}, \emph{i.e.}\ they satisfy algebraic
equations of the form
\[ a_0f+a_1f^p+\cdots+a_df^{p^d}=0 \,, \]
where $a_0,\ldots,a_d\in k[\bt]$, not all zero. The main advantage of this approach is that it is
elementary and general in the sense that it applies to arbitrary multivariate algebraic power
series over arbitrary fields of positive characteristic. Its main deficiency is that it provides
poor bounds for the dimension of the $k$-vector space $W$, namely bounds of the form $p^A$,
where~$A$ can be made explicit and depends polynomially on the parameters $d$ and $h$ (or $d$ and $h_1,\ldots,h_n$),
that is on the degree and the height of $f$.

\medskip

(2) The second approach is based on the so-called \emph{rationalization process}. This means that
one expresses the algebraic power series $f$ as the diagonal of a rational function with more
variables.
Its main advantage is that it provides bounds on the dimension of the $k$-vector space $W$ that do
not depend on $p$. It is based on Furstenberg's formula \cite[Proposition 2]{Fu67} which is 
elementary.
The main drawback is that Furstenberg's formula only applies to algebraic power series satisfying
some specific polynomial relations. For algebraic power series in one variable, one can overcome
this difficulty by using resultant techniques (see, for example, \cite{AY}). For multivariate power
series, one can use inductively these resultant methods as in \cite{AB13}, but the worst case
scenario leads to really huge bounds.
The same approach is also used in \cite{DL}, where the rationalization process is ensured by a
nonelementary and ineffective argument (cf.\ \cite[Remark~3.1]{AB13}).

\medskip

(3) The last approach consists in using a geometric setting, considering the projective curve (or
hypersurface) associated with the algebraic power series $f$.
It was used by Deligne \cite{De84} in order to reprove Furstenberg's theorem on diagonals, and more
recently by Bridy \cite{Br17} to prove a strong quantitative version of Christol's theorem in
dimension one. The main advantage of the geometric approach is that it seems to provide tight bounds
for the dimension of the $k$-vector space $W$, which is indeed the case in \cite{Br17}.
The main shortcomings of this method are that it is not elementary, since it requires tools from
algebraic geometry, and that it does not seem to work as well in the multivariate setting.

%%%%%%%%%%%%%
\subsection{Organization of the paper}

\cref{sec: main} is devoted to the proof of~\cref{thm: main}. 
In \cref{sec: descente}, we prove a result ensuring that one can consider, without any loss in
our bounds, algebraic power series over arbitrary ground fields of positive characteristic (instead
of perfect fields as assumed in \cref{thm: main}).
This result is used in \cref{sec: diag,sec: hadamard}.  
In~\cref{sec: christol}, we discuss Problem~(i) and prove, in the multivariate setting, results
similar to those obtained by Bridy~\cite{Br17} for univariate algebraic power series. \cref{sec:
diag} is devoted to Problem~(ii). We consider diagonals of algebraic power series in several
variables over arbitrary fields of characteristic~$p$, and obtain a general upper bound for their
algebraicity degree, which significantly improves the main known result in this direction obtained
by Adamczewski and Bell \cite{AB13}.
In \cref{sec: hadamard}, we discuss Problem~(iii). We obtain the first simply exponential bound
(with respect to the characteristic $p$ of the ground field) for the degree of algebraicity of the
Hadamard product of two multivariate algebraic power series. We also prove a similar result for the
Hurwitz and the Lamperti products of two algebraic power series in one variable.
This considerably improves the doubly exponential bounds which follow from~\cite{DL,SW,Ha88} and
that were made explicit by Harase~\cite{Ha89}.
Finally, in \cref{sec: algo}, we consider some algorithmic consequences of \cref{thm: main} to
Problem~(iv). In particular, we show that the $M$-th coefficient of the diagonal of an algebraic
power series $f\in \mathbb{F}_p[[t_1,\ldots,t_n]]$ can be computed using a number of operations in
$\mathbb{F}_p$ which is logarithmic in $M$ and almost linear in $p^{n+1}$. Again, this improves
significantly upon previously known results.

\medskip

\noindent Throughout the paper, we let $\mathbb N \coloneqq \{0,1,\ldots\}$ denote the set of nonnegative integers. 
%%%%%%%%%%%%%%%%%%%%%%%%%%%%
%%%%%%%%%%%%%%%%%%%%%%%%%%%%

\section{Proof of \cref{thm: main}}\label{sec: main}

This section is devoted to the proof of our main result.

%%%%%%%%%%%%%%%%%%%%%%
\subsection{A variant of Furstenberg's formula}

Let $K$ be a field and $T$ be an indeterminate. The residue map $\res$ is defined from the field of 
Laurent series $K((T))$ to $K$ by setting
\[
\res \Bigg(\sum_{n\geq \nu}a_nT^n \Bigg) \coloneqq a_{-1} \,.
\]
Given a polynomial $A \in K[y]$, we let $A_y$ denote its derivative with respect to~$y$. The
following key lemma is a slight reformulation of \cite[Lemma~2.3]{BCCD}, itself inspired by
\cite[Proposition~2]{Fu67}. We provide a proof for the reader's convenience.

\begin{lemma}
\label{lem: furstenberg}
Let $K$ be a field, $f \in K$ and $A(y)\in K[y]$.
Assume that $A(f)=0$ and $A_y(f)\not=0$.  
Then 
\[
\res \left(\frac{P(f+T)}{A(f+T)}\right)=\frac{P(f)}{A_y(f)} \,,
\]
for all $P\in K[y]$. 
\end{lemma}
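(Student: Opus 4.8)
The plan is to prove Lemma~\ref{lem: furstenberg} by a direct computation in the complete field of Laurent series $K((T))$, exploiting that $T \mapsto f+T$ substitutes a uniformizer into the rational function $P(y)/A(y)$ and that the residue picks out the coefficient of $T^{-1}$.

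\medskip

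\textbf{Step 1: Factor out the simple zero of $A$ at $y=f$.} Since $A(f)=0$ and $A_y(f)\neq 0$, the polynomial $A$ has a simple root at $y=f$, so we may write $A(y) = (y-f)\cdot B(y)$ in $K[y]$ with $B(f) = A_y(f) \neq 0$ (differentiate the product and evaluate at $y=f$). Substituting $y = f+T$ gives $A(f+T) = T\cdot B(f+T)$, hence
\[
\frac{P(f+T)}{A(f+T)} \;=\; \frac{1}{T}\cdot\frac{P(f+T)}{B(f+T)} \,.
\]

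\textbf{Step 2: Identify the residue.} Because $B(f) \neq 0$, the rational function $P(y)/B(y)$ is defined at $y=f$, and its image $P(f+T)/B(f+T)$ under $y \mapsto f+T$ lies in the ring $K[[T]]$ of power series (expand $B(f+T)^{-1}$ as a geometric series in the positive-order part $B(f+T)-B(f)$, which is legitimate since $B(f)$ is invertible). Write $P(f+T)/B(f+T) = \sum_{n\geq 0} c_n T^n$. Multiplying by $1/T$ shifts exponents down by one, so the coefficient of $T^{-1}$ in $P(f+T)/A(f+T)$ is exactly $c_0$. Evaluating the power series at $T=0$ gives $c_0 = P(f)/B(f) = P(f)/A_y(f)$, which is the claimed identity.

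\medskip

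There is no serious obstacle here; the argument is a short formal manipulation. The only points requiring a word of care are (a) justifying that $B(f+T)^{-1}$ makes sense in $K[[T]]$ — this is exactly where the hypothesis $A_y(f)\neq 0$ is used, via $B(f)=A_y(f)\in K^\times$ — and (b) noting that all manipulations take place inside $K((T))$, which is an honest field, so that $P(f+T)/A(f+T)$ is a well-defined Laurent series to which $\res$ applies. I would also remark that $\res$ is $K$-linear, so it suffices in principle to treat $P$ a monomial, though the geometric-series argument above handles general $P$ directly and is cleaner.
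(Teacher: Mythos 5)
Your proof is correct and rests on the same idea as the paper's: factor out the simple zero $T=0$ of $A(f+T)$ and observe that the remaining factor is a unit in $K[[T]]$, so the residue is just the constant term of the regular part. The paper reaches this via the logarithmic derivative $\frac{A_y(f+T)}{A(f+T)}=\frac{1}{T}+\frac{Q'(T)}{Q(T)}$, whereas you instead factor $A(y)=(y-f)B(y)$ in $K[y]$ and identify $B(f)=A_y(f)$ directly, which is a slightly more economical route to the same decomposition.
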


\begin{proof}
By assumption, the polynomial $A(f+T)\in K[T]$ has a simple root at $T=0$. There thus exists a
polynomial $Q(T)\in K[T]$ such that
\[
A(f+T)=T\cdot Q(T) \;\;\; \mbox{ and } \;\;\; Q(0)\not=0\,.
\]
Taking the logarithmic derivative with respect to $T$ yields the equality
\[
\frac{A_y(f+T)}{A(f+T)}= \frac{1}{T} + \frac{Q'(T)}{Q(T)} \, \cdot
\]
Setting $g(T) \coloneqq P(f+T)/A_y(f+T)$, we find that $g$ belongs to $K[[T]]$, as $A_y(f+T)$ does
not vanish at $T=0$. Furthermore, one has
\[
\frac{P(f+T)}{A(f+T)}= \frac{g(T)}{T} + \frac{g(T)Q'(T)}{Q(T)} \,\cdot
\]
Similarly, $g(T)Q'(T)/Q(T)$ belongs to $K[[T]]$ since $Q(0)\not=0$. 
It follows that 
\[
\res \left(\frac{P(f+T)}{A(f+T)}\right)= g(0) = \frac{P(f)}{A_y(f)}\, ,
\]
as claimed. 
\end{proof}

%%%%%%%%%%%%%%%%%%%%%%%%
\subsection{Frobenius and section operators}

Let $\bk$ be a perfect field of characteristic~$p$.  
We keep on with the notation introduced in~\cref{sec: intro}. 
We let $\F$ be the Frobenius map. 
We consider indeterminates $t_1,\ldots,t_n$ and write $\bt = (t_1, \ldots, t_n)$.
We set $K_0\coloneqq k(\bt)$, $R \coloneqq k[[\bt]]$ and $K  \coloneqq {\rm Frac}(R)$.  
For every $\br\in \{0,\ldots,p-1\}^n$, the section operator $S_{\br}$ is the semilinear operator from $K$ into itself defined as in~\eqref{eq: section1}.

%%%%%%%
\subsubsection{Section operators on $K((T))$}

Let us consider a new indeterminate~$T$. Then $\F$ extends to an injective homomorphism from
$K((T))$ into itself.
We let $K((T))^{\langle p\rangle}$ denote the image of $K((T))$ by $\F$. As previously, $K((T))$ is
a $K((T))^{\langle p\rangle}$-vector space of dimension $p^{n+1}$, a basis being given by all the
products of the form $\bt^{\br}T^s$ with $\br \in \{0,\ldots,p-1\}^n$ and $0\leq s\leq p-1$.
However, it will be more convenient for our purpose to replace this standard basis by a more
appropriate one (depending on a given $f\in R$). We proceed in the same way as in
\cite[Lemma~2.4]{BCCD}.

\begin{lemma}
\label{lem: basis}
For any $f\in R$, the family 
\[
\mB_f \colonequal \big\{\bt^{\br}(f+T)^s : \br,s\in \{0,\ldots,p-1\}^{n+1}\big\}
\]
is a basis of $K((T))$ as a $K((T))^{\langle p\rangle}$-vector space. 
\end{lemma}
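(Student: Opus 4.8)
The plan is to show that $\mathcal B_f$ is obtained from the standard basis $\{\bt^{\br}T^s : \br,s\in\{0,\ldots,p-1\}^{n+1}\}$ by an invertible change of coordinates over $K((T))^{\langle p\rangle}$. Since both families have cardinality $p^{n+1}$, which is exactly the dimension of $K((T))$ over $K((T))^{\langle p\rangle}$, it suffices to prove that $\mathcal B_f$ spans (equivalently, is linearly independent over the base field). First I would note that the factor $\bt^{\br}$ plays no role in the argument: for a fixed $\br$, multiplication by $\bt^{\br}$ permutes nothing but it does show that the $p^{n+1}$ elements of $\mathcal B_f$ group into $p^n$ blocks indexed by $\br$, each block being $\bt^{\br}$ times $\{(f+T)^s : 0\le s\le p-1\}$; and the standard basis has the same block structure. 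So the problem reduces to the one-indeterminate statement: the family $\{(f+T)^s : 0\le s\le p-1\}$ and the family $\{T^s : 0\le s\le p-1\}$ generate the same $K((T))^{\langle p\rangle}$-submodule of $K((T))$ inside each block.

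Next I would make the reduction precise. Expanding $(f+T)^s = \sum_{j=0}^{s}\binom{s}{j} f^{s-j} T^j$, and observing that $f\in R\subset K$, hence $f^{s-j}\in K \subset$ (not necessarily $K^{\langle p\rangle}$), I realize the coefficients $\binom{s}{j}f^{s-j}$ are elements of $K$, not of $K((T))^{\langle p\rangle}$, so one cannot directly read off a transition matrix with entries in the base field. The fix is to expand instead in the basis $\bt^{\br}T^s$ with $\br,s$ in the prescribed ranges: write each $(f+T)^s$, and more generally each $\bt^{\br'}(f+T)^s$, in terms of the standard basis $\{\bt^{\br}T^{s'}\}$ with $K((T))^{\langle p\rangle}$-coefficients. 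Concretely, decompose $f = \sum_{\bu\in\{0,\ldots,p-1\}^n}\bt^{\bu}\,\mathsf F(f_{\bu})$ using \eqref{eq: basis1}; then each power $f^m$ has a similar decomposition with coefficients in $K^{\langle p\rangle}$, and likewise $\bt^{\br'}(f+T)^s = \sum_{j=0}^s \binom{s}{j}\bt^{\br'} f^{s-j} T^j$ can be rewritten, after pushing the $\bt$-monomials down to their residues mod $p$ (absorbing the carries into the $K((T))^{\langle p\rangle}$-coefficients) and doing the same with $T^j$ (here $j\le s\le p-1$ so no carry occurs in the $T$-variable), as a $K((T))^{\langle p\rangle}$-linear combination of the $\bt^{\br}T^{j}$. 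This exhibits the transition matrix $M$ from $\mathcal B_f$ to the standard basis as a $p^{n+1}\times p^{n+1}$ matrix over $K((T))^{\langle p\rangle}$.

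Then I would argue $M$ is invertible. Order both bases so that the block indexed by $\br$ (respectively $\br'$) comes first in lexicographic order, and within each block order by $s$. The key point is that the "diagonal-in-$s$" part is unitriangular: in the expansion of $\bt^{\br'}(f+T)^s$, the coefficient of $\bt^{\br'}T^s$ is $1$ (the $j=s$ term contributes $\binom{s}{s}=1$ and no $\bt$-carry occurs since $\br'$ is already reduced), while the coefficient of $\bt^{\br'}T^{s'}$ for $s'>s$ is zero (no term of $T$-degree exceeding $s$ appears). Meanwhile the cross-block contributions — coefficients of $\bt^{\br}T^{s'}$ with $\br\ne\br'$ — are whatever they are, but they involve strictly lower powers of $T$ relative to... actually the cleanest route is: pass to the quotient field extension $K((T))/K((T))^{\langle p\rangle}$ and use that the $\br$ index is "free": since $\mathsf F(K((T)))$ consists of $p$-th powers and the $\bt^{\br}$, $T^{s}$ are a genuine basis, the only way $\sum c_{\br',s}\,\bt^{\br'}(f+T)^s = 0$ with $c_{\br',s}\in K((T))^{\langle p\rangle}$ can hold is coefficientwise. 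For that I would substitute $f\mapsto$ a formal variable, or better: observe that $\{(f+T)^s\}_{0\le s\le p-1}$ is, over the field $K((T))^{\langle p\rangle}$, the image under the $K((T))^{\langle p\rangle}$-linear automorphism "shift by $f$" of $\{T^s\}_{0\le s\le p-1}$ — provided this shift is well-defined on the span of the latter, which it is precisely because $\binom{s}{j}f^{s-j}$, after the mod-$p$ reduction of $\bt$-exponents described above, lands in $K((T))^{\langle p\rangle}$ only after multiplying by the appropriate $\bt^{\br}$; so in fact the honest statement is that the $p^{n+1}\times p^{n+1}$ block matrix is block-unitriangular with identity diagonal blocks, hence has determinant $1$ and is invertible.

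The main obstacle I anticipate is exactly this bookkeeping of "carries": making sure that when one expands $\bt^{\br'}f^{s-j}T^j$ into the standard basis $\{\bt^{\br}T^{s'}\}$, every coefficient that appears genuinely lies in $K((T))^{\langle p\rangle}$, and identifying precisely which standard-basis vectors receive a nonzero contribution, so that the transition matrix is visibly triangular (up to the harmless $\br$-block structure) with unit diagonal. Once that combinatorial/notational point is nailed down — and it is the same computation already carried out in \cite[Lemma~2.4]{BCCD} for $n=1$, here done coordinatewise in each $t_i$ — the invertibility, and hence the basis property, follows immediately from the equality of cardinalities with $\dim_{K((T))^{\langle p\rangle}} K((T)) = p^{n+1}$.
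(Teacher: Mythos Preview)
Your approach is the same as the paper's: exhibit a unitriangular transition from $\{T^s\}$ to $\{(f+T)^s\}$, multiply through by $\bt^{\br}$, and conclude by cardinality. The paper simply displays the matrix $\big(\binom{i}{j}f^{\,i-j}\big)_{0\le i,j\le p-1}$, inverts it, and asserts that each $\bt^{\br}T^s$ is thereby a combination of the $\bt^{\br}(f+T)^i$, without remarking on where the coefficients live. Your concern that those coefficients lie only in $K$, not in $K((T))^{\langle p\rangle}$, is valid, but the carry-tracking you propose is heavier than needed: the one-line resolution is to factor the extension as $K((T))^{\langle p\rangle}=K^{\langle p\rangle}((T^p))\subset K((T^p))\subset K((T))$, observe that $\{\bt^{\br}\}$ is a basis of the first step while the unitriangular matrix (entries in $K\subset K((T^p))$) shows that $\{(f+T)^s\}$ is a basis of the second, and invoke the tower law. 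Your direct $p^{n+1}\times p^{n+1}$ computation also works, but with one slip: to get block-unitriangularity with identity diagonal blocks you must order the bases primarily by $s$ (the off-diagonal contributions from $j<s$ can land at an arbitrary $\bt^{\bu}$), not primarily by $\br$ as you wrote.
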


\begin{proof}
First, we observe that $\mB_f$ is a generating family. Indeed, we can obtain $\bt^{\br}T^s$ as a
linear combination of $\bt^{\br}(f+T)^i$, $0\leq i\leq s$.
To see this, it is enough to invert the matrix
\[
\left(
\begin{matrix}
1&0&\cdots &\cdots& \cdots& 0 \\
f&1&0 &\cdots& \cdots& 0\\
f^2&2f&1&0&\cdots&   0\\
\vdots &\vdots& \vdots&\ddots&\vdots& \vdots\\
\vdots &\vdots& \vdots&\vdots&\ddots& \vdots\\
f^s&sf^{s-1} &\frac{s(s-1)}{2} f^{s-2}&\cdots&sf& 1
\end{matrix}\right).
\]
Since $\mB_f$ has the same cardinality as the basis $\{\bt^{\br}T^s : \br,s \in
\{0,\ldots,p{-}1\}^{n+1}\}$, it is also a basis of $K((T))$. 
\end{proof}

It follows that, given $f\in R$,  every $x\in K((T))$ has a unique expansion of the form 
\begin{equation}
\label{eq: basis2}
x= \sum_{\br \in \{0,\ldots,p-1\}^n} \bt^{\br} \sum_{s=0}^{p-1} (f+T)^s x_{f,\br,s}\,,
\end{equation}
with $x_{f,\br,s} \in K((T))^{\langle p\rangle}$.
For every $\br\in \{0,\ldots,p-1\}^n$ and $s\in\{0,\ldots,p-1\}$, we define the \emph{section 
operator} $S_{f,\br,s}$, from $K((T))$ into itself, by 
\begin{equation}
\label{eq: section2}
S_{f,\br,s}(x)\coloneqq \F^{-1}(x_{f,\br,s})\,.
\end{equation} 
We observe that 
$S_{f,\br,s}(xy^p) = S_{f,\br,s}(x\F(y)) = S_{f,\br,s}(x)y$, 
for all $x,y\in K((T))$, all $\br\in \{0,\ldots,p-1\}^n$, and all $s\in\{0,\ldots,p-1\}$.

%%%%%%%%%%%%%%%%%%%%%%%%%%
\subsubsection{Section operators and residues}

The next result is the key observation of our proof.
Roughly speaking, it shows some compatibility between taking residues
at $f(t)$ and residues at $0$.
It corresponds to a multivariate extension of \cite[Proposition 2.5]{BCCD}.

\begin{proposition}
\label{prop: res}
For any $f\in R$ and $\br \in \{0, \ldots, p{-}1\}^n$, 
the following commutation relation holds over~$K((T))$:
\[S_{\br} \circ \res =  \res \circ S_{f,\br,p-1}\, .\]
\end{proposition}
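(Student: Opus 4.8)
The plan is to compare the two expansions of an element $x \in K((T))$: the one coming from the standard basis $\{\bt^{\br} T^s\}$ of $K((T))$ over $K((T))^{\langle p \rangle}$, which computes $S_{\br}(\res(x))$, and the one coming from the basis $\mB_f = \{\bt^{\br}(f+T)^s\}$ of \cref{lem: basis}, which computes $S_{f,\br,p-1}(x)$. Both are $p^{n+1}$-dimensional expansions over the same field $K((T))^{\langle p \rangle}$; the two bases differ only in the $T$-variable, where $\{1, T, \dots, T^{p-1}\}$ is replaced by $\{1, f+T, \dots, (f+T)^{p-1}\}$ via the unipotent change-of-basis matrix displayed in the proof of \cref{lem: basis}. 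So the content of the proposition is purely about how taking the residue (the coefficient of $T^{-1}$) interacts with these two presentations.

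First I would fix $\br$ and, by semilinearity of every operator in sight, reduce to understanding the $T$-part. Write $x = \sum_{\br} \bt^{\br} g_{\br}(T)$ with $g_{\br} \in K((T))^{\langle p \rangle}[T]$-span of $\{T^0,\dots,T^{p-1}\}$ over $K((T))^{\langle p\rangle}$... more precisely, group the standard expansion of $x$ as $x = \sum_{\br \in \{0,\dots,p-1\}^n} \bt^{\br}\, y_{\br}$ where $y_{\br} = \sum_{s=0}^{p-1} T^s\, \F(u_{\br,s})$ lies in the $K((T))^{\langle p\rangle}$-span of $1, T, \ldots, T^{p-1}$, so that $S_{\br}(\res(x)) = \F^{-1}(\res(x)_{\br})$, and $\res(x) = \sum_{\br}\bt^{\br}\res(y_{\br})$ is the honest section-expansion of $\res(x) \in K$. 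The point is that for a single $\br$-slice, both sides of the claimed identity are obtained by applying a $K((T))^{\langle p\rangle}$-linear functional to $y_{\br}$, and I only need to check these two functionals agree.

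The key computation, then, is: for $y \in K((T))$ lying in the $K((T))^{\langle p\rangle}$-span of $\{T^s : 0 \le s \le p-1\}$, show that $\F^{-1}$ of the coefficient of $(f+T)^{p-1}$ in the $\mB_f$-expansion of $y$ (in the $T$-variable) equals $\F^{-1}$ of the coefficient of $T^{p-1}$ of $\res(y)$ — wait, more carefully: $S_{f,\br,p-1}(x)$ picks out $x_{f,\br,p-1}$, the coefficient of $\bt^{\br}(f+T)^{p-1}$, then applies $\F^{-1}$; I must show $\res$ of that coefficient (in the appropriate sense) matches $S_{\br}(\res(x))$. Concretely: if $y = \sum_{s=0}^{p-1}(f+T)^s v_s$ with $v_s \in K((T))^{\langle p\rangle}$, then since $f \in R \subset K \subset K((T))^{\langle p\rangle}$-algebra... no: $f \notin K((T))^{\langle p\rangle}$ in general. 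Here is the actual mechanism: expand each $(f+T)^s = \sum_{j} \binom{s}{j} f^{s-j} T^j$ and collect; the coefficient $x_{f,\br,p-1}$ of $(f+T)^{p-1}$ is obtained by the inverse unipotent matrix, and because that matrix is unipotent lower-triangular with the "degree" in $T$ increasing down the rows, the coefficient of the top basis element $(f+T)^{p-1}$ equals exactly the coefficient of $T^{p-1}$ — the change of basis does not touch the top slot. Taking residues, $\res((f+T)^s v_s)$ for $s \le p-1$ involves $\res(T^j v_s)$ which is the coefficient of $T^{-1-j}$ in $v_s$; one checks the residue commutes appropriately because $f$ has no negative powers of $T$. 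I expect the heart of the matter to reduce to the identity $\res(z \cdot (f+T)^{p-1}) = \F(\text{something})$ versus the standard section, unwound via the explicit binomial expansion.

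The step I expect to be the main obstacle is keeping the semilinear ($\F^{-1}$-twisted) bookkeeping honest while passing between the two bases — in particular, verifying cleanly that the "$(f+T)^{p-1}$-coefficient" and the "$T^{p-1}$-coefficient after residue" really are forced to coincide, which hinges on the fact that $\res$ kills all of $K[[T]]$ and that $f \in R$ contributes only nonnegative powers of $T$, so that in the expansion of $(f+T)^s$ for $s < p-1$ no $T^{p-1}$-or-higher behavior leaks into the residue in a way that would spoil the match. Once that is pinned down, the proposition should follow by assembling the $\br$-slices and invoking semilinearity; I would likely model the write-up closely on the proof of \cite[Proposition~2.5]{BCCD} in the univariate case, since the $\bt^{\br}$-directions are inert and only the $T$-direction carries the new content.
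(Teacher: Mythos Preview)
Your approach is correct and matches the paper's: the key point in both is that only the $s=p{-}1$ term of the $\mB_f$-expansion survives under $\res$, since elements of $K((T))^{\langle p\rangle}$ carry only $T$-exponents divisible by~$p$ (this, rather than ``$\res$ kills $K[[T]]$'' or ``$f$ has nonnegative $T$-powers'', is the mechanism you are reaching for). The paper's write-up is more direct than yours---it applies $\res$ to the $\mB_f$-expansion and reads off the section decomposition of $\res(x)$ in three lines, without the detour through the standard basis $\{\bt^{\br}T^s\}$---but your closing remark about modeling on \cite[Proposition~2.5]{BCCD} is exactly what the paper does.
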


\begin{proof}
Let $x\in K((T))$.  By \eqref{eq: basis2}, we have  
\[
x= \sum_{\br \in \{0,\ldots,p-1\}^n} \bt^{\br} 
	\sum_{s=0}^{p-1} (f+T)^s \, \F(S_{f,\br,s}(x)) \,.
\]
Hence 
\begin{eqnarray*}
\res (x)  &=& \sum_{\br \in \{0,\ldots,p-1\}^n}\bt^{\br} \, 
	\sum_{s=0}^{p-1} \res \Big( (f+T)^s \, \F(S_{f,\br,s}(x)) \Big)\\
&=& \sum_{\br \in \{0,\ldots,p-1\}^n}\bt^{\br} \, \F\big(\res \left( S_{f,\br,p-1}(x)\big) \right) \,.
\end{eqnarray*}
By \eqref{eq: basis1} and \eqref{eq: section1}, we obtain 
\[
S_{\br} \circ \res (x)= \res \circ  S_{f,\br,p-1}(x) \,,
\]
as wanted. 
\end{proof}

%%%%%%%%%%%%%%%%%%%%%%%%%
\subsection{Proof of \cref{thm: main}}

We keep on with the previous notation and the notation of \cref{sec: intro}.
Let $E(\bt,y)\in \bk[\bt,y]$ denote the minimal polynomial of $f$
over $k(t)$, normalized so that its coefficients are globally coprime.

\begin{lemma}
\label{lem:separable}
The power series $f$ is a simple root of $E(\bt, y)$.
\end{lemma}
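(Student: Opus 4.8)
The plan is to show that $E(\bt,y)$, the normalized minimal polynomial of $f$ over $k(\bt)$, is separable as a polynomial in $y$, from which the claim that $f$ is a simple root follows immediately since $E(\bt,f)=0$ means $E_y(\bt,f)\ne 0$. The key point is that $\bk$ is perfect, so $k(\bt)$, while not perfect, has the property that its algebraic extensions are governed by separability considerations that interact well with the Frobenius. The standard fact I would invoke is that over any field, the minimal polynomial of an element is irreducible, and an irreducible polynomial over a field $L$ fails to be separable only if it lies in $L[y^p]$, i.e.\ $E(\bt,y) = G(\bt, y^p)$ for some $G \in k(\bt)[y]$.

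So suppose for contradiction that $E(\bt,y) = G(\bt,y^p)$. After clearing denominators we may assume $G$ has polynomial coefficients in $\bt$, and since $\bk$ is perfect, every coefficient $c(\bt) \in \bk[\bt]$ of $G$ can be written as $c(\bt) = d(\bt^p)^{\,}$... more precisely, using that $\F$ is surjective on $\bk$, each monomial $a\,\bt^{\bi}$ appearing in a coefficient can be massaged: write $a = b^p$ with $b \in \bk$, and decompose $\bt^{\bi}$ along the basis $\{\bt^{\br} : \br \in \{0,\dots,p-1\}^n\}$ of $K$ over $K^{\langle p\rangle}$. Collecting terms, one sees that $E(\bt,y)$ would then decompose as $\sum_{\br} \bt^{\br}\, H_{\br}(\bt,y)^p$ for suitable polynomials $H_{\br} \in \bk[\bt,y]$. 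Evaluating at $y = f$ and using that $E(\bt,f)=0$, together with the linear independence of the $\bt^{\br}$ over $K^{\langle p\rangle}$ (Equality \eqref{eq: basis1}), forces each $H_{\br}(\bt,f)^p = 0$, hence $H_{\br}(\bt,f)=0$ for all $\br$. At least one $H_{\br}$ is nonzero and has strictly smaller degree in $y$ than $E$, contradicting the minimality of $E$.

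The main obstacle, and the step that deserves the most care, is the bookkeeping in the decomposition $E(\bt,y) = \sum_{\br}\bt^{\br} H_{\br}(\bt,y)^p$: one must check that the $H_{\br}$ genuinely have polynomial (not merely rational) coefficients — which is where perfectness of $\bk$ is used to extract $p$-th roots of scalars — and that the degree in $y$ strictly drops, which is automatic since $H_{\br}(\bt,y)^p$ contributes degree $p\deg_y H_{\br}$ and $E = G(\bt,y^p)$ has $\deg_y E = p\deg_y G$, so $\deg_y H_{\br} \le \deg_y G < \deg_y E$ as long as $\deg_y E > 0$ (the case $\deg_y E = 0$ being excluded since $f$ is a genuine power series root). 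An alternative, slicker route that avoids the explicit decomposition is to argue directly: if $E_y(\bt,f) = 0$, then since $f$ is also a root of $E_y(\bt,y)$, which has smaller $y$-degree, minimality forces $E_y \equiv 0$, whence $E \in k(\bt)[y^p]$; then apply the Frobenius argument above to the single polynomial $E$. I would present this cleaner version, reserving the coefficient-extraction lemma as the one technical input.
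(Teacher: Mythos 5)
Your proof is correct and follows essentially the same route as the paper: reduce to showing $E \notin k(\bt)[y^p]$ by the standard irreducibility/separability criterion, then use perfectness of $k$ and the decomposition of $K$ over $K^{\langle p\rangle}$ to extract from $E(\bt,y)=G(\bt,y^p)$ a nonzero polynomial of strictly smaller $y$-degree annihilating $f$, contradicting minimality. The paper phrases this step by applying the section operator $S_{\br}$ directly to the identity $F(\bt,f^p)=0$ and invoking semilinearity, whereas you unwind the same computation at the level of polynomials by writing $E=\sum_{\br}\bt^{\br}H_{\br}(\bt,y)^p$ with $H_{\br}=\sum_j S_{\br}(a_j)\,y^j$ and then using linear independence of the $\bt^{\br}$; the two formulations are interchangeable and produce the same contradiction.
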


\begin{proof}
It is enough to show that $E(\bt, y)$ is separable with respect to
the variable~$y$.
Since it is defined as a minimal polynomial, this further reduces
to prove that $E(\bt, y)$ is not of the form $F(\bt, y^p)$ for some
polynomial $F(\bt,z)\in \bk[\bt,z]$.
We assume by contradiction that this occurs, and we write
\[
F(\bt, z) = a_0(\bt) + a_1(\bt) z + \cdots + a_m(\bt) z^m
\]
with $a_i(\bt) \in \bk[\bt]$ and $a_m(\bt) \neq 0$.
Let $\br \in \{0, \ldots, p{-}1\}^n$. Applying the section operator 
$S_{\br}$ to the identity $F(\bt, f^p) = 0$, we obtain
\[
S_{\br}\big(a_0(\bt)\big) + 
S_{\br}\big(a_1(\bt)\big) f + \cdots + 
S_{\br}\big(a_m(\bt)\big) f^m = 0\,.
\]
Moreover, since $a_m(\bt)$ is nonzero, there must exist $\br$ for
which $S_{\br}(a_m(\bt))$ does not vanish either.
For this particular $\br$, we then get a polynomial annihilating
$f$ with $y$-degree less than the $y$-degree of~$E$. This
contradicts the minimality of~$E$.
\end{proof}

Let $E_y$ be the partial derivative of
$E$ with respect to $y$. \cref{lem:separable} ensures that
$E_y(\bt,f)\not=0$.
Besides, given that $A$ annihilates $f$, it must be a multiple of $E$, \emph{i.e.}\ we can write $A
= E \cdot F$ for some polynomial $F\in \bk[\bt,y]$.
Let $J$ be the interval $(-1,0]$ and set $C' \colonequal \NP(E) + J^{n+1}$.

We claim that the $\bk$-vector space 
\[
W \coloneqq \left\{ \frac{P(\bt,f)}{E_y(\bt,f)} : 
P\in \bk[\bt,y], \; \NP(P) \subset C' \right\} \subset K 
\]
contains $f$ and is invariant under the action of $\Omega_n$.
The fact that $f \in W$ follows from the observation that $\NP(y E_y) \subset \NP(E) \subset C'$.
We now consider a tuple $\br \in \{0, 1, \ldots, p{-}1\}^n$ together with a polynomial $P \in
\bk[\bt,y]$ whose Newton polytope is a subset of $C'$.
We set $R \colonequal P \cdot E^{p-1}$ and let $Q \in \bk[\bt,y]$ be defined by 
\begin{equation}
\label{eq:formulaQ}
Q(\bt,f+T)  \coloneqq S_{f,\br,p-1}(R(\bt,f+T))\in K \,.
\end{equation}
Combining~\cref{lem: furstenberg} and \cref{prop: res}, we obtain:
\begin{eqnarray} \label{Sr_P_Q}
S_{\br}\left( \frac{P(\bt,f)}{E_y(\bt,f)}\right)& =& 
	S_{\br} \circ \res  \left( \frac{P(\bt, f+T)}{E(\bt,f+T)}\right)\\\nonumber
&=&  \res  \circ  S_{f,\br,p-1} \left( \frac{P(\bt, f+T)}{E(\bt,f+T)}\right)\\ \nonumber
&=& \res   \left( \frac{Q(\bt, f+T)}{E(\bt,f+T)}\right)\\\nonumber
&=& \frac{Q(\bt,f)}{E_y(\bt,f)} \,\cdot
\end{eqnarray}

To establish our claim, it just remains to prove that $\NP(Q) \subset C'$.
We recall the following standard fact about Newton polytopes. The formation of Newton polytopes is
compatible with products: given $A, B \in \bk[\bt,y]$, we have the relation
\[\NP(AB) = \NP(A) + \NP(B).\]
From this property, we derive that 
\[\NP(R) \subset (p{-}1){\cdot}\NP(E) + C' = p{\cdot}\NP(E) + J^{n+1}.\]
Let $(\bi,j)$ be a tuple of exponents that belongs to the support of $Q$, \emph{i.e.}\ for which
the coefficient in $Q$ in front of $\bt^{\bi} y^j$ is nonzero.
It follows from the definition of $S_{f,\br,p-1}$ that $(p\bi + \br, pj + p-1)$ must lie in 
$\NP(R)$. 
Dividing by $p$ and writing $I \coloneqq (-\frac 1 p, 0]$, we obtain that 
\[\textstyle
\left(\bi + \frac 1 p \br, \, j + \frac{p-1} p\right) \in \NP(E) + 
I^{n+1}\,, \]
so that 
\[\textstyle
(\bi, j) \in \NP(E) + I^{n+1} + 
\left\{\left(-\frac 1 p \br, -\frac{p-1} p\right)\right\}
\subset \NP(E) + J^{n+1} = C'\,.\]
Finally, we conclude that $\NP(Q) \subset C'$, as wanted.

Clearly $W$ is spanned by the fractions of the form 
$\bt^{\bi} f^j/E_y(\bt,f)$ with $(\bi, j) \in C' \cap \NN^{n+1}$.
Hence its dimension is upper bounded by the cardinality of this set. 
We observe moreover that $C = \NP(F) + C'$. Since $F$ is nonzero, its Newton polytope $\NP(F)$ meets
$\NN^{n+1}$. Hence $C$ contains a translate of $C'$ by an element with nonnegative integral
coefficients.
Consequently the cardinality of $C\cap \NN^{n+1}$ is at least that of $C'\cap \NN^{n+1}$, and we
conclude that \[\dim_{\bk} W \leq \Card(C'\cap \NN^{n+1})\leq \Card(C\cap \NN^{n+1})\,,\] as wanted.

\begin{remark}
\label{rem: main}
The proof above actually implies the following statement, which is a little more precise than
\cref{thm: main} and can be useful is some cases.
For a nonnegative integer $m$, let $J_m$ be the interval $(-1,\, -1{+}p^{-m}]$ and define:
\begin{align*}
C'_m & \coloneqq \NP(E) + (J_0^n \times J_m) \,, \\
W_m & \coloneqq \left\{ \frac{P(\bt,f)}{E_y(\bt,f)} : 
P\in \bk[\bt,y], \; \NP(P) \subset C'_m \right\} \,.
\end{align*}
The $W_m$'s form a nonincreasing sequence of $\bk$-vector spaces and the action of $\Omega_n$ sends
$W_m$ to $W_{m+1}$.
In particular, it stabilizes the intersection of the $W_m$'s.
However, it is not true that $f$ belongs to $W_m$ for all $m$: in full generality, it only lies 
in~$W_0$.

If we set $\Omega_n^{+}{\cdot} f \coloneqq \{ S_{\br_1}\circ\cdots \circ S_{\br_t}(f) : t\geq 1\}$,
we obtain that $\Omega_n{\cdot} f= \{f \}\cup \Omega_n^{+}{\cdot} f$, while $\Omega_n^{+}{\cdot}f$
is contained in $W_1$, which can be strictly smaller than $W_0$.
For example, if we assume that $\deg_y(A)\leq d$, $\deg_{\bt}(A)\leq h$, and $\deg_{t_i}(A)\leq
h_i$ for all $1\leq i\leq n$, then
\[\dim_k W_0 \leq (d+1)\cdot \min \left\{  \prod_{i=1}^n(h_i+1), \binom{n+h}{n} \right\} \,, \]  
whereas 
\[\dim_k  W_1 \leq d\cdot \min \left\{ \prod_{i=1}^n(h_i+1), \binom{n+h}{n} \right\}\,.\]
\end{remark}

%%%%%%%%%%%%%%%%%%%%%%%%%%%%%%%%%%%%%%%%
\section{From perfect to arbitrary
fields of positive characteristic}\label{sec: descente}

In \cref{thm: main}, the ground field $\bk$ must be perfect for the section operators to be
well-defined. If $k$ is not perfect, one can always replace $k$ by its perfect closure and then
apply \cref{thm: main}. In this section, we prove a result which ensures that passing to the
perfect closure does not affect the bounds we obtain in \cref{sec: diag,sec: hadamard}.

Recall that if $k$ is an arbitrary field of characteristic~$p$, then adjoining to~$k$ all the
$p^r$-th roots ($r \geq 1$) of all the elements of~$k$ yields a perfect field; it is called the
perfect closure of $k$ and we will denote it by~$k_p$.

\begin{proposition}\label{prop: descente}
Let $k$ be an arbitrary field of characteristic $p$ and let $k_p$ be its perfect closure.
Let $f\in k[[\bt]]$ be algebraic over $k(\bt)$. 
Then, $[k(\bt)(f):k(\bt)]=[k_{p}(\bt)(f):k_{p}(\bt)]$. 
\end{proposition}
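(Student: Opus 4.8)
The plan is to show that the minimal polynomial of $f$ over $k(\bt)$ remains irreducible over $k_p(\bt)$. Let $E(\bt,y) \in k[\bt,y]$ be the minimal polynomial of $f$ over $k(\bt)$; by \cref{lem:separable} (applied over $k$, which is legitimate since the argument there only used section operators that are well-defined on power series over any field when one works with the intrinsic $y$-degree), $E$ is separable in $y$. The degree $[k_p(\bt)(f):k_p(\bt)]$ equals the degree of the minimal polynomial of $f$ over $k_p(\bt)$, which divides $E$; so it suffices to prove $E$ stays irreducible after base change from $k(\bt)$ to $k_p(\bt)$. Equivalently, writing $L = k(\bt)$, I want $L(f) \otimes_L L_p$ to be a field, where $L_p$ denotes $k_p(\bt)$.

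First I would recall the standard fact that $k_p/k$ is a \emph{purely inseparable} extension, and hence so is $k_p(\bt)/k(\bt)$: indeed every element of $k_p(\bt)$ has some $p^r$-th power lying in $k(\bt)$. The key general principle is then that a purely inseparable field extension is linearly disjoint from any \emph{separable} extension. Since $L(f)/L$ is separable (as $E$ is a separable polynomial), $L(f)$ and $L_p$ are linearly disjoint over $L$, so $L(f) \otimes_L L_p$ is a field and
\[
[L_p(f) : L_p] = [L(f) \otimes_L L_p : L_p] = [L(f):L] = \deg_y E\,.
\]
This gives exactly $[k(\bt)(f):k(\bt)] = [k_p(\bt)(f):k_p(\bt)]$. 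To keep the argument self-contained one can prove the linear-disjointness claim directly: if $E = G \cdot H$ were a nontrivial factorization in $k_p(\bt)[y]$, then for $r$ large enough all coefficients of $G$ and $H$ would lie in $k^{1/p^r}(\bt)$; raising to the $p^r$-th power and using that Frobenius is a ring homomorphism, $E^{p^r} = G^{p^r} \cdot H^{p^r}$ would be a factorization with $G^{p^r}, H^{p^r}$ having coefficients in $k(\bt)$, of positive $y$-degree in $y^{p^r}$; but $E^{p^r}(\bt, y)$ as a polynomial in $y$ has all its roots equal (with multiplicity $p^r$) to the distinct roots of $E$, so any factor of positive degree in $y$ with coefficients in a subfield over which $E$ is irreducible would force $E$ itself to be reducible over $k(\bt)$ — a contradiction. (Separability of $E$ is what guarantees the roots are distinct and the usual Gauss-type argument goes through cleanly.)

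The only genuine subtlety — and the step I expect to be the main obstacle to write cleanly — is justifying \cref{lem:separable} over a non-perfect field $k$: the section operators $S_{\br}$ as defined in the excerpt require $k$ perfect (one needs $p$-th roots of coefficients). The fix is that one does not actually need the section operators here; one only needs that $E$, being a minimal polynomial of an element $f$ that is separable over $k(\bt)$ — or rather, that $E$ is not a polynomial in $y^p$. This can be seen purely formally: if $E(\bt,y) = F(\bt,y^p)$, apply $\partial/\partial y$ to get $E_y = 0$, so $\gcd(E, E_y) = E$, contradicting that a minimal (hence irreducible) polynomial over a field must be squarefree when... — no: over a non-perfect field an irreducible polynomial can fail to be separable. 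So instead I would argue as in \cref{lem:separable} but replacing section operators by the \emph{Frobenius twist} on coefficients: from $F(\bt, f^p) = 0$ one writes $f^p = \F(f)$ where $\F$ is Frobenius on $k_p[[\bt]]$, deduces $F(\bt, \F(f)) = 0$, hence (applying $\F^{-1}$, valid in $k_p[[\bt]]$) that $F^{(1/p)}(\bt, f) = 0$ where $F^{(1/p)}$ has coefficients the $p$-th roots of those of $F$; this has smaller $y$-degree than $E$, contradicting minimality — but note this produces a polynomial over $k_p(\bt)$, not $k(\bt)$, so it only contradicts minimality over $k_p(\bt)$. That is in fact fine: we may as well first pass to $k_p$, note that the minimal polynomial $E_p$ of $f$ over $k_p(\bt)$ is separable by exactly \cref{lem:separable}, then observe $E_p \mid E$ in $k_p(\bt)[y]$, and finally run the linear-disjointness / Gauss argument the \emph{other} way: since $E_p$ is separable over $k_p(\bt)$ and $k_p(\bt)/k(\bt)$ is purely inseparable, $E_p$ must actually have coefficients in $k(\bt)$ (a separable polynomial fixed up to Frobenius powers by all $\F^r$-conjugacy is already defined over $k(\bt)$), forcing $E_p = E$ and hence equality of degrees. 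I would present this last route as the cleanest, spending most of the written proof on the two facts "$k_p(\bt)/k(\bt)$ is purely inseparable" and "a separable irreducible polynomial over $k_p(\bt)$ whose $p^r$-th coefficient-power lies in $k(\bt)$ is already in $k(\bt)[y]$".
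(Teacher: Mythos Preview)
Your approach via separability and linear disjointness is different from the paper's and has a genuine gap. The paper proves instead a completely elementary descent lemma (\cref{lem: descente}): if power series $f_1,\ldots,f_r\in k_0[[\bt]]$ are linearly dependent over $k_1(\bt)$ for an \emph{arbitrary} field extension $k_1/k_0$, then they are already linearly dependent over $k_0(\bt)$. One simply picks a $k_0$-basis $(e_\ell)$ of $k_1$, expands the coefficients of a dependence relation in that basis, and reads off a relation over $k_0(\bt)$ from any nonzero $e_\ell$-component; the crucial point is that the $f_i$ have coefficients in $k_0$. Applied to $1,f,\dots,f^d \in k[[\bt]]$, this gives the proposition immediately --- no separability, no Frobenius, nothing special about $k_p/k$.

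Your gap is precisely the step you flag as ``the main obstacle'': showing that $E$ is separable over $k(\bt)$, or equivalently that $E_p\in k(\bt)[y]$. Both attempted justifications fail. The direct factorization argument (raise a putative $E=GH$ to the $p^r$-th power and use unique factorization of $E^{p^r}$ over $k(\bt)$) does prove irreducibility of $E$ over $k_p(\bt)$ \emph{given} that $E$ is separable, but cannot establish separability itself, so it is circular. Your ``other direction'' claim --- that a separable irreducible polynomial over $k_p(\bt)$ whose coefficients have $p^r$-th powers in $k(\bt)$ must already lie in $k(\bt)[y]$ --- is false: with $k=\FF_p(s)$, the polynomial $y-s^{1/p}\in k_p(\bt)[y]$ is separable and irreducible and $(s^{1/p})^p=s\in k(\bt)$, yet $s^{1/p}\notin k(\bt)$. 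What distinguishes your situation from this counterexample is that $E_p$ has a root $f$ whose coefficients lie in $k$; but you never exploit this, and exploiting it is exactly the content of the paper's lemma. In short, your reductions are valid but reduce the proposition to an equivalent statement (separability of $E$) that still requires the descent argument you are trying to avoid.
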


\cref{prop: descente} is a direct consequence of the following lemma.

\begin{lemma}\label{lem: descente} 
Let $k_0$ be a field, $k_1$ be an extension of $k_0$, $\bt = (t_1,\ldots,t_n)$ 
be indeterminates, and $f_1(\bt),\ldots,f_r(\bt)\in {k_0}[[\bt]]$.
If the power series $f_1,\ldots,f_r$ are linearly dependent over the field $k_1(\bt)$, then they
are linearly dependent over the field~$k_0(\bt)$.

\end{lemma}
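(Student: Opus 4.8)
The plan is to prove \cref{lem: descente} by a descent argument on linear-algebra data, reducing a $k_1(\bt)$-linear dependence to a $k_0(\bt)$-linear dependence. First I would clear denominators: a nontrivial linear relation $\sum_{i=1}^r c_i(\bt) f_i(\bt) = 0$ with $c_i \in k_1(\bt)$, not all zero, can be multiplied by a common denominator to get a relation $\sum_{i=1}^r P_i(\bt) f_i(\bt) = 0$ with $P_i \in k_1[\bt]$, not all zero. Now fix a degree bound: let $D$ be an integer at least as large as $\deg_{t_j} P_i$ for all $i,j$, and consider the finite set $\mathcal{M}$ of monomials $\bt^{\bu}$ with $\bu \in \{0,\ldots,D\}^n$. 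Writing $P_i = \sum_{\bu} a_{i,\bu}\bt^{\bu}$ with $a_{i,\bu} \in k_1$, the relation becomes a condition on the finite vector of unknowns $(a_{i,\bu})_{i,\bu} \in k_1^{\,r|\mathcal{M}|}$.

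The key observation is that this condition is \emph{linear over $k_0$} in these unknowns. Indeed, writing $f_i = \sum_{\bm \in \N^n} a_i(\bm)\bt^{\bm}$ with $a_i(\bm) \in k_0$, the coefficient of $\bt^{\bn}$ in $\sum_i P_i f_i$ is $\sum_{i}\sum_{\bu} a_{i,\bu}\, a_i(\bn - \bu)$ (with the convention that $a_i(\bv) = 0$ if $\bv \notin \N^n$), and requiring all of these to vanish is a homogeneous linear system in the $a_{i,\bu}$ with \emph{coefficients in $k_0$}. So I would argue: the solution space $V_{k_1}$ of this system over $k_1$ is nonzero (it contains our relation), and it is obtained from the solution space $V_{k_0}$ over $k_0$ by extension of scalars, $V_{k_1} = V_{k_0} \otimes_{k_0} k_1$, since solution spaces of linear systems commute with field extension. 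Hence $V_{k_0} \neq 0$: there is a nonzero solution with all $a_{i,\bu} \in k_0$, which gives polynomials $P_i \in k_0[\bt]$, not all zero, with $\sum_i P_i f_i = 0$, i.e.\ a $k_0(\bt)$-linear dependence.

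One technical point to handle carefully is that the linear system above is \emph{infinite} (one equation per $\bn \in \N^n$), so I cannot directly invoke finite-dimensional linear algebra. The cleanest fix is to note that the solution space is the intersection of the kernels of countably many $k_0$-linear forms on the finite-dimensional $k_0$-vector space $k_0^{\,r|\mathcal{M}|}$; a decreasing chain of subspaces of a finite-dimensional space stabilizes, so only finitely many of the equations are needed, and then the finite-dimensional statement ``$\dim_{k_1}(V\otimes_{k_0}k_1) = \dim_{k_0} V$'' applies. Alternatively, and perhaps more transparently, I would phrase it as: the $k_0$-subspace $U \subseteq k_0^{\,r|\mathcal{M}|}$ spanned by (the coefficient vectors of) all these linear forms has a $k_0$-basis, say of size $N$; the vanishing condition is equivalent to the vanishing of those $N$ forms; now over any field extension $k$ of $k_0$, the solution space has dimension $r|\mathcal{M}| - N$ regardless of $k$ (the rank of a matrix over $k_0$ does not change under field extension). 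Since over $k_1$ this dimension is positive, it is positive over $k_0$ as well.

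I expect the main obstacle to be purely expository: getting the reduction from rational to polynomial coefficients and the choice of the degree bound $D$ stated cleanly, and justifying the ``rank is insensitive to field extension'' step without drowning in notation for the infinite system. There is no deep difficulty — the heart of the matter is the elementary fact that a system of homogeneous linear equations with coefficients in $k_0$ that has a nontrivial solution over an extension $k_1$ already has a nontrivial solution over $k_0$. Finally, to deduce \cref{prop: descente}: apply \cref{lem: descente} with $k_0 = k$, $k_1 = k_p$, and $f_i = f^{i-1}$ for $i = 1,\ldots,d+1$ where $d = [k(\bt)(f):k(\bt)]$; since $1, f, \ldots, f^d$ are linearly independent over $k(\bt)$, the lemma shows they remain linearly independent over $k_p(\bt)$, whence $[k_p(\bt)(f):k_p(\bt)] \geq d+1 > d$ is impossible, so the degree cannot increase; and it obviously cannot decrease since $k(\bt) \subseteq k_p(\bt)$. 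Therefore the two degrees are equal.
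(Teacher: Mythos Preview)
Your proof of the lemma is correct and follows essentially the same approach as the paper: clear denominators to get polynomial coefficients, observe that the resulting vanishing condition is a homogeneous $k_0$-linear system in the finitely many coefficients of the $P_i$, and descend the nontrivial $k_1$-solution to~$k_0$. The paper carries out this last step concretely by fixing a $k_0$-basis $(e_\ell)$ of $k_1$ and decomposing the given relation along it (which automatically handles the ``infinite system'' issue you discuss), whereas you invoke the equivalent abstract principle that the solution space of a $k_0$-linear system is unchanged under field extension; these are the same argument at different levels of explicitness.

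One small slip in your closing paragraph on \cref{prop: descente}: with $d = [k(\bt)(f):k(\bt)]$ it is $1, f, \ldots, f^{d-1}$ (not $1, f, \ldots, f^d$) that are linearly independent over $k(\bt)$; the contrapositive of the lemma then gives $[k_p(\bt)(f):k_p(\bt)] \geq d$, and the reverse inequality is immediate since $k(\bt) \subseteq k_p(\bt)$.
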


\medskip

\begin{proof} By assumption, there exist polynomials $A_i(\bt)\in  k_1[\bt]$, not all zero, such 
that 
\begin{equation}\label{eq: rel}
\sum_{i=1}^rA_i(\bt)f_i(\bt)=0\,.
\end{equation}
Set  
\[
A_i(\bt) \coloneqq\sum_{{\bf j}\in \mathcal S_i}b_{i,\bf j}{\bf t}^{\bf j}\,
\]
where we let $\mathcal S_i\subset \mathbb N^n$ denote the support of $A_i$, and 
\[
f_i(\bt) \coloneqq\sum_{\bj \in\mathbb N^n} a_{i,\bf j}{\bf t}^{\bf j}\in {\bf k}_0[[\bt]]\,.
\]
Let $(e_\ell)_{\ell \in L}$ be a basis of $k_1$, seen as a ${k}_0$-vector space. 
Thus, there exist  some $c_{i,{\bf j},\ell}\in k_0$ such that 
\[
b_{i,\bf j} \coloneqq\sum_{\ell\in L} c_{i,{\bf j},\ell} e_\ell\,.
\]
Then, Equality \ref{eq: rel} implies that for all  ${\bf k}\in\mathbb N^n$, one has 
\begin{equation*}
\sum_{i=1}^r \sum_{{\bf j} \in \mathcal S_i} b_{i,{\bf j}} a_{i,{\bf k} - {\bf j}} =0 \,,
\end{equation*}
and hence 
\begin{equation*}
\sum_{i=1}^r \sum_{{\bf j} \in \mathcal S_i}\sum_{\ell\in L} c_{i,{\bf j},\ell} e_\ell a_{i,{\bf k} 
- {\bf j}} =\sum_{\ell\in L} \left( \sum_{i=1}^r \sum_{{\bf j} \in \mathcal S_i}c_{i,{\bf j},\ell} 
a_{i,{\bf k} - {\bf j}}\right)e_\ell=0 \,.
\end{equation*}
Since the $e_\ell$'s are linearly independent over $k_0$, we obtain 
\begin{equation}\label{eq: cjk}
 \sum_{i=1}^r \sum_{{\bf j} \in \mathcal S_i}c_{i,{\bf j},\ell} a_{i,{\bf k} - {\bf j}}=0
\end{equation}
for all $\ell\in L$ and all ${\bf k}\in\mathbb N^n$.
Setting $A_{i,\ell}(\bt) \coloneqq \sum_{{\bf j}\in\mathcal S_i}c_{i,{\bf j},\ell} {\bf t}^{\bf
j}\in {k_0}[\bt]$, Equality \eqref{eq: cjk} implies that
\begin{equation}\label{eq: aik}
\sum_{i=1}^rA_{i,\ell}(\bt)f_i(\bt)=0, \quad \text{for all} \;  \ell \in L\,.
\end{equation}
Since the polynomials $A_i$ are not all zero, the coefficients $b_{i,\bf j}$ are not all zero, and
the same is also true for the coefficients $c_{i,{\bf j},\ell}$.
Hence, there exists an index $\ell$ such that the polynomials $A_{i,\ell}$, $1\leq i\leq r$, are
not all zero.
We thus deduce from \eqref{eq: aik} that $f_1,\ldots,f_r$ are linearly dependent over ${k}_0(\bt)$,
as wanted.
\end{proof}

%%%%%%%%%%%%%%%%%%%%%%%%%%%%%%%%%%%%%%%%

\section{State complexity in Christol's theorem}\label{sec: christol}

Given an integer $q\geq 2$, a multidimensional sequence ${\bf a}=(a(\bi))_{\bi\in\mathbb N^n}$ with
values in a finite set is said to be {\em $q$-automatic} if $a(\bi)$ is a finite-state function of
the base-$q$ expansions of the entries of $\bi$. This means that there exists a deterministic
finite automaton taking the base-$q$ expansion of each entry of~$\bi$ as input, and producing the
symbol $a(\bi)$ as output. For a formal definition, we refer the reader to \cite{AB_chapter}.

For the rest of this section, we let $q$ denote a prime power.  
The multivariate extension of Christol's theorem can be stated as follows. 
It is usually proved by following the approach initiated in \cite{CKMR} for the case $n=1$ (cf.\
\cite{Sa86,Sa87,DL,SW,Ha88}).

\begin{theorem*}
Let $f(\bt)\coloneqq\sum_{\bi \in\mathbb N^n}a(\bi)\bt^{\bi}\in \mathbb F_q[[\bt]]$.
Then $f$ is algebraic over $\mathbb F_q(\bt)$ if and only if the sequence ${\bf a} \coloneqq
(a(\bi))_{\bi\in\mathbb N^n}$ is $q$-automatic. 
\end{theorem*}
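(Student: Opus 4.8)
The plan is to deduce the multivariate Christol theorem from \cref{thm: main} (in the perfect field $\bk = \FF_q$, so section operators are well-defined) together with a classical finiteness criterion for automatic sequences. The crucial point is the remark following Theorem~\ref{thm:B}: over a \emph{finite} field $\FF_q$, a finite-dimensional $\bk$-vector space is a finite set, so invariance of a finite-dimensional $W \ni f$ under $\Omega_n$ forces the orbit $\Omega_n \cdot f$ to be finite. We will show that finiteness of $\Omega_n \cdot f$ is equivalent to $q$-automaticity of the coefficient sequence $\bf a$, and that algebraicity is equivalent to finiteness of this orbit.

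First I would treat the easy implication. Suppose $\bf a$ is $q$-automatic. Then by a multivariate Eilenberg-type argument (the $q$-kernel of $\bf a$, generated by the maps $\bi \mapsto a(q\bi + \br)$ for $\br \in \{0,\dots,q-1\}^n$ and their iterates, is finite) the set of power series obtained from $f$ by applying section operators is finite; equivalently, $\Omega_n \cdot f$ lies in a finite-dimensional (indeed finite) $\FF_q$-vector space $W$. Write $q = p^e$; the operator $S_{\br}$ for the base-$q$ digit vector $\br$ is a composition of $p$-section operators $S_{\br_1}\circ\cdots\circ S_{\br_e}$, so $q$-kernel finiteness gives $p$-orbit finiteness as well. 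The $\FF_q$-span $W \coloneqq \spn_{\FF_q}(\Omega_n \cdot f \cup \{f\})$ is then a finite-dimensional subspace of $K$ containing $f$ and stable under $\Omega_n$, so Theorem~\ref{thm:B} yields that $f$ is algebraic over $\FF_q(\bt)$. Conversely, if $f$ is algebraic, apply \cref{thm: main} (or \cref{cor: main}) to a defining polynomial $A(\bt,y)$: it produces a finite-dimensional $\FF_q$-vector space $W \subset K$ with $f \in W$ and $\Omega_n W \subseteq W$. Since $\FF_q$ is finite and $\dim_{\FF_q} W < \infty$, the set $W$ is finite, hence $\Omega_n \cdot f \subseteq W$ is finite.

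It then remains to pass from finiteness of $\Omega_n \cdot f$ back to $q$-automaticity of $\bf a$. From the explicit formula for $S_{\br}$ on power series recalled in the introduction, an element $S_{\br_1}\circ\cdots\circ S_{\br_t}(f)$ is the generating series of the sequence $\bi \mapsto a(p^t \bi + \bj)^{1/p^t}$ for the appropriate $\bj$ read off from $\br_1,\dots,\br_t$ in base $p$; grouping the $p$-sections in blocks of $e$ recovers the $q$-kernel sequences $\bi \mapsto a(q\bi+\br)$ up to the bijective Frobenius twist, which does not affect finiteness. Hence finiteness of $\Omega_n \cdot f$ is equivalent to finiteness of the $q$-kernel of $\bf a$, and the latter is equivalent to $q$-automaticity by the standard multidimensional kernel criterion (see \cite{AS03,AB_chapter}). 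Combining the two directions finishes the proof.

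The main obstacle I anticipate is purely bookkeeping rather than conceptual: matching the base-$p$ section operators $S_{\br}$ (and their compositions) with the base-$q$ kernel operators, keeping careful track of the Frobenius $1/p$-powers appearing in the coefficients, and invoking the correct multidimensional version of Eilenberg's finiteness-of-kernel criterion. None of these steps is deep once the statement of \cref{thm: main} and the remark after Theorem~\ref{thm:B} are in hand; the content of Christol's theorem here is really the content of \cref{thm: main} specialized to $\bk = \FF_q$.
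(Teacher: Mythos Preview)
Your proposal is correct and follows essentially the same route the paper itself indicates. The paper does not give a self-contained proof of this classical statement; it cites it as known (see the sentence right before the theorem in \cref{sec: christol}) and, in the Remark following Theorem~\ref{thm:B}, sketches exactly the argument you write out: over $\FF_q$ a finite-dimensional invariant $W$ is a finite set, so $\Omega_n\cdot f$ is finite, and by Eilenberg's kernel criterion this is equivalent to $q$-automaticity. The quantitative Proposition~\ref{prop: invariant1} (especially Equality~\eqref{eq: ker}) records precisely the ``finiteness of orbit $\Leftrightarrow$ automaticity'' step you invoke. Your handling of the $p$ versus $q$ bookkeeping (grouping $p$-sections into blocks of $e$ to recover $q$-kernel elements, and noting that the Frobenius twist is bijective on $\FF_q$) is the right way to bridge the paper's base-$p$ section operators with the base-$q$ kernel, and is indeed only bookkeeping.
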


On each side, there is a natural way to measure the complexity of the corresponding objects, as
described below.
A natural problem is then to study the interplay between the complexity of 
the algebraic power series $f$ and that of its sequence of coefficients $\bf a$.

%%%%%%%%%%%%%%%%%%
\subsection{Two notions of complexity}

As already mentioned in \cref{sec: intro}, the complexity of an algebraic power series $f\in
\mathbb F_q[[\bt]]$ can be measured by its degree $d$ and either its partial height $\bh\coloneqq
(h_1,\ldots,h_n)$ or its total height $h$.
We recall that $d\coloneqq [\mathbb F_q(\bt)(f):\mathbb F_q(\bt)]$, and, for all $i$, $1\leq i\leq
n$, $h_i$ (resp.\ $h$) is the minimal degree in the variable $t_i$ (resp.\ the minimal total degree
in~$\bt$) of a nonzero polynomial $A(\bt,y)$ such that $A(\bt,f)=0$.
These two notions of height are the same when $n=1$.

The complexity of a $q$-automatic sequence $\bf a$ is measured by its \emph{state complexity}. We
let $\stackrel{\longleftarrow}{\comp_q}({\bf a})$ denote the number of states in a minimal finite
automaton generating $\bf a$ in {\em reverse} reading, by which we mean that the input $\bi$ is
read starting from the least significant digits. In a similar way, we let $
\stackrel{\longrightarrow}{\comp_q}({\bf a})$ denote the state complexity of $\bf a$ with respect
to {\em direct} reading.
In general, $\stackrel{\longleftarrow}{\comp_q}({\bf a})$ and
$\stackrel{\longrightarrow}{\comp_q}({\bf a})$ behave quite differently and are only related by the
inequalities $\stackrel{\longleftarrow}{\comp_q}({\bf a}) \leq
q^{\stackrel{\longrightarrow}{\comp_q}({\bf a})}$ and $\stackrel{\longrightarrow}{\comp_q}({\bf a})
\leq q^{\stackrel{\longleftarrow}{\comp_q}({\bf a})}$. 
These estimates are derived from classical bounds for converting a nondeterministic finite
automaton into a deterministic one (see, for example, \cite[Chapter 4]{AS03}).

%%%%%%%
\subsection{Previous bounds on the state complexity}

Let us first recall that if ${\bf a}$ is generated by a $q$-automaton with at most $m$ states in
reverse reading, it is not difficult to show that the associated power series $f$ has degree $d\leq
q^m-1$ and total height $h\leq mq^{m}$ (this follows, for instance, from the proofs of
\cite[Propositions 5.1 and 5.2]{AB13}). Furthermore, it seems that these bounds cannot be
significantly improved in general.

Bounds in the other direction are more challenging. 
The approach based on Ore's polynomials, initiated in \cite{CKMR} and pursued in
\cite{Sa86,Sa87,DL,SW,Ha88,Ha89,AB12}, leads to bounds of the form
\[\stackrel{\longleftarrow}{\comp_q}({\bf a})\leq q^{Aq^{B}}\, ,\] where $A$ and $B$ are polynomial
functions of the parameters $d,h_1,\ldots,h_n$, that can be made explicit. 
The common feature of these bounds is that they have a doubly exponential nature (with respect to
the size $q$ of the ground field).

By contrast, when $f$ is a rational function (\emph{i.e.}\ $d=1$), one can easily obtain the bound
$q^N$, where $N\coloneqq \min \{(h_1+1)\cdots (h_n+1), \binom{n+h}{n}\}$, and thus get rid of the
double exponential.
This follows from \cref{prop: invariant1} using the vector spaces $W$ and $W'$
introduced in \cref{sec: intro}, and suggests that the previous bounds are artificially
large.
In a more recent paper, Bridy \cite{Br17} drastically improved on these doubly exponential bounds
in the case $n=1$. More precisely, he proved that 
\begin{equation}\label{eq: bridy}
\stackrel{\longleftarrow}{\comp_q}({\bf a})  \leq (1+o(1))q^{h+d+g-1} \,,
\end{equation}
where $g$ is the genus of the projective curve associated with $f$, and where the $o(1)$ term tends
to $0$ for large values of any of $q,h,d$, or $g$.
By Riemann's inequality, which gives $g\leq (h-1)(d-1)$, he deduced that 
\begin{equation}\label{eq: bridy2}
\stackrel{\longleftarrow}{\comp_q}({\bf a})  \leq (1+o(1))q^{hd} \,.
\end{equation}
He also proved that 
\begin{equation}\label{eq: bridy3}
\stackrel{\longrightarrow}{\comp_q}({\bf a})  \leq q^{(h+1)d} \,.
\end{equation}

Bridy's approach is based on a new proof of Christol's theorem in the context of algebraic
geometry, due to Speyer (see his blog post untitled \emph{Christol's theorem and the Cartier
operator}\footnote{Available at {\url{https://sbseminar.wordpress.com/2010/02/11}}.}).
Speyer's argument is elegant, connecting finite automata with the geometry of curves. However, the
price to pay to get \eqref{eq: bridy} is that some classical but nonelementary background from
algebraic geometry is needed: the Riemann-Roch theorem, the existence and the basic properties of
the \emph{Cartier} operator acting on the space of K\"ahler differentials of the function field
associated with $f$, along with asymptotic bounds for the Landau function. Also, this geometric
method does not seem to generalize easily to higher dimension. In an unpublished note, Adamczewski
and Yassawi \cite{AY} showed how a slightly weaker bound can be obtained in an elementary way using
diagonals, as in the original proof of Christol's theorem \cite{Ch79}, and resultant techniques.
However, the use of resultants makes the proof somewhat tedious.

%%%%%%%
\subsection{A simply exponential bound in all dimensions}

As a consequence of \cref{thm: main}, we obtain the following bound valid in any dimension. 

\begin{theorem}\label{thm: christol_eff}
Let $f(\bt)\coloneqq\sum_{\bi \in \mathbb N^n}a(\bi)\bt^i \in \mathbb F_q[[\bt]]$ be algebraic over
$\mathbb F_q(\bt)$ with degree~$d$, total height $h$, and partial height $\bh\coloneqq(h_1,\ldots,h_n)$.
Set 
\[
N \coloneqq d\cdot \min\left\{  \prod_{i=1}^n(h_i+1), \binom{n+h}{n}\right\} 
\]
and 
\[
 N' \coloneqq (d+1) \cdot \min\left\{  \prod_{i=1}^n(h_i+1), \binom{n+h}{n}\right\} \,.
 \]
Then 
\[\stackrel{\longleftarrow}{\comp_q}({\bf a})
\leq 1+ q^{N}  
\quad \mbox{ and } \quad
  \stackrel{\longrightarrow}{\comp_q}({\bf a})
\leq q^{N'}\,.\] 
\end{theorem}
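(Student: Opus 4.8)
The plan is to deduce \cref{thm: christol_eff} from \cref{thm: main} (or rather its refined form in \cref{rem: main}) together with two classical correspondences between section operators and automata: the one of Eilenberg relating finite $\Omega_n$-stable spaces to automaticity in reverse reading, and the standard subset construction relating the two reading directions. First I would recall that a $q$-automatic sequence $\bf a$ read in reverse is generated by the automaton whose states are the elements of the $\Omega_n$-orbit $\Omega_n\cdot f$, whose transitions are the section operators $S_{\br}$, and whose output is the value $g(\mathbf 0)$ of the constant term of a state $g$. Thus $\stackrel{\longleftarrow}{\comp_q}({\bf a})$ is at most $\Card(\Omega_n\cdot f)$. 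By \cref{rem: main}, we have $\Omega_n\cdot f = \{f\}\cup\Omega_n^+\cdot f$ with $\Omega_n^+\cdot f\subset W_1$, and \[\dim_k W_1\leq d\cdot \min\left\{\prod_{i=1}^n(h_i+1),\binom{n+h}{n}\right\}=N\,.\] Since $\FF_q$ is finite, $W_1$ has at most $q^{\dim_k W_1}\leq q^N$ elements, whence $\Card(\Omega_n\cdot f)\leq 1+q^N$ and the first bound follows.

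For the direct-reading bound I would pass through the nondeterministic-to-deterministic conversion. A minimal automaton for $\bf a$ in direct reading is obtained by determinising, via the subset construction, the reverse of the reverse-reading automaton described above; concretely, its states are certain subsets of $\Omega_n\cdot f$, so one gets $\stackrel{\longrightarrow}{\comp_q}({\bf a})\leq 2^{\Card(\Omega_n\cdot f)}$, which is far too weak. The sharper route, following the idea already used for rational functions, is to work directly inside the vector space rather than with the orbit set: the reversed automaton is \emph{linear} over $\FF_q$, so after determinisation the reachable states are affine (indeed $\FF_q$-linear, up to the output functional) subspaces spanned by images of $f$ under products of section operators, all of which lie in $W_0$. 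Hence the number of reachable deterministic states is at most the number of $\FF_q$-points of $W_0$, namely $q^{\dim_k W_0}\leq q^{N'}$, giving $\stackrel{\longrightarrow}{\comp_q}({\bf a})\leq q^{N'}$. Here one uses $f\in W_0$ (not merely $W_1$), which is why $N'$ carries the factor $d+1$ rather than $d$.

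Concretely the argument I would write runs as follows. Let $V\subseteq K$ be the $\Omega_n$-stable space furnished by \cref{thm: main}/\cref{rem: main}, so $f\in W_0$ and $\Omega_n^+\cdot f\subseteq W_1\subseteq W_0$ with $\dim_k W_0\le N'$, $\dim_k W_1\le N$. Define the output form $\lambda\colon W_0\to\FF_q$, $\lambda(g)\coloneqq g(\mathbf 0)$ (the constant coefficient), which is $\FF_q$-linear. The triple $(W_1,(S_{\br})_{\br},\lambda)$ together with initial vector $f$ is then a \emph{linear representation} of $\bf a$ in reverse reading of $\FF_q$-dimension $\le N$; a fortiori its reachable-states automaton has at most $\Card(W_1)\le q^N$ nontrivial states, and adding the extra state for $f$ itself yields $\stackrel{\longleftarrow}{\comp_q}({\bf a})\le 1+q^N$. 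For direct reading, one forms the \emph{transpose} linear representation on the dual of the span, of dimension $\le N'$ (now using $f\in W_0$), and again the associated deterministic automaton has at most $q^{N'}$ states, giving $\stackrel{\longrightarrow}{\comp_q}({\bf a})\le q^{N'}$.

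The main obstacle I expect is bookkeeping the two reading conventions correctly and making the "linear representation $\Rightarrow$ automaton of that many states" passage airtight — in particular checking that the constant-term functional really computes the output symbol, that the section operators $S_{\br}$ are the correct transition maps for reverse reading (this is essentially the content of the formula for $S_{\br}(f)$ displayed in \cref{sec: intro}), and that the transpose/determinisation step for direct reading genuinely stays inside $W_0$ rather than blowing up. None of these steps is deep — they are the multivariate analogues of Eilenberg's and Bridy's arguments — but they require care, and I would likely isolate the orbit-versus-subspace dichotomy of \cref{rem: main} (the $W_0$ vs.\ $W_1$ distinction) as the single point that produces the $d$ versus $d+1$ discrepancy between the two bounds.
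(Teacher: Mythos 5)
Your proposal is correct and follows essentially the same route as the paper: the paper packages the automata-theoretic content you re-derive into Proposition~\ref{prop: invariant1} (Eq.~\eqref{eq: ker} is Eilenberg's orbit-size identity, Eq.~\eqref{eq: kerbis} is the multivariate analogue of Bridy's Proposition~2.4, i.e.\ the dual/transpose construction you sketch) and then combines that proposition with \cref{rem: main} for the reverse bound (using $W_1$, dimension $\le N$, hence $1+q^N$) and with \cref{thm: main}/\cref{cor: main} for the direct bound (using $W_0\ni f$, dimension $\le N'$, hence $q^{N'}$). Your identification of the $W_0$ vs.\ $W_1$ dichotomy as the source of the $d$ vs.\ $d{+}1$ discrepancy is exactly what \cref{rem: main} is isolating, so the only difference from the paper is that you unfold the proof of Proposition~\ref{prop: invariant1} inline rather than invoking it.
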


For $n=1$ we obtain $\stackrel{\longleftarrow}{\comp_p}({\bf a})\leq 1+ p^{(h+1)d}$ and
$\stackrel{\longrightarrow}{\comp_q}({\bf a})\leq q^{(h+1)(d+1)}$; these estimates are close to
Bridy's bounds \eqref{eq: bridy2} and \eqref{eq: bridy3}.
Note that this could be pushed a little by considering separately the orbit of $f$ under the
section operator~$S_{\bf 0}$.
In fact, this is precisely how Bridy proceeds to get~\eqref{eq: bridy2}. Let $C \coloneqq
\mbox{NP}(A)+(-1,0]^2$ be defined as in \cref{thm: main} (in the case $n=1$) and let $g_{A}$
denote the number of integer points in the interior of $\mbox{NP}(A)$.
Generically, we have that $g=g_{A}$. 
To simplify the exposition, the bounds given in \cref{thm: christol_eff} are obtained by
overapproximating $C\cap \mathbb N^2$ (for instance, by $(h_1+1)(d+1)$ for the second one). Using
$g_{A}$ instead, we would obtain bounds with the same flavor as~\eqref{eq: bridy}.
\cref{thm: christol_eff} is new in dimension $n\geq 2$, where only doubly exponential bounds
were available until now (cf.\ \cite{Ha89,FKdM,AB12} and the discussion in~\cite{Br17}).

The proof of \cref{thm: christol_eff} is derived from \cref{thm: main} and the following
result.

\begin{proposition}
\label{prop: invariant1}
Let $f(\bt)\coloneqq\sum_{\bi\in\mathbb N^n}a(\bi)\bt^{\bi}\in \mathbb F_q[[\bt]]$.  
Assume that there exists a $\mathbb F_q$-vector space $W\subset \mathbb F_q((\bt))$ 
of dimension $m$ containing $f$ and  invariant under the action of $\Omega_n$. 
Then 
\begin{equation}
\label{eq: kerbis}
\max\{\stackrel{\longrightarrow}{\comp_q}({\bf a}), \stackrel{\longleftarrow}{\comp_q}({\bf a}) \} \leq q^m\,.
\end{equation}
Furthermore, we have
\begin{equation}
\label{eq: ker}
\stackrel{\longleftarrow}{\comp_q}({\bf a}) = \vert \Omega_n\cdot f\vert \,. 
\end{equation}
\end{proposition}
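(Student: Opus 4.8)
The plan is to connect the orbit $\Omega_n \cdot f$ and the vector space $W$ to finite automata via the notion of $q$-kernel. Recall that for a $q$-automatic sequence $\mathbf a = (a(\bi))_{\bi \in \NN^n}$ over $\FF_q$, the reverse state complexity $\stackrel{\longleftarrow}{\comp_q}(\mathbf a)$ equals the cardinality of the $q$-kernel, which is precisely the set of sequences obtained from $\mathbf a$ by iterating the section operations $\bi \mapsto (\lfloor i_\ell/q\rfloor)_\ell$ after selecting a digit tuple $\br$. Under the dictionary between sequences in $\FF_q[[\bt]]$ and their coefficient sequences, the section operator $S_{\br}$ on power series corresponds exactly to the kernel operation indexed by $\br$ (up to the semilinear $p$-th root twist, which is a bijection on $\FF_q$ and does not change cardinalities). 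Hence the $q$-kernel of $\mathbf a$ is in bijection with $\Omega_n \cdot f$, giving \eqref{eq: ker}. I would cite Eilenberg's theorem (or the multivariate analogue in \cite{AB_chapter,AS03}) for the identification of the minimal reverse automaton with the kernel.

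For \eqref{eq: kerbis}, I would argue as follows. The orbit $\Omega_n \cdot f$ is contained in $W$, which has cardinality $q^m$ since $\dim_{\FF_q} W = m$. Therefore $\stackrel{\longleftarrow}{\comp_q}(\mathbf a) = |\Omega_n \cdot f| \le q^m$. For the direct reading complexity, one builds a (nondeterministic or deterministic) automaton whose states are elements of $W$: from state $w \in W$, reading digit-tuple $\br$ leads to $S_{\br}(w) \in W$ (using $\Omega_n$-invariance of $W$), and the output attached to $w$ is its constant term. Reading the base-$q$ expansion of $\bi$ from the \emph{most} significant digit reconstructs $a(\bi)$ as the constant term of the appropriate iterate of section operators; this is the standard ``direct'' automaton construction. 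Since it has at most $|W| = q^m$ states, we get $\stackrel{\longrightarrow}{\comp_q}(\mathbf a) \le q^m$ as well. Alternatively, one can simply quote the inequality $\stackrel{\longrightarrow}{\comp_q}(\mathbf a) \le q^{\stackrel{\longleftarrow}{\comp_q}(\mathbf a)}$ recalled earlier, but that would give $q^{q^m}$, which is too weak; so the direct automaton on $W$ must be built explicitly.

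\textbf{Main obstacle.} The routine part is the bijection between the $q$-kernel and the orbit, and the counting $|W| = q^m$. The genuinely delicate point is making the automaton-theoretic bookkeeping precise in the multivariate setting: one must fix a convention for how the digits of the several coordinates $i_1, \ldots, i_n$ of $\bi$ are grouped into a single input alphabet $\{0,\ldots,q-1\}^n$, deal with leading/trailing zeros and inputs of unequal length, and verify that with these conventions the kernel operation matches $S_{\br}$ and that the minimal automaton's state count is exactly the kernel cardinality. I would handle this by reducing to the one-variable case mechanically wherever possible and otherwise referring to the careful treatment of multidimensional automatic sequences in \cite{AB_chapter}, so that the proof here can be kept short: state the kernel characterization, observe the section operators realize the kernel operations, and conclude.
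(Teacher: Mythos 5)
Your treatment of Equation~\eqref{eq: ker} and of the reverse-reading bound in~\eqref{eq: kerbis} agrees with the paper's own justification, which simply cites Eilenberg's theorem and Bridy's Proposition~2.4 and declares the multivariate extension straightforward; identifying $\Omega_n\cdot f$ (up to the harmless Frobenius twist) with the kernel of $\mathbf a$, and noting $\Omega_n\cdot f\subseteq W$ with $|W|=q^m$, is exactly the intended argument.

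The problem is your direct-reading construction. Take states in $W$, transitions $w\mapsto S_{\br}(w)$, output the constant term, and read the digits of $\bi$ from most significant to least significant. If $\bi$ has base-$q$ digits $\br_0,\ldots,\br_{\ell-1}$ (least to most significant), this run ends at $S_{\br_0}\circ S_{\br_1}\circ\cdots\circ S_{\br_{\ell-1}}(f)$, whose constant term is (up to a Frobenius power) the coefficient $a(\tilde\bi)$ of the \emph{digit-reversed} index $\tilde\bi=\sum_m q^{\ell-1-m}\br_m$, not $a(\bi)$. Concretely, with $n=1$, $q=p=2$, $f=t$, $i=2=(10)_2$: reading $1$ then $0$ lands on $S_0(S_1(t))=S_0(1)=1$ with constant term $1$, whereas $a(2)=0$. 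What you have built is precisely the automaton for \emph{reverse} reading. For direct reading one must instead use the transpose automaton on the dual space $W^*$: the initial state is the constant-coefficient functional $\lambda_{\mathbf 0}(w)=[\bt^{\mathbf 0}]w$, the transition on input $\br$ sends $\lambda$ to $\F\circ\lambda\circ S_{\br}$ (which is again $\FF_q$-linear since the semilinearities of $\F$ and $S_{\br}$ cancel), and the output of state $\lambda$ is $\lambda(f)$. Reading $\bi$ most-significant-first then terminates at the functional $w\mapsto[\bt^{\bi}]w$, whose value on $f$ is $a(\bi)$; as the state set sits inside $W^*$, its cardinality is at most $q^m$. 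This dual construction is the mechanism behind Bridy's Proposition~2.4, which the paper cites for~\eqref{eq: kerbis}, and you do need it: the argument as you wrote it produces the wrong sequence.
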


In the case $n=1$, Inequality~\eqref{eq: kerbis} is a rephrasing of \cite[Proposition 2.4]{Br17},
while Equality~\eqref{eq: ker} is a rephrasing of a classical result of Eilenberg which asserts
that $\stackrel{\longleftarrow}{\comp_q}({\bf a})$ is equal to the cardinality of the $q$-kernel of
the sequence $\bf a$. Both results extend straightforwardly to arbitrary positive integers $n$.

\begin{proof}[Proof of \cref{thm: christol_eff}]
The upper bound for $\stackrel{\longleftarrow}{\comp_q}({\bf a})$ follows from~\cref{thm:
main} and Equation \ref{eq: kerbis}. The upper bound for
$\stackrel{\longrightarrow}{\comp_q}({\bf a})$ is a direct consequence of \cref{rem: main}
and Equation~\eqref{eq: ker}. 
\end{proof}

%%%%%%%%%%%%%%%%%%%%%%%%%%%%%%%
\section{Diagonals}\label{sec: diag}

%%%%%%%%%%%%%%%%%%%%%%%%%%%%%%%

Given a field $k$ and a multivariate power series 
\[
f(t_1,\ldots,t_n) \coloneqq  \sum_{(i_1,\ldots,i_n)\in\mathbb N^n} a(i_1,\ldots,i_n)t_1^{i_1}\cdots t_n^{i_n} \in k[[t_1,\ldots,t_n]]\,,
\]
the \emph{diagonal} of $f$ is defined as the univariate power series 
\[
\Delta(f)(t) \coloneqq  \sum_{i=0}^{+\infty} a(i,\ldots,i) t^i \in k[[t]]\, .
\] 
When $k$ is a number field, diagonals of algebraic functions form a remarkable class of power
series: they satisfy linear differential equations of Picard-Fuchs type, they belong to the class of
Siegel's $G$-functions, and they are constantly reoccurring in enumerative combinatorics.
Furthermore, diagonalization is related to integration and, in general, the diagonal of an
algebraic power series is transcendental over $k(t)$. For more details, we refer to the
survey~\cite{Ch15}.

By contrast, Furstenberg \cite{Fu67} proved that if $k$ has characteristic $p$ and $f$ is a
rational power series, then $\Delta(f)$ is algebraic over $k(t)$. In \cite{De84}, Deligne
generalized this result to diagonals of algebraic power series. Then Harase \cite{Ha88}, Sharif and
Woodcock \cite{SW}, Denef and Lipshitz \cite{DL}, as well as Salon \cite{Sa87,Sa86} (in some
particular case) independently reproved Deligne's theorem.

Combining \cref{thm: main} with Propositions~5.1 and~5.2 of \cite{AB13}, we readily obtain an
effective version of Deligne's theorem: given an algebraic power series $f \in \bk[[\bt]]$ with
degree $d$ and total heights $h$, the diagonal $\Delta(f)$ has degree at most $p^N$ and height at
most $N p^N$, where $N$ is explicitly given by
\[
  N \coloneqq (d+1)\cdot \binom{n+h}{n}\,.
\]
In this section, we will prove a further refinement of this result, which can be formulated as
follows.

\begin{theorem}
\label{thm:diag}
Let $k$ be an arbitrary field of characteristic $p$. 
Let $f \in \bk[[\bt]]$ be an 
algebraic power series with degree $d$, total height $h$, and partial
height $\bh=(h_1,\ldots,h_n)$. Set
\begin{equation*}
N \coloneqq (d+1)\cdot \min \left \{ 
\prod_{i=1}^n (h_i+1) - 
\prod_{i=1}^n h_i
, 
\binom{n+h}{n}-\binom{h}{n} \right\}\,.
\end{equation*}
Then, there exist $c_0, c_1, \ldots, c_N \in k[t]$, not all zero, such that 
\[c_0 \cdot \Delta(f) +  c_1 \cdot \Delta(f)^p + \cdots + 
c_N \cdot \Delta(f)^{p^N} = 0\, .\]
In particular, $\Delta(f)$ has degree at most $p^N-1$. 
\end{theorem}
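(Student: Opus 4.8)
The plan is to derive Theorem~\ref{thm:diag} from \cref{thm: main} (via its refinement in \cref{rem: main}) together with the two auxiliary facts from \cite{AB13} already invoked in the text: that the diagonal operator $\Delta$ interacts well with section operators, and that it preserves algebraicity in a quantitatively controlled way. The key point is that taking the diagonal amounts, at the level of coefficient sequences, to restricting the $q$-kernel to the ``main diagonal'' of the index lattice; concretely, if $W \subset K$ is a finite-dimensional $\bk$-vector space containing $f$ and stable under $\Omega_n$, then applying $\Delta$ to the elements of $W$ (or more precisely to a suitable $\Omega_1$-closure of $\Delta(W)$) produces a finite-dimensional space over $k$ containing $\Delta(f)$ and stable under the one-variable section operators $S_0, S_1, \ldots, S_{p-1}$. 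Since $\Delta(f) \in \bk[[t]]$ and the one-variable $\Omega_1$-orbit of $\Delta(f)$ lies in a $k$-vector space of dimension $N$, the power series $\Delta(f), \Delta(f)^p, \ldots, \Delta(f)^{p^N}$ are $N{+}1$ elements of an $N$-dimensional $k(t)$-vector space and hence satisfy a nontrivial Ore relation with coefficients in $k[t]$; this is the desired conclusion, and $\deg_y \leq p^N - 1$ follows because any Ore relation of this length yields an honest polynomial annihilator of degree $p^N$ in $y$, which can be reduced to $p^N - 1$ using that the minimal polynomial of $\Delta(f)$ divides it.

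First I would reduce to a perfect ground field: by \cref{prop: descente} the degree $[k(\bt)(f):k(\bt)]$ is unchanged on passing to the perfect closure $k_p$, and the total and partial heights can only decrease, so it suffices to prove the statement over $k_p$; thus assume $\bk$ perfect. Next I would apply \cref{thm: main}, and more sharply \cref{rem: main}, to the minimal polynomial $E(\bt,y)$ of $f$: writing $W_0$ as in that remark, we have $f \in W_0$, and the positive part of the orbit $\Omega_n^+ \cdot f$ lies in the smaller space $W_1$, whose dimension is bounded by $d \cdot \prod_i (h_i+1)$ (or $d\binom{n+h}{n}$). The crucial improvement that produces the ``minus $\prod h_i$'' (resp.\ ``minus $\binom{h}{n}$'') savings is that the diagonal operator kills all monomials $\bt^{\bi} y^j / E_y(\bt,f)$ whose exponent tuple $\bi$ is componentwise ``too large'' in a way incompatible with the diagonal — more precisely, $\Delta$ only depends on the values of section operators indexed by tuples $\br$ with equal coordinates, so a basis of the relevant image is indexed not by all of $C' \cap \NN^{n+1}$ but by the sub-polytope obtained after removing the interior lattice slab, and counting lattice points in $\NP(E) + J^{n+1}$ minus the ``shifted'' copy gives exactly the differences $\prod(h_i+1) - \prod h_i$ and $\binom{n+h}{n} - \binom{h}{n}$. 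I would make this precise by tracking, through the identity $\Delta \circ S_{f,\br,p-1}$-compatibility analogous to \cref{prop: res}, which monomials in the numerator can survive, exactly as in the proof of \cref{thm: main} but now with the one-variable target.

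Concretely, after establishing that $\Delta(f)$ lies in a $k$-vector space $V \subset k((t))$ of dimension $\leq N$ that is stable under $\Omega_1 = \langle S_0,\dots,S_{p-1}\rangle$, I would finish exactly as in the rational case: $V$ is finite-dimensional over $k$, hence $V \cdot k(t)$ is finite-dimensional over $k(t)$ of dimension $\leq N$, and the Frobenius powers $\Delta(f)^{p^m} = \F^m(\Delta(f))$ all lie in it because $V$ is $\F$-stable up to the section decomposition (this is the standard observation that an $\Omega_1$-stable $k$-space is stable under raising to the $p$-th power modulo lower-order terms — or, cleanly, one uses that $\Delta(f) \in V$ and $V$ being $\Omega_1$-stable forces $\F(V) \cap k(t)\text{-span}(V)$ to contain $\Delta(f)^p$). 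Then $N{+}1$ vectors in an $N$-dimensional space over $k(t)$ are $k(t)$-linearly dependent, clearing denominators yields $c_0,\dots,c_N \in k[t]$ not all zero with $\sum_m c_m \Delta(f)^{p^m} = 0$, and expanding this Ore polynomial gives a nonzero $B(t,y) \in k[t,y]$ with $B(t,\Delta(f)) = 0$ and $\deg_y B = p^N$; since the minimal polynomial of $\Delta(f)$ strictly divides $B$ unless $\Delta(f)$ is transcendental (impossible here) or $B$ is already minimal (in which case $p^N > p^N-1$ is replaced by noting $c_N$ could be scaled — more carefully, the genuinely optimal statement $\deg \leq p^N - 1$ follows from the length-$N$ Ore relation obtained using the smaller space $W_1$ rather than $W_0$, contributing only $p^{N}$ through $p^{N-1}\cdot(\text{something})$; I would phrase the final bound via the minimal-polynomial division argument). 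The main obstacle I anticipate is the bookkeeping in the lattice-point count: proving rigorously that the diagonal restriction removes precisely a translate of $\NP(E')$-worth of monomials, so that the cardinality drops by exactly $\prod h_i$ (resp.\ $\binom{h}{n}$) and not merely by ``something'', will require the same careful Newton-polytope chase as in \cref{thm: main}, adapted to retain only the equal-coordinate section indices — everything else is a routine transfer of the rational-case argument.
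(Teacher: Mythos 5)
Your overall plan — reduce to a perfect ground field via \cref{prop: descente}, apply \cref{thm: main} / \cref{rem: main} to get an $\Omega_n$-stable space $W$ containing $f$, transport through $\Delta$ using the commutation $\Delta \circ S_{(r,\ldots,r)} = S_r \circ \Delta$, and then close with an Ore relation — is the same strategy the paper takes (the paper routes it through a more general ``$G$-diagonal'' statement, \cref{thm:gendiag}, but the specialization to $G = \ZZ(1,\ldots,1)$ is exactly what you describe). However, there is a genuine gap at the central step, and it is precisely the point that produces the numerical savings in $N$.

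You assert that $\Delta(f)$ ``lies in a $k$-vector space $V \subset k((t))$ of dimension $\leq N$ that is stable under $\Omega_1$.'' This is not true, and the reason is instructive. The space $\Delta(W)$ is spanned by the diagonals $\Delta\big(\bt^{\bi} f^j / E_y(\bt,f)\big)$ for $(\bi,j) \in C' \cap \NN^{n+1}$. If $\bi' = \bi + m(1,\ldots,1)$, then
\[
\Delta\left(\frac{\bt^{\bi'} f^j}{E_y(\bt,f)}\right) = (t_1\cdots t_n)^m \cdot \Delta\left(\frac{\bt^{\bi} f^j}{E_y(\bt,f)}\right) = t^m \cdot \Delta\left(\frac{\bt^{\bi} f^j}{E_y(\bt,f)}\right)\,,
\]
so these two generators are collinear over $k(t)$ but \emph{not} over $k$. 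Hence $\dim_k \Delta(W)$ is controlled only by the full lattice count $\Card(C' \cap \NN^{n+1})$, not by the reduced count $N$; the savings ``$\prod (h_i{+}1) - \prod h_i$'' and ``$\binom{n+h}{n} - \binom{h}{n}$'' only materialize when one passes to the $k(t)$-span. The paper does exactly this: it forms the $K_{0,G}$-span $V$ of $\Delta_G(W)$ (where $K_{0,G} \simeq k(t)$), observes that generators indexed by tuples congruent modulo $G$ become $K_{0,G}$-collinear, and so bounds $\dim_{K_{0,G}} V$ by the cardinality of the projected set $\pi_G(C' \cap \NN^{n+1})$, which it then shows is $\leq N$ by choosing representatives with some coordinate equal to zero. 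Once you have a $k(t)$-space of dimension $\leq N$ rather than a $k$-space, the rest of your argument (Frobenius stability of $V$, then $N{+}1$ vectors in an $N$-dimensional $k(t)$-space are linearly dependent) goes through, although the Frobenius stability of $V$ also deserves a real argument — the paper proves it via the relative Frobenius isomorphism $\psi$ on $K_G \otimes_{K_{0,G},\F} K_{0,G}$, together with a dimension count showing $\psi^{-1}|_V$ is an isomorphism onto $V \otimes_{K_{0,G},\F} K_{0,G}$ — rather than the ``standard observation'' you gesture at, since $V$ is no longer a $k$-space stable under $\Omega_1$ in the naive sense.

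In short: the error is working over $k$ instead of $k(t)$ for the diagonalized space. As written, your argument would only yield the weaker bound $N' = (d+1)\prod(h_i+1)$ (or $(d+1)\binom{n+h}{n}$), not the sharpened $N$ of the theorem. The fix is exactly the $K_{0,G}$-span and $\pi_G$-representative count used in the paper's proof of \cref{thm:gendiag} and its specialization.
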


\subsection{Reduction of diagonals modulo primes}

Given a prime number $p$ and a power series $f(\bt)\coloneqq \sum_{\bi \in\mathbb N^n} a(\bi)
\bt^{\bi} \in \mathbb Z[[\bt]]$, we let $f_{\vert p}$ denote the reduction of $f$ modulo $p$, that
is
\[
f_{\vert p}(\bt) \coloneqq  \sum_{\bi \in\mathbb N^n} (a(\bi)\bmod{p})  \bt^{\bi} \in  \mathbb F_p[[\bt]] \,.
\] 
Deligne \cite{De84} made the following nice observation: since diagonalization and reduction modulo
$p$ commute, that is $\Delta(f)_{\vert p} = \Delta(f_{\vert p})$, if $f(\bt)\in \mathbb Z[[\bt]]$
is algebraic over $\mathbb Q(\bt)$, then $\Delta(f)_{\vert p}$ is algebraic over $\mathbb F_p(t)$
for almost all prime~$p$.
Hence, it is natural to ask how the ``complexity'' of the algebraic function $\Delta(f)_{\vert p}$
may increase when $p$ runs along the primes. When $\Delta(f)$ is transcendental, van der Poorten
\cite{vdP90} conjectured that the degree of $\Delta(f)_{\vert p}$ cannot remain bounded
independently of $p$. On the other hand, Deligne \cite{De84} suggested that the degree of
$\Delta(f)_{\vert p}$ should grow at most polynomially in $p$.

Using the vector spaces $W$ and $W'$ introduced in \cref{sec: intro}, we can deduce the
polynomial bound $p^N$, with $N\coloneqq \min \{ (h_1+1)\cdots(h_n+1), \binom{n+h}{n}\}$, for $f$
a multivariate rational power series with total height $h$ and partial height
$\bh=(h_1,\ldots,h_n)$.
The case where $f$ is not rational is much more challenging. 
Deligne \cite{De84} obtained a first result in this direction by proving that if $f(t_1,t_2)\in
\mathbb Z[[t_1,t_2]]$ is algebraic, then, for all but finitely many primes $p$, $\Delta(f)_{\vert
p}$ is of degree at most $Ap^B$, where $A$ and $B$ do not depend on $p$ but only on certain
geometric quantities associated with~$f$.
On the other hand, the works of Harase \cite{Ha88,Ha89}, Sharif and Woodcock \cite{SW}, and
Adamczewski and Bell \cite{AB12} lead to doubly exponential bounds (\emph{i.e.}\ of the form
$p^{p^{M}}$).
The first general polynomial bound (\emph{i.e.}\ of the form $p^{A}$) was obtained by Adamczewski
and Bell in \cite{AB13}. They provide an effective $A$ that depends only on the degree and the
total height of $f$. However, when $f$ has degree $d>1$, the value of $A$ becomes huge due to a
recursive procedure involving resultants.
For  instance, even for $n=2$, the estimate for $A$ is of the form 
\[d^{4^{(h^2d^6)^{d^{4^{hd^2}}}}}\]
and the length of the exponential tower increases at least linearly with~$n$. 
It is also possible to deduce from the work of Denef and Lipshitz \cite{DL} the existence of such
a polynomial bound, but with an ineffective constant~$A$.

\cref{thm:diag} readily implies the following result, which quite significantly improves the
previous known bounds.

\begin{theorem}
\label{thm: modp}
Let $f \in \ZZ[[\bt]]$ be an algebraic power series with degree $d$, total height~$h$, and partial
height $\bh=(h_1,\ldots,h_n)$. Set
\begin{equation}
\label{eq:N}
N \coloneqq (d+1)\cdot \min \left \{ 
\prod_{i=1}^n (h_i+1) - 
\prod_{i=1}^n h_i
, 
\binom{n+h}{n}-\binom{h}{n} \right\}\,.
\end{equation}
Then, for all prime numbers $p$, 
$\Delta(f)_{|p}$ has degree at most $p^N - 1$ over $\mathbb
F_p(t)$. 
\end{theorem}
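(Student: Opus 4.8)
The plan is to derive \cref{thm: modp} from \cref{thm:diag} by the standard Deligne commutation argument, so that the only real content is already contained in \cref{thm:diag}. First I would recall that reduction modulo $p$ is a ring homomorphism $\ZZ[[\bt]]\to\Fp[[\bt]]$, hence sends algebraic power series to algebraic power series, and that it commutes with taking diagonals: $\Delta(f)_{|p}=\Delta(f_{|p})$. This is purely formal since the diagonal is extracted coefficientwise. So it suffices to bound the degree of $\Delta(g)$ for $g\colonequal f_{|p}\in\Fp[[\bt]]$.

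The second step is a complexity-descent observation: the degree, the total height, and the partial height of $g=f_{|p}$ over $\Fp(\bt)$ are each at most the corresponding quantity for $f$ over $\QQ(\bt)$. Indeed, if $A(\bt,y)\in\ZZ[\bt,y]$ is a nonzero annihilating polynomial for $f$ with $\deg_y A=d$, $\deg_{\bt}A=h$, $\deg_{t_i}A=h_i$, we may clear denominators to assume $A$ has content $1$ in $\ZZ[\bt,y]$; then $A_{|p}\in\Fp[\bt,y]$ is nonzero (the content being $1$, not all coefficients are divisible by $p$) and annihilates $g$, with the same degree bounds. Hence the invariant $N$ in \eqref{eq:N}, being monotone in $d,h,h_i$, is no larger when computed for $g$ than for $f$; it is cleanest simply to apply \cref{thm:diag} to $g$ directly with its own degree and heights, which are $\le d,h,h_i$.

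The third step is the invocation of \cref{thm:diag} itself: applied to $k=\Fp$ and the algebraic power series $g\in\Fp[[\bt]]$, it produces $c_0,\dots,c_N\in\Fp[t]$, not all zero, with
\[c_0\cdot\Delta(g)+c_1\cdot\Delta(g)^p+\cdots+c_N\cdot\Delta(g)^{p^N}=0,\]
and in particular $\Delta(g)=\Delta(f)_{|p}$ has degree at most $p^N-1$ over $\Fp(t)$. Since by the first step $\Delta(f)_{|p}=\Delta(g)$, this is exactly the assertion of \cref{thm: modp}, and one notes that the bound holds for every prime $p$ (not merely all but finitely many), because \cref{thm:diag} is unconditional once $g$ is algebraic, and $g=f_{|p}$ is algebraic for every $p$ by the content-$1$ normalization.

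There is essentially no obstacle here: the theorem is a corollary, and the only point requiring a word of care is the normalization ensuring $f_{|p}$ remains nonzero-annihilated, i.e.\ that reducing a primitive annihilating polynomial modulo $p$ does not kill it. (If one prefers, one can sidestep even this by choosing, for each $p$, a primitive annihilator over $\ZZ_{(p)}$.) The genuine work — the Newton-polytope bookkeeping that yields the sharpened constant $N=(d+1)\cdot\min\{\prod(h_i+1)-\prod h_i,\ \binom{n+h}{n}-\binom{h}{n}\}$ rather than the cruder $(d+1)\binom{n+h}{n}$ — all lives in the proof of \cref{thm:diag}, which in turn rests on \cref{thm: main}; here we merely transport that bound across the reduction map.
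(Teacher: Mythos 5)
Your proof is correct and takes the same route as the paper, which — after recalling that diagonalization commutes with reduction modulo $p$ — states only that \cref{thm:diag} ``readily implies'' \cref{thm: modp}. The one detail you flesh out, namely reducing a primitive (content~$1$) annihilating polynomial so that $f_{|p}$ stays algebraic with degree and heights bounded by $d,h,\bh$ for \emph{every} prime $p$ rather than almost all, is precisely what upgrades Deligne's ``for almost all $p$'' to the unconditional statement, and the monotonicity of $N$ in $d,h,h_i$ is correctly noted and used.
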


\begin{remark} \label{rem:GLN}
Not only \cref{thm:diag} gives a nice bound on the degree of $\Delta(f)_{|p}$, but it also shows
that $\Delta(f)_{|p}$ is annihilated by an Ore polynomial of bounded $p$-degree.
This additional feature implies that the Galois conjugates of $\Delta(f)_{|p}$ are all contained in
an $\Fp$-vector space of dimension $N$ and eventually that the Galois group of $\Delta(f)$
(\emph{i.e.}\ the Galois group of the extension of $k(t)$ generated by $\Delta(f)$ and all its
Galois conjugates) canonically embeds, up to conjugacy, into $\GL_N(\Fp)$.
This observation allows for asking more precise questions about the uniformity with respect to~$p$.
For example, one may wonder if the Galois groups of $\Delta(f)_{|p}$ all come by reduction modulo
$p$ from a unique group (or maybe a finite number of groups) defined in characteristic zero.
\end{remark}

\begin{remark}
Using the same arguments as in \cite[p.~967]{AB13}, we could also prove a more general statement 
than \cref{thm: modp} by replacing the ring~$\mathbb Z$ with a 
number field and consider reductions modulo prime ideals.
In fact, we could even consider the case where $f$ has coefficients in an arbitrary field of
characteristic zero (see \cite[Theorem~1.4]{AB13}).
Note that, beyond diagonals of algebraic power series, there are other interesting families of
$G$-functions in $\mathbb Q[[t]]$ whose reductions modulo $p$ are algebraic (cf.\ \cite{VM21}).
Furthermore, algebraicity modulo $p$ turns out to be useful to prove transcendence and algebraic
independence results for power series in characteristic zero (cf.\ \cite{SW89,AGBS,AB13,ABD,VM23}).
\end{remark}

%%%%%%%%%%%%%%%%%%%%%%%%%%%%%
\subsection{Generalized diagonals}

In what follows, we consider a slight generalization of the diagonalization process.
Let $k$ be a perfect field of characteristic~$p$ and let $\bt \coloneqq  (t_1,\ldots, t_n)$ be a tuple of indeterminates. We set $K_0 \coloneqq k(\bt)$, $R \coloneqq k[[\bt]]$, and we let 
$K$ denote the field of fractions of $R$. 
Let $G$ be a subgroup of $\ZZ^n$ such that the quotient $\ZZ^n/G$ has
no torsion. We let $K_{0,G}$ be the subfield of $K_0$ generated by $k$ and
by the monomials $\bt^{\bi}$ with $\bi \in G$. Similarly, we define
$\bR_G$ as the $\bk$-subalgebra of $\bR$ consisting of series of
the form $\sum_{\bi \in G} a(\bi) \bt^{\bi}$.
Given that $G$ is abstractly isomorphic to $\ZZ^m$ for some integer
$m \leq n$, the rings $K_G$ and $\bR_G$ are respectively isomorphic to 
$\bk(x_1,\ldots, x_m)$ and $\bk[[x_1, \ldots, x_m]]$.

\begin{definition}
We keep the previous notation. 
The \emph{$G$-diagonal} is the operator defined by 
\[\begin{array}{rcl}
\Delta_G : \qquad
\bR & \longrightarrow & \bR_G \medskip \\
\displaystyle \sum_{\bi \in \NN^n} a(\bi) \bt^{\bi}
& \mapsto &
\displaystyle \sum_{\bi \in G} a(\bi) \bt^{\bi}
\end{array}\]
with the convention that $a(\bi) = 0$ when $\bi \not\in \NN^n$.
\end{definition}

When $G$ is the subgroup generated by $(1, \ldots, 1)$, the ring
$R_G$ is isomorphic to $k[[t]]$ \emph{via} the map $t_1 \cdots t_n \mapsto t$ and the diagonal operator $\Delta_G$ is the usual diagonal operator $\Delta$.
However the general construction $\Delta_G$ is more flexible and
allows in particular for partial diagonals: letting $G$ be the
subgroup generated by $(1, \ldots, 1)$ and by $e_i = (0, \ldots,0, 1, 0, \ldots, 0)$ (with $1$ in $i$-th position) for 
$i \in \{1, \ldots, m\}$, we obtain that $\bR_G \simeq k[[t_1, \ldots, t_m, x]]$
and 
\[\Delta_G\left(\sum_{i \in \NN^n} a(\bi) \bt^{\bi}\right) \,\, =
\sum_{\substack{(i_1, \ldots, i_m)\in \NN^m \\ n \in \NN}}
a(i_1, \ldots, i_m, n, \ldots, n)\: t_1^{i_1} \cdots t_m^{i_m} x^n\,.\]
In general, one can check that $\Delta_G$ is $K_{0,G}$-linear.

\begin{theorem}
\label{thm:gendiag}
Let $k$ be an arbitrary field of characteristic $p$,  
$G$ be a subgroup of $\ZZ^n$ such that $\ZZ^n/G$ has no
torsion, let $G_{\RR}$ be the subvector space of $\RR^n$ 
generated by $G$, and let 
$\pi_G : \RR^{n+1} \to (\RR^n/G_{\RR}) \times \RR$ denote the 
canonical projection. 
Let $A(\bt,y)\in \bk[\bt,y]$ and let $f\in \bk[[\bt]]$ 
satisfying the algebraic relation $A(\bt,f)=0$. Let $C$ be the
convex subset of $\RR^{n+1}$ defined by
\[
C  \coloneqq \NP(A) + \big(G_\RR \times (-1,0]\big)\, .
\]
Then, there exist $c_0, c_1, \ldots, c_N \in K_{0,G}$, not all zero, such that 
\begin{equation}
\label{eq:Ore}
c_0 \cdot \Delta_G(f) +  c_1 \cdot \Delta_G(f)^p + \cdots + 
c_N \cdot \Delta_G(f)^{p^N} = 0 \,,
\end{equation}
where $N  \coloneqq \Card \big(\pi_G(C \cap \NN^{n+1})\big)$.
\end{theorem}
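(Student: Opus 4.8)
The strategy is to mimic the proof of \cref{thm: main}, but now keeping track of how the variables collapse under the $G$-diagonal. The starting point is the $\bk$-vector space
\[
W \coloneqq \left\{ \frac{P(\bt,f)}{E_y(\bt,f)} : P\in \bk[\bt,y],\ \NP(P)\subset C' \right\},
\qquad C' \coloneqq \NP(E) + \big(G_\RR \times (-1,0]\big),
\]
where $E$ is the (normalized) minimal polynomial of $f$ over $k(\bt)$, which is separable in $y$ by \cref{lem:separable}. As in the proof of \cref{thm: main}, combining \cref{lem: furstenberg} and \cref{prop: res} shows that each section operator $S_{\br}$ sends an element $P(\bt,f)/E_y(\bt,f)$ with $\NP(P)\subset C'$ to an element $Q(\bt,f)/E_y(\bt,f)$; the only point to check is that the enlargement of the Newton polytope by $G_\RR\times(-1,0]$ (rather than by $(-1,0]^{n+1}$) is still absorbed. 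Here one uses that the section-operator shift $(\bi,j)\mapsto(\bi+\frac1p\br,\,j+\frac{p-1}p)$ followed by division by $p$ moves the polytope into $\NP(E) + I^{n+1} + \{(-\frac1p\br,-\frac{p-1}p)\}$ with $I=(-\frac1p,0]$, and since $I^{n+1}\subset G_\RR\times(-1,0]$ is false in general, one instead argues coordinate by coordinate: the last coordinate lands in $(-1,0]$ exactly as before, while the first $n$ coordinates only need to stay in $G_\RR$ — but $\NP(E)$ already lives (as far as we care after projecting by $\pi_G$) only up to translation by $G_\RR$, so the extra $I^n$-shift is harmless. The cleanest way is simply to repeat verbatim the computation in the proof of \cref{thm: main} with $J^{n+1}$ replaced by $G_\RR\times J$, checking that the chain of inclusions still closes because $I \subset J = (-1,0]$ in the last slot and the $G_\RR$ factor is a linear subspace (hence invariant under the relevant translations). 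So $W$ is a finite-dimensional $\bk$-vector space, invariant under $\Omega_n$, containing $f$.

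**Applying the diagonal.** Now I apply $\Delta_G$ to $W$. The key structural fact about $\Delta_G$ is its interaction with the Frobenius and with section operators. Since $\ZZ^n/G$ is torsion-free, $G$ is a direct summand of $\ZZ^n$, and choosing complementary coordinates one checks that $\Delta_G$ commutes with $\F^{-1}$ in the appropriate sense and that $\Delta_G\circ S_{\br}$ can be rewritten, on power series, as a section operator (in the $\simeq m$ remaining variables) composed with $\Delta_G$, up to the usual book-keeping of which residues survive. Concretely: for $f\in R$, the orbit $\Omega_n\cdot f$ spans $W$, hence $\Delta_G(\Omega_n\cdot f)$ spans $\Delta_G(W)$; and one shows that $\Delta_G(W)$ is stable under the monoid $\Omega_m$ of section operators on $R_G \simeq \bk[[x_1,\dots,x_m]]$. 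Therefore $\Delta_G(f)$ lies in a finite-dimensional $\bk$-vector space $W_G := \Delta_G(W) \subset \mathrm{Frac}(R_G)$ stable under $\Omega_m$, and by (the $\Omega$-invariant direction of) \cref{thm:B} — or more directly by the Ore-polynomial characterization, since a finite-dimensional $\Omega_m$-stable space forces an Ore relation — we get that $\Delta_G(f)$ satisfies
\[
c_0 \cdot \Delta_G(f) + c_1 \cdot \Delta_G(f)^p + \cdots + c_N \cdot \Delta_G(f)^{p^N} = 0
\]
with $c_i \in K_{0,G}$ not all zero, where $N = \dim_{\bk} W_G$.

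**The dimension count and the main obstacle.** It remains to bound $\dim_{\bk} \Delta_G(W)$. The space $W$ is spanned by the $\bt^{\bi} f^j / E_y(\bt,f)$ with $(\bi,j) \in C' \cap \NN^{n+1}$; applying $\Delta_G$, the monomial $\bt^{\bi}$ survives (in the target) only according to the class of $\bi$ modulo $G$, and two lattice points $(\bi,j)$, $(\bi',j')$ with the same image under $\pi_G$ produce (after diagonalization) the same basis vector up to the $K_{0,G}$-structure — this is where $\pi_G$ enters. Hence $\Delta_G(W)$ is spanned by at most $\Card(\pi_G(C'\cap\NN^{n+1}))$ elements, and one checks $\pi_G(C'\cap\NN^{n+1}) \subseteq \pi_G(C\cap\NN^{n+1})$ exactly as in the proof of \cref{thm: main} (writing $A = E\cdot F$ with $F\neq 0$, so $C = \NP(F) + C'$ contains a nonnegative-integer translate of $C'$, and $\pi_G$ of a translate has no larger image), giving $N \leq \Card(\pi_G(C\cap\NN^{n+1}))$. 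I expect the main obstacle to be the second paragraph: making precise the claim that $\Delta_G(W)$ is $\Omega_m$-stable. This requires a clean lemma relating $\Delta_G \circ S_{\br}$ on $R$ to section operators on $R_G$, which in turn hinges on choosing a $\ZZ$-basis of $\ZZ^n$ adapted to $G$ and tracking how base-$p$ digit extraction interacts with the projection $\ZZ^n \to \ZZ^n/G$; the torsion-freeness hypothesis is exactly what makes this digit-wise compatibility work, and getting the bookkeeping right (especially the role of the $(f+T)^{p-1}$-factor and the residue) is the delicate point.
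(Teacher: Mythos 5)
Your plan mirrors the paper's own proof in its main ingredients: the space $W$ cut out by the Newton-polytope condition $\NP(P) \subset C' = \NP(E) + (G_\RR \times (-1,0])$, the commutation of $\Delta_G$ with the section operators $S_{\br}$ for $\br \in G$, and the use of $\pi_G$ in the final count. But there is a genuine gap running through the whole argument: you repeatedly treat $W$ and $\Delta_G(W)$ as finite-dimensional \emph{$\bk$-vector spaces}, and they are not. Since $G_\RR$ is a linear subspace, $C'$ is unbounded in the $G_\RR$-directions, and $C' \cap \NN^{n+1}$ is infinite (in the classical diagonal case $G = \ZZ(1,\dots,1)$, it already contains all points $(j,\dots,j,0)$ with $j\geq 0$). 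The fractions $\bt^{\bi} f^{j}/E_y(\bt,f)$ for $(\bi,j)\in C'\cap\NN^{n+1}$ are $\bk$-linearly independent, so $\dim_\bk W = \infty$ and $\dim_\bk \Delta_G(W) = \infty$. The collapse you invoke --- that two lattice points with the same $\pi_G$-image give ``the same basis vector'' --- is a $K_{0,G}$-phenomenon, not a $\bk$-one: if $\bi' = \bi + \bg$ with $\bg\in G$, then $\bt^{\bi'} f^j = \bt^{\bg}\cdot\bt^{\bi} f^j$ and $\bt^{\bg}$ is a nontrivial scalar in $K_{0,G}$, not in $\bk$. Hence your ``$N = \dim_\bk W_G$'' is ill-posed, and the finiteness, the $\pi_G$-count, and the Ore relation must all be set up over $K_{0,G}$. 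The paper works with $V$, the $K_{0,G}$-span of $\Delta_G(W)$ in $\mathrm{Frac}(R_G)$, and bounds $\dim_{K_{0,G}} V$ by $\Card\big(\pi_G(C'\cap\NN^{n+1})\big)$. A smaller related issue: $W$ is only stable under $S_{\br}$ for $\br\in G$ (absorbing the shift $-\tfrac 1p \br$ into $G_\RR$ requires $\br\in G_\RR$, which for integer $\br$ is equivalent to $\br\in G$ by torsion-freeness), so ``invariant under $\Omega_n$'' overshoots what the geometry gives.

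The second missing piece is the passage from $V$ to the Ore relation. Once you are over $K_{0,G}$, ``a finite-dimensional $\Omega_m$-stable space forces an Ore relation'' is not automatic: what you need is that $\Delta_G(f)^{p^s}\in V$ for all $s\geq 0$, i.e.\ that $V$ is stable under the Frobenius endomorphism, and then linear dependence of $\Delta_G(f), \Delta_G(f)^p, \ldots, \Delta_G(f)^{p^N}$ over $K_{0,G}$ gives \eqref{eq:Ore}. The paper proves this Frobenius-stability by a $K_{0,G}$-semilinear dimension argument: introduce the relative Frobenius $\psi: K_G\otimes_{K_{0,G},\F}K_{0,G}\to K_G$, $x\otimes y\mapsto x^p y$, observe that $\psi^{-1}(g)=\sum_{\br\in G_p}S_{\br}(g)\otimes\bt^{\br}$ for $G_p$ a set of representatives of $G/pG$, use the $S_{\br}$-stability of $V$ (for $\br\in G$) to see that $\psi^{-1}$ restricts to an injective $K_{0,G}$-linear map $V\to V\otimes_{K_{0,G},\F}K_{0,G}$ between spaces of equal finite $K_{0,G}$-dimension, and conclude it is an isomorphism. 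Your intuition about adapted bases of $G$ and digit-wise compatibility points toward this, but without working over $K_{0,G}$ and without this Frobenius-stability argument, the proof does not close.
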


\begin{proof} 
We first observe that, by \cref{prop: descente}, we can replace without any loss of
generality the field $k$ by the perfect closure of the subfield of $k$ generated over~$\mathbb F_p$
by the coefficients of $f$. Hence, we can assume that $k$ is perfect.

Let $E \in \bk(\bt, y)$ be the minimal polynomial of $f$ and $E_y$ be the derivative of $E$ with
respect to $y$.
Set $J  \coloneqq (-1,0]$ and 
\[
C'  \coloneqq \NP(E) + \big(G_\RR \times J\big)\, .
\]
Repeating the proof of \cref{thm: main}, we show that the $\bk$-vector space
\[
W \coloneqq \left\{ \frac{P(\bt,f)}{E_y(\bt,f)} : 
P\in \bk[\bt,y],\, \NP(P) \subset C' \right\}
\]
contains $f$ and is invariant under~$S_{\br}$ for all $\br \in G$. Noticing that $\Delta_G$
commutes with $S_{\br}$ whenever $\br \in G$, we conclude that $\Delta_G(W)$ is invariant under
$S_{\br}$ for all $\br \in G$ as well.

Let $V$ be the $K_{0,G}$-span of $\Delta_G(W)$ in $K_G \coloneqq \text{Frac}(R_G)$.
By linearity, we find that $V$ is spanned by the elements $\bt^{\bi} f^j/E_y(\bt,f)$ for $(\bi, j)$
running over $C' \cap \NN^{n+1}$.
Besides, two fractions $\bt^{\bi} f^j/E_y(\bt,f)$ 
and $\bt^{\bi'} f^{j'}/E_y(\bt,f)$ are $K_{0,G}$-collinear
as soon as $\bi \equiv \bi' \bmod G$, which occurs if and
only if $\pi_G(\bi, j) = \pi_G(\bi', j)$. 
The dimension of $V$ over $K_{0,G}$ is then upper bounded by the cardinality of $\pi_G(C' \cap
\NN^{n+1})$, which is itself upper bounded by $N$ (see the last paragraph of the proof of~\cref{thm:
main} for more details).

The Frobenius map $\F$ acts as an endomorphism of $K_{0,G}$. 
We consider the ``relative'' Frobenius map of $K_G$ defined by 
\[\begin{array}{rcl}
\psi: \quad 
K_G \otimes_{K_{0,G},\F} K_{0,G} & \longrightarrow & K_G \\
x \otimes y & \mapsto & x^p y
\end{array}\]
where the notation $\otimes_{K_{0,G}, \F}$ means that we view
$K_{0,G}$ as an algebra over itself via $\F$.  Hence,
in $K_G \otimes_{K_{0,G}, \F} K_{0,G}$, we have 
$1 \otimes y = y^p \otimes 1$.
This construction ensures that $\psi$ is a $K_{0,G}$-linear
isomorphism. 
Moreover, it is related to the section operators \emph{via} the
formula 
\[\psi^{-1}(f) = \sum_{\br \in G_p} S_{\br}(f) \otimes \bt^{\br}\,,\]
where we let $G_p \subset G$ denote a set of representatives of $G/pG$.
Recall that we have proved earlier that $V$ is closed under the action of $S_{\br}$ for all $\br
\in G$. Therefore, we find that $\psi^{-1}$ induces a $K_{0,G}$-linear morphism from $V$ to $V
\otimes_{K_{0,G}, \F} K_{0,G}$.
Being the restriction of an injective map, this morphism is clearly injective.
Given that $V$ is finite dimensional over $K_{0,G}$ and that $\dim_{K_{0,G}} V = \dim_{K_{0,G}} (V
\otimes_{K_{0,G}, \F} K_{0,G})$, we conclude that it is an isomorphism.
Hence $\psi$ takes $V \otimes_{K_{0,G}, \F} K_{0,G}$ to $V$, which further implies that $V$
is invariant under the Frobenius map.

In particular, $\Delta_G(f)^{p^s}$ lies in $V$ for all nonnegative integers~$s$.
Since moreover $\dim_{K_{0,G}} V \leq N$, it follows that $\Delta_G(f), \Delta_G(f)^p, \ldots,
\Delta_G(f)^{p^N}$ must be linearly dependent over $K_{0,G}$.
Thus, there exist $c_0, c_1, \ldots, c_N \in K_{0,G}$, not all zero, such that
\[c_0 \cdot \Delta_G(f) +  c_1 \cdot \Delta_G(f)^p + \cdots + 
c_N \cdot \Delta_G(f)^{p^N} = 0 \,,\]
as desired.
\end{proof}

\subsection{Proof of \cref{thm:diag}}
We apply \cref{thm:gendiag} with the group $G$ generated
by $(1, \ldots, 1)$. As already noticed, the diagonal $\Delta_G$
is then the usual diagonal $\Delta$, up to the identification 
$t \coloneqq t_1 \cdots t_n$.
Let $A(\bt, y)$ be the minimal polynomial of $f$, so that $A$ has degree $d$, total height $h$, and partial height $\bh=(h_1,\ldots,h_n)$.
Let $\pi_G$ be the mapping and $C$ be the convex set defined
in the statement of \cref{thm:gendiag}. 
By \cref{thm:gendiag}, it remains to prove that $\Card \big(\pi_G(C \cap \NN^{n+1})\big) \leq N$. 

Let $c \coloneqq (a_1, \ldots, a_n, b) \in C \cap \NN^{n+1}$. We have $-1 < b \leq d$ and, given
that $b$ is an integer, we conclude that $0 \leq b \leq d$.
Moreover, up to translating $c$ by an element of $G$, one may assume that $0 \leq a_i \leq h_i$ for
all $i \in \{1, \ldots, n\}$ and that $\sum_{i=1}^n a_i\leq h$.
Define $a \coloneqq \min\{a_1, \ldots, a_n\}$ and, for all $i$, set $\tilde a_i \coloneqq a_i - a$.
Then  one of the first $n$ coordinates of $\tilde c \coloneqq (\tilde a_1, \ldots, \tilde a_n, b)$ 
vanishes. 
On the other hand, one has $0\leq \tilde a_i \leq h_i$ and $\sum_{i=1}^n \tilde a_i \leq h$. 
Furthermore, $\pi_G(c) = \pi_G(\tilde c)$. 
This ensures that any element of $\pi_G(C \cap \NN^{n+1})$ has a preimage in each of the sets 
\[
\mathcal E_1 \coloneqq \left\{ (a_1,\ldots,a_n,b) \in \mathbb N^{n+1} :  b\leq d,  \forall i, a_i 
\leq h_i, \exists i, a_i=0 \right\}
\]
and 
\[
\mathcal E_2 \coloneqq \left\{ (a_1,\ldots,a_n,b) \in \mathbb N^{n+1} : b\leq d, \sum_{i=1}^na_i \leq h, \exists i, a_i=0 \right\} \,.
\]
Since
\[\Card(\mathcal E_1) = (d+1)\cdot \left(\prod_{i=1}^n (h_i+1) - \prod_{i=1}^nh_i\right)\] 
and 
\[\Card (\mathcal E_2) =   (d+1)\cdot \left( \binom{n+h}{n}-\binom{h}{n} \right) \,, \] 
we have that $ \Card \big(\pi_G(C \cap \NN^{n+1})\big) \leq \min \{\Card(\mathcal E_1) , \Card(\mathcal E_2)\} =   N$, as wanted. 

%%%%%%%%%%%%%%%%%%%%%%%%%%%%%%%%
\section{Hadamard product and other similar products}\label{sec: hadamard}

Let $k$ be a field and $\bt\coloneqq(t_1,\ldots,t_n)$ be a vector of indeterminates. Given two
multivariate power series $f(\bt) \coloneqq\sum_{\bi\in\mathbb N^n}a(\bi)\bt^{\bi}$ and
$g(\bt)\coloneqq\sum_{\bi\in \mathbb N^n}b(\bi)\bt^{\bi}$ in $k[[\bt]]$, their Hadamard product is
defined by
\[
f\odot g \coloneqq \sum_{\bi\in\mathbb N^n}a(\bi)b(\bi)\bt^{\bi}\in k[[\bt]] \,.
\]
The Hadamard product is intimately connected to diagonalization. Indeed, we have
\[
\Delta(f)= \varphi\left(f\odot \frac{1}{1-t_1\cdots t_n}\right)\,,
\]
where $\varphi$ is the map defined by $t_1\cdots t_n \mapsto t$, while 
\[
f \odot g = \Delta_G(f(\bt)g(\by))\,,
\]
where $G$ is the subgroup of $\mathbb Z^{2n}$ generated by the vectors $v_i$, $1\leq i\leq n$,
whose $i$-th and $(i+n)$-th entries are $1$ and the other entries are $0$.

As with diagonals, if $k$ has characteristic zero, the Hadamard product of two algebraic power
series in $\bk [[\bt]]$ is in general transcendental.
The situation is totally different if $k$ has characteristic $p$.
When $k$ is a finite field and $n=1$, Furstenberg \cite{Fu67} proved that the ring of algebraic
power series is closed under Hadamard product.
This was independently extended to arbitrary fields $k$ of characteristic~$p$ and positive integers
$n$, by Denef and Lipshitz \cite{DL}, Sharif and Woodcock \cite{SW}, and Harase \cite{Ha88}. 
Harase \cite{Ha89} also obtained a quantitative version of this result when $k$ is a perfect field:
the degree of algebraicity of $f\odot g$ over $k(\bt)$ is bounded by $p^{Ap^B}$ for some $A$ and
$B$ that are made explicit and depend polynomially on the degrees and the heights of $f$ and $g$.

The following theorem improves considerably Harase's bound.  

\begin{theorem}
\label{thm: hadamard}
Let $k$ be a field of characteristic $p$ and let $f,g\in k[[\bt]]$ be two algebraic power series of
degree $d$ and $d'$, total height $h$ and $h'$, and partial height $\bh\coloneqq (h_1,\ldots,h_n)$ 
and $\bh'\coloneqq (h'_1,\ldots,h'_n)$, respectively. 
Set 
\[
N_1 \coloneqq (d+1) \cdot \min \left\{ \prod_{i=1}^n (h_i+1) , \binom{n+h}{n} \right\}
\]
and 
\[
N_2 \coloneqq (d'+1) \cdot \min \left\{ \prod_{i=1}^n (h'_i+1) , \binom{n+h'}{n} \right\} \,.
\]
Then  
$f \odot g$ is an algebraic power series of degree at most $p^{N_1 N_2}-1$ over $k(\bt)$. 
\end{theorem}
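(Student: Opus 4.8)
\textbf{Proof proposal for \cref{thm: hadamard}.}
The plan is to reduce the statement to \cref{thm:gendiag}, exactly along the lines already suggested by the identity
\[
f \odot g = \Delta_G\big(f(\bt)\,g(\by)\big),
\]
where $\by = (y_1,\ldots,y_n)$ is a second set of indeterminates and $G \subset \ZZ^{2n}$ is the subgroup generated by the vectors $v_i$ ($1 \leq i \leq n$) having $1$ in positions $i$ and $i+n$ and $0$ elsewhere.  The quotient $\ZZ^{2n}/G$ is free (it is abstractly $\ZZ^n$, with the class of a vector determined by the coordinatewise differences of its two halves), so \cref{thm:gendiag} applies.  First I would set $F(\bt,\by) \coloneqq f(\bt) g(\by) \in k[[\bt,\by]]$ and produce a polynomial annihilating $F$ over $k(\bt,\by)$: if $A(\bt,Y)$ is the minimal polynomial of $f$ over $k(\bt)$ and $B(\by,Z)$ that of $g$ over $k(\by)$, then $F$ is a root of the composed polynomial obtained as a resultant, and one checks that its degree in the ``diagonal variable'' is at most $d d'$ while its Newton polytope is controlled by those of $A$ and $B$.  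However — and this is the cleaner route — rather than build one annihilating polynomial for $F$, I would instead \emph{directly construct the invariant vector space}: by \cref{thm: main} there is a $k$-vector space $W_1 \subset \operatorname{Frac}(k[[\bt]])$ of dimension at most $N_1$, containing $f$ and closed under the section operators $S_{\br}$ in the variables $\bt$; symmetrically, a space $W_2$ of dimension at most $N_2$ for $g$ in the variables $\by$.

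The key step is then to observe that the tensor product $W \coloneqq W_1 \otimes_k W_2$, realized inside $\operatorname{Frac}(k[[\bt,\by]])$ as the $k$-span of the products $w_1(\bt) w_2(\by)$ with $w_i \in W_i$, has dimension at most $N_1 N_2$, contains $F = f \cdot g$, and is closed under all section operators $S_{\br}$ for $\br \in \ZZ^{2n}$ acting on the $2n$ variables $(\bt,\by)$.  The last point follows because a section operator on $(\bt,\by)$ in the standard basis $\{\bt^{\br_1}\by^{\br_2} : \br_1,\br_2 \in \{0,\ldots,p-1\}^n\}$ factors as the composition (equivalently, the ``tensor product'') of a section operator in $\bt$ and one in $\by$, and these act on the two tensor factors separately; semilinearity is compatible with this because the Frobenius on $k[[\bt,\by]]$ restricts to the Frobenius on each factor.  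In particular $W$ is closed under $S_{\br}$ for every $\br \in G$, since $G \subset \ZZ^{2n}$.  Now I would run the argument in the proof of \cref{thm:gendiag}: $\Delta_G(W)$ is closed under the $S_{\br}$, $\br \in G$ (because $\Delta_G$ commutes with those section operators); its $K_{0,G}$-span $V$ inside $K_G \simeq \operatorname{Frac}(k[[x_1,\ldots,x_n]])$ is finite dimensional of dimension at most $\dim_k W \leq N_1 N_2$; and the relative-Frobenius isomorphism $\psi$ shows that $V$ is stable under the $p$-th power map.  Consequently $\Delta_G(F), \Delta_G(F)^p, \ldots, \Delta_G(F)^{p^{N_1 N_2}}$ are $K_{0,G}$-linearly dependent, which exhibits $f \odot g = \Delta_G(F)$ (after the identification $K_{0,G} \simeq k(x_1,\ldots,x_n)$, i.e.\ $x_i \coloneqq t_i$) as a root of an Ore polynomial of $p$-degree at most $N_1 N_2$, hence of algebraicity degree at most $p^{N_1 N_2} - 1$.

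Two points require care.  First, \cref{thm: main} and \cref{thm:gendiag} are stated for \emph{perfect} $k$; to get the statement over an arbitrary field of characteristic $p$ I would invoke \cref{prop: descente} at the outset, replacing $k$ by the perfect closure of the finitely generated subfield generated by the coefficients of $f$ and $g$ — this changes neither the degrees nor the heights, and the degree of $f \odot g$ is likewise unaffected.  Second, and this is the main obstacle, one must verify cleanly that the section operators in $2n$ variables genuinely decompose as tensor products of section operators in the two halves, and that $W_1 \otimes_k W_2$ really does contain $f(\bt)g(\by)$ as an element of the relevant field of fractions rather than merely formally; the subtlety is that $W_1$ and $W_2$ live in fields of fractions and one must check that the natural map $W_1 \otimes_k W_2 \to \operatorname{Frac}(k[[\bt,\by]])$ is injective (so that the dimension bound $N_1 N_2$ is preserved) — this follows because $k[[\bt]]$ and $k[[\by]]$ are $k$-algebras whose tensor product embeds in $k[[\bt,\by]]$, and taking fractions is harmless here.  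Once this bookkeeping is in place the rest is a direct transcription of the proof of \cref{thm:gendiag}, so I do not expect further difficulties, and in particular the heights of $f$ and $g$ never need to be combined — the bound $N_1 N_2$ is simply the product of the two individual bounds from \cref{cor: main}.
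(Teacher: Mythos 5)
Your argument is correct, and it is careful about the two genuine subtleties (descent to a perfect closure via \cref{prop: descente}, and linear disjointness of $\operatorname{Frac}(k[[\bt]])$ and $\operatorname{Frac}(k[[\by]])$ inside $\operatorname{Frac}(k[[\bt,\by]])$, so that $W_1 \otimes_k W_2$ really has dimension $\leq N_1 N_2$). However, the route you take is not the one in the paper. The paper does \emph{not} pass through \cref{thm:gendiag} and the $2n$-variable identity $f \odot g = \Delta_G\big(f(\bt)g(\by)\big)$ (that identity is only stated as motivation in the opening of \cref{sec: hadamard}). Instead, it proves a small abstraction, \cref{prop: product}: for any bilinear product $\star$ on $k[[\bt]]$ satisfying the compatibility $S_{\br}(f\star g)\in\spn_k\{S_{\bi}f\star S_{\bj}g\}$, if $f_1,f_2$ sit inside $\Omega_n$-invariant spaces $W_1,W_2$ of dimensions $d_1,d_2$, then $\spn_k\{h_i\star h'_j\}$ (bases of $W_1,W_2$) is an $\Omega_n$-invariant space of dimension at most $d_1 d_2$ containing $f_1\star f_2$, whence, by the standard Ore-polynomial argument, $f_1\star f_2$ has degree at most $p^{d_1 d_2}-1$. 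For the Hadamard product, the compatibility hypothesis is verified by the one-line identity $S_{\br}(f\odot g)=S_{\br}(f)\odot S_{\br}(g)$, and \cref{thm: hadamard} follows by combining \cref{prop: product}, \cref{cor: main}, and \cref{prop: descente}.

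The two routes are closely related: after applying $\Delta_G$, your space $\Delta_G(W_1\otimes W_2)$ is precisely $\spn_k\{h_i\odot h'_j\}$, and the relative-Frobenius step in \cref{thm:gendiag} is exactly the ``classical'' step the paper invokes in \cref{prop: product}. What your version buys is conceptual unity, realizing the Hadamard product as an instance of the general diagonal machinery; what the paper's version buys is brevity (no detour through $2n$ variables) and, crucially, generality: the same \cref{prop: product} applies verbatim to the Hurwitz and Lamperti products of \cref{thm: HL}, which involve multinomial weights and hence do \emph{not} arise as generalized diagonals $\Delta_G$ of $f(\bt)g(\by)$. So the abstraction by bilinear products compatible with section operators, rather than by diagonals, is the more economical choice in this section. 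One stylistic note: where you wrote ``closed under all section operators $S_{\br}$ for $\br\in\ZZ^{2n}$'' you of course mean the finitely many operators indexed by $\{0,\ldots,p-1\}^{2n}$, and likewise ``$\br\in G$'' should be read modulo $p$; this is harmless but worth stating precisely, since it is exactly the semilinearity in the coefficients of the base-$p$ expansion that makes the $K_{0,G}$-span stable.
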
 

We will prove this theorem in \cref{ssec:proofs}.

\begin{remark} 
Again, we could also bound the total height $h$ of $f\odot g$ following the argument in
\cite[Proposition 5.2]{AB13}.
\end{remark}

%%%%%%%%%%%%%%%%%%%%%%
\subsection{Hurwitz and Lamperti products}\label{sec: HL}

All along this section, we only consider univariate power series, that is the case $n=1$. 
Given a field $k$ and two power series $f(t) \coloneqq\sum_{i=0}^{\infty}a(i)t^{i}$ and
$g(t)\coloneqq\sum_{i=0}^{\infty}b(i)t^{i}$ in $k[[t]]$, their Hurwitz product is defined by
\[
f \circ_{\text{H}} g \coloneqq \sum_{i=0}^{\infty} \left( \sum_{k=0}^{i}  \binom{i}{k} a(k) b(i-k)\right)t^{i}\in k[[t]] \,,
\]
and their Lamperti product  by 
\[
f \circ_{\text{L}} g \coloneqq \sum_{i=0}^{\infty} \left( \sum_{j+k+\ell=i}  
	\frac{i!}{j!k!\ell!} \alpha^{j}\beta^k \gamma^\ell a(j+k) b(\ell+k)\right)t^{i}\in k[[t]] \,,
\]
where the parameters $\alpha$, $\beta$, and $\gamma$ belong to $k$. 
The Lamperti product generalizes both the Hadamard product (taking $\alpha=\beta=0$ and $\gamma=1$)
and the Hurwitz product (taking $\alpha=\beta=1$ and $\gamma=0$).
When $k$ has positive characteristic, Harase \cite{Ha88,Ha89} proved that the Lamperti product of
two algebraic power series $f$ and $g$ remains algebraic and he provided a doubly exponential bound
(\emph{i.e.}\ of the form $p^{Ap^B}$) for the degree of $f \circ_{\text{L}} g$ (and thus for $f
\circ_{\text{H}} g$).
Again, our approach leads to a simply exponential bound.

\begin{theorem}
\label{thm: HL}
Let $k$ be a field of characteristic $p$ and let $f,g\in k[[t]]$ be two algebraic power series 
of  degree $d$ and $d'$ and height $h$ and $h'$, respectively. 
Set 
\[
N \coloneqq (d+1)(d'+1)(h+1)(h'+1) \,.
\]
Then $f \circ_{\text{L}} g$ is an algebraic power series of degree at most $p^N-1$ over $k(t)$. 
In particular, the same result holds for $f\odot g$ and $f \circ_{\text{H}} g$. 
\end{theorem}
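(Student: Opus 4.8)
The plan is to realize $f\circ_{\text{L}}g$ as a specialization of a \emph{generalized diagonal} handled by \cref{thm:gendiag}, together with one new observation that keeps the exponent as small as the stated $N$. By \cref{prop: descente} we may assume $\bk$ perfect. I would introduce five fresh indeterminates $x,y,z,u,v$ and set
\[
F \colonequal \frac{f(u)\,g(v)}{1-\alpha x-\beta y-\gamma z}\ \in\ \bk[[x,y,z,u,v]],
\]
and let $G\subset\ZZ^5$ be the rank-$3$ subgroup of exponent vectors $(e_x,e_y,e_z,e_u,e_v)$ with $e_u=e_x+e_y$ and $e_v=e_y+e_z$; then $\ZZ^5/G$ is torsion-free and $R_G\simeq\bk[[w_1,w_2,w_3]]$ with $w_1\leftrightarrow xu$, $w_2\leftrightarrow yuv$, $w_3\leftrightarrow zv$. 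Expanding $1/(1-\alpha x-\beta y-\gamma z)=\sum\frac{(p_1+p_2+p_3)!}{p_1!p_2!p_3!}\alpha^{p_1}\beta^{p_2}\gamma^{p_3}x^{p_1}y^{p_2}z^{p_3}$ and multiplying out, one computes directly that
\[
\Delta_G(F)=\sum_{p_1,p_2,p_3\geq 0}\frac{(p_1+p_2+p_3)!}{p_1!p_2!p_3!}\,\alpha^{p_1}\beta^{p_2}\gamma^{p_3}\,a(p_1+p_2)\,b(p_2+p_3)\;w_1^{p_1}w_2^{p_2}w_3^{p_3},
\]
so that the continuous $\bk$-algebra map $\psi\colon\bk[[w_1,w_2,w_3]]\to\bk[[t]]$, $w_i\mapsto t$, sends $\Delta_G(F)$ to $f\circ_{\text{L}}g$. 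For suitable choices of $\alpha,\beta,\gamma$ the same construction yields $f\odot g$ and $f\circ_{\text{H}}g$, so it suffices to treat $f\circ_{\text{L}}g$.

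The heart of the argument is to build, by hand, a finite-dimensional $\Omega_5$-invariant $\bk$-subspace of $\mathrm{Frac}(\bk[[x,y,z,u,v]])$ containing $F$, of dimension at most $N$. By \cref{thm: main} (equivalently \cref{cor: main}) applied with one variable, there are $\Omega_1$-invariant $\bk$-subspaces $W_f\ni f(u)$ and $W_g\ni g(v)$ with $\dim_{\bk}W_f\leq(d+1)(h+1)$ and $\dim_{\bk}W_g\leq(d'+1)(h'+1)$. The new ingredient concerns the rational factor: writing $B\colonequal 1-\alpha x-\beta y-\gamma z$ and using $\F(B)=B^p$ in characteristic $p$, one has $r\colonequal 1/B=B^{p-1}\cdot\F(B^{-1})$, so that for every $\br\in\{0,\dots,p-1\}^3$,
\[
S_{\br}(r)=S_{\br}\!\big(B^{p-1}\cdot\F(B^{-1})\big)=S_{\br}(B^{p-1})\cdot B^{-1};
\]
since $B^{p-1}$ has total degree $p-1$, the section $S_{\br}(B^{p-1})$ is a \emph{constant}, hence $S_{\br}(r)\in\bk\cdot r$ and $\Span_{\bk}\{r\}$ is $\Omega_3$-invariant of dimension $1$. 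Because section operators on a power series ring act multiplicatively on products of series in disjoint groups of variables, the $\bk$-span $\mathcal W$ of the products $r\cdot p(u)\cdot q(v)$ with $p\in W_f$, $q\in W_g$ is $\Omega_5$-invariant, contains $F=r\cdot f(u)\cdot g(v)$, and satisfies $\dim_{\bk}\mathcal W\leq 1\cdot(d+1)(h+1)\cdot(d'+1)(h'+1)=N$.

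Next I would rerun the proof of \cref{thm:gendiag} with $\mathcal W$ in place of the space used there: since $\Delta_G$ commutes with the section operators attached to $G$, the $K_{0,G}$-span $V$ of $\Delta_G(\mathcal W)$ inside $K_G=\mathrm{Frac}(R_G)$ is stable under those operators, has $\dim_{K_{0,G}}V\leq\dim_{\bk}\mathcal W\leq N$, and — by the relative-Frobenius argument — is stable under the Frobenius map. Hence $\Delta_G(F)^{p^s}\in V$ for all $s\geq 0$, so there are $c_0,\dots,c_N\in K_{0,G}=\bk(w_1,w_2,w_3)$, not all zero, with $\sum_{s=0}^{N}c_s\,\Delta_G(F)^{p^s}=0$. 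Finally, a specialization argument (clear denominators, normalize the Ore relation to be primitive with leading coefficient the common denominator $\delta\in\bk[w_1,w_2,w_3]$, check $\delta\notin\ker\psi$, and apply $\psi$) transports this identity to $\sum_{s=0}^{N}c_s'\,(f\circ_{\text{L}}g)^{p^s}=0$ with $c_0',\dots,c_N'\in\bk[t]$ not all zero; since the associated Ore polynomial is divisible by $Z$, this forces $\deg_{\bk(t)}(f\circ_{\text{L}}g)\leq p^{N}-1$, and the assertions for $f\odot g$ and $f\circ_{\text{H}}g$ follow.

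The step I expect to be the main obstacle is this last specialization, i.e.\ guaranteeing that the Ore relation does not collapse under $w_i\mapsto t$; this has to be argued carefully, as in the analogous passages elsewhere in the paper. The one genuinely new point, responsible for the sharp exponent $N$ rather than something larger, is the one-dimensionality of $\Span_{\bk}\{1/(1-\alpha x-\beta y-\gamma z)\}$ under the action of the section operators.
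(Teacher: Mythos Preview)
Your construction up through the Ore relation for $\Delta_G(F)$ over $\bk(w_1,w_2,w_3)$ is correct and elegant (the observation that $\Span_{\bk}\{1/B\}$ is $\Omega_3$-invariant is genuinely nice). However, the approach differs substantially from the paper's, and the place you flag as ``the main obstacle'' is in fact a real gap that you have not closed.

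The paper's proof is much shorter and avoids the five-variable detour entirely. It quotes Harase's identity
\[
S_{r}(f \circ_{\text{L}} g)=\sum_{\substack{0\leq s,t\\ s+t\leq r}}\frac{r!}{s!\,t!\,(r-s-t)!}\,\alpha^{s/p}\beta^{t/p}\gamma^{(r-s-t)/p}\,S_{r-t}(f)\circ_{\text{L}}S_{r-s}(g),
\]
which shows that $\circ_{\text{L}}$ satisfies the hypothesis~\eqref{eq: bilinear} of \cref{prop: product}, and then applies that proposition together with \cref{cor: main} (with $n=1$). No specialization is needed: one works directly with the $\Omega_1$-invariant $\bk$-space $\Span_{\bk}\{p\circ_{\text{L}}q: p\in W_f,\,q\in W_g\}\subset K$ of dimension at most $N$.

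Your route instead produces an Ore relation over $\bk(w_1,w_2,w_3)$ and then attempts to push it through $\psi:w_i\mapsto t$. There are \emph{no} analogous specialization passages in the paper; \cref{thm:gendiag} stops at the Ore relation over $K_{0,G}$ and never collapses variables further. The difficulty is genuine: the kernel of $\psi$ on $\bk[w_1,w_2,w_3]$ is the height-two prime ideal $(w_1-w_2,\,w_2-w_3)$, which is not principal, so primitivity of the Ore polynomial (i.e.\ $\gcd(c_0,\ldots,c_N)=1$) does \emph{not} prevent all $c_s$ from lying in it. Your proposed fix (choose the ``leading coefficient'' to be a specific $\delta$ and check $\delta\notin\ker\psi$) is not justified: the $c_s$ arise from linear algebra over $K_{0,G}$ and there is no canonical choice with a controllable specialization. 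Equally, $\psi$ does not intertwine section operators in any usable way (it sums along antidiagonals, and the $p$-th root in $S_r$ obstructs a clean formula), so the image $\psi(\Delta_G(\mathcal W))$ is not visibly $\Omega_1$-invariant.

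In short, your construction buys nothing over the direct argument: if one unwinds what $S_r$ does to $\Theta(F)=\psi\circ\Delta_G(F)$, one is led precisely to Harase's identity above. The paper simply quotes that identity and applies \cref{prop: product}, which closes the argument in one line and yields the bound $p^N-1$ without any specialization step.
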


%%%%%%%%%%%%%
\subsection{Proof of \cref{thm: hadamard,thm: HL}}
\label{ssec:proofs}
As previously, we let $K$ denote the field of fractions of $k[[\bt]]$ and $S_{\br}$,
$\br\in\{0,\ldots,p-1\}^n$ be the section operators.
We first deduce from \cref{thm: main} and \cref{prop: descente} the following
general result.

\begin{proposition}\label{prop: product}
Let $k$ be a field of characteristic $p$ and let $\star$ be a bilinear product defined over
$k[[\bt]]$ such that for all $f,g\in k[[\bt]]$ one has
\begin{equation}\label{eq: bilinear}
S_{\br}(f\star g) \in \spn_{k} \left\{ S_{\bi}f \star S_{\bj} g : \bi,\bj\in\{0,\ldots,p-1\}^n\right\} \,.
\end{equation}
Let $f_1,f_2\in k[[\bt]]$ and let us assume that for all $i\in \{1,2\}$ there exists a $k$-vector
space $W_i\subset K$ of dimension at most $d_i$ containing $f_i$ and invariant by~$\Omega_n$.
Then $f_1 \star f_2$ is an algebraic power series of degree at most $p^{d_1d_2}-1$ over $k(\bt)$. 
\end{proposition}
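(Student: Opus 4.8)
The plan is to build a finite-dimensional $k$-vector space $V \subset K$ that contains $f_1 \star f_2$ and is invariant under $\Omega_n$, and then to invoke Theorem~\ref{thm:B} (its ``only if'' direction, i.e.\ existence of an invariant space $\Rightarrow$ algebraicity) together with a dimension count to pin down the degree. First I would set $V \coloneqq \spn_k\{ u \star v : u \in W_1,\, v \in W_2\}$. Since $W_1$ and $W_2$ have dimensions $\leq d_1$ and $\leq d_2$, picking bases $(u_1,\dots,u_{d_1})$ and $(v_1,\dots,v_{d_2})$, the elements $u_a \star v_b$ span $V$, so $\dim_k V \leq d_1 d_2$. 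Clearly $f_1 \star f_2 = f_1 \star f_2 \in V$ since $f_1 \in W_1$ and $f_2 \in W_2$.

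The core of the argument is checking $\Omega_n$-invariance. It suffices to show $S_{\br}(V) \subseteq V$ for every $\br \in \{0,\dots,p-1\}^n$. Take a spanning element $u \star v$ with $u \in W_1$, $v \in W_2$. Hypothesis~\eqref{eq: bilinear} gives
\[
S_{\br}(u \star v) \in \spn_k\{ S_{\bi} u \star S_{\bj} v : \bi, \bj \in \{0,\dots,p-1\}^n\}.
\]
Because $W_1$ is $\Omega_n$-invariant, each $S_{\bi} u \in W_1$; likewise each $S_{\bj} v \in W_2$. Hence every term $S_{\bi} u \star S_{\bj} v$ lies in $V$, and therefore $S_{\br}(u \star v) \in V$. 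By $k$-linearity of $S_{\br}$ this extends to all of $V$, so $V$ is invariant under each section operator, hence under the monoid $\Omega_n$ they generate.

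There is one technical point to address before applying Theorem~\ref{thm:B}: that theorem is stated for a \emph{perfect} ground field, whereas here $k$ is an arbitrary field of characteristic $p$. This is handled exactly as indicated in Remark~\ref{rem: thm} and Section~\ref{sec: descente}: replace $k$ by its perfect closure $k_p$ (equivalently, by the perfect closure of the subfield generated over $\FF_p$ by the coefficients of $f_1$ and $f_2$), run the above construction there, and then use \cref{prop: descente} to descend the algebraicity degree back to $k$ without loss. Over the perfect field, Theorem~\ref{thm:B} applied to $f_1 \star f_2 \in V$ yields that $f_1 \star f_2$ is algebraic over $k(\bt)$.

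Finally I would extract the degree bound from the dimension of $V$. The standard way (used throughout the paper, e.g.\ in the proof of \cref{thm:gendiag}) is to observe that $\Omega_n$-invariance of $V$, combined with the semilinearity relation $S_{\br}(x y^p) = S_{\br}(x) y$, forces $V$ to be stable under the Frobenius map $\F$: indeed $\psi^{-1}(h) = \sum_{\br} S_{\br}(h) \otimes \bt^{\br}$ sends $V$ into $V \otimes_{k(\bt),\F} k(\bt)$, and an injectivity-plus-equal-dimension argument shows $\psi$ maps $V \otimes_{k(\bt),\F} k(\bt)$ onto $V$. Consequently all the powers $(f_1 \star f_2)^{p^s}$ lie in the $k(\bt)$-span of $V$, which has dimension at most $d_1 d_2$ over $k(\bt)$; hence $f_1 \star f_2, (f_1 \star f_2)^p, \dots, (f_1 \star f_2)^{p^{d_1 d_2}}$ are $k(\bt)$-linearly dependent, giving an Ore relation of $p$-degree at most $d_1 d_2$, and therefore an ordinary algebraic relation of degree at most $p^{d_1 d_2} - 1$. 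I expect the verification of Frobenius-stability of the $k(\bt)$-span (the $\psi$/relative-Frobenius bookkeeping) to be the only step requiring genuine care; the invariance check and the dimension count are essentially formal once~\eqref{eq: bilinear} is in hand.
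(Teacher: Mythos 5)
Your proof is correct and follows essentially the same route as the paper's: both construct the $k$-span $W$ of pairwise $\star$-products of bases of $W_1$ and $W_2$, check $\Omega_n$-invariance directly from \eqref{eq: bilinear}, handle an imperfect ground field by passing to the perfect closure and invoking \cref{prop: descente}, and conclude via the standard Ore-relation/Frobenius-stability argument (which the paper simply cites as classical and you spell out following the pattern of \cref{thm:gendiag}). The only small imprecision is the phrase ``by $k$-linearity of $S_{\br}$'': the section operators are $\F^{-1}$-semilinear over $k$, not $k$-linear, but since $k$ is perfect this still gives $S_{\br}(W)\subseteq W$ and your argument stands.
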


\begin{proof}
Let us first assume that $k$ is a perfect field. Let $h_1,\ldots,h_r$ be a basis of $W_1$ and
$h'_1,\ldots,h'_s$ be a basis of $W_2$.
Set 
\[
W \coloneqq \spn_k \{ h_i \star h'_j : 1\leq i\leq r, \, 1\leq j\leq s \}  \,.
\]
Then $W$ has dimension at most $d_1d_2$ and the bilinearity of $\star$ implies that $W$ contains $f
_1\star f_2$. On the other hand, since $W_1$ and $W_2$ are invariant under $\Omega_n$, we infer
from the bilinearity of $\star$, the semilinearity of the section operators, and \eqref{eq:
bilinear} that $W$ is also invariant under $\Omega_n$. Then it follows classically that $f_1\star
f_2$ is algebraic over $k(\bt)$ with degree at most $p^{d_1d_2}-1$ (cf., for instance, \cite{Ha89}
and \cite[Proposition 5.1]{AB13}).

Now, if $k$ is an arbitrary field of characteristic $p$, one can pass to its perfect closure
$k_{p}$ and then apply the previous argument to obtain that $f_1\star f_2$ as degree at most
$p^{d_1d_2}-1$ over $k_{p}(\bt)$. By \cref{prop: descente}, we deduce that the degree of
$f_1\star f_2$ over $k(\bt)$ is also at most $p^{d_1d_2}-1$, as wanted. 
\end{proof}

\begin{proof}[Proof of \cref{thm: hadamard}]
After noticing that 
\[
S_{\br} (f\odot g)= S_{\br}(f) \odot S_{\br}(g) \,,
\]
the proof follows directly  from \cref{prop: product} and \cref{cor: main}. 
\end{proof}

\begin{proof}[Proof of \cref{thm: HL}]
After noticing, as in \cite{Ha89}, that 
\[
S_{r} (f \circ_{\text{L}} g)= \sum_{\substack{ 0\leq s,r \\  s+t\leq r}} 
\frac{r!}{s!t!(r-s-t)!}  \alpha^{s/p} \beta^{t/p} \gamma^{(r-s-t)/p} S_{r-t}(f) \circ_{\text{L}}  S_{r-s}(g) \,,
\]
the proof follows directly from \cref{prop: product} and \cref{cor: main} (with $n=1$). 
\end{proof}

%%%%%%%%%%%%%%%%%%%%%%%%%%%%%%%%
\section{Algorithmic consequences of \cref{thm: main}}\label{sec: algo}

Throughout this section, we assume for simplicity that $k$ is a finite field~$\mathbb F_q$. We
address the question of the efficient computation of \emph{one} ``faraway'' coefficient of a power
series $f(\bt) \in \bk[[\bt]]$ assumed to be algebraic over $k(\bt)$.

It was pointed out in \cite[Corollary 4.5]{AlSh92} that the $M$-th term of an automatic sequence
(and more generally of a $k$-regular sequence) can be computed using $O(\log M)$ operations
in~$k$.
By Christol's theorem (and its
multivariate version) it follows that, given $\bi \coloneqq  (i_1, \ldots, i_n)$, the coefficient of
$\bt^{\bi} \coloneqq  t_1^{i_1} \cdots t_n^{i_n}$ in the expansion of $f$ can be computed in $O(\log M)$
operations in $k$, where $M \coloneqq \max(i_1, \ldots, i_n)$.
(For fields of characteristic zero, there is no algorithm achieving polynomial time in $\log M$ for the same task, even if $n=1$.)
However, this estimate is oversimplified in the sense that the $O(\cdot)$ hides
dependencies in the other parameters, namely the characteristic~$p$ of the ground field $k$,
the number of variables~$n$ and the various algebraicity degree and heights of~$f$.
The actual efficiency of any algorithm that computes the coefficient of $\bt^{\bi}$ of $f$ heavily
depends on these parameters, especially given that before entering the $O(\log M)$-part, some of
these algorithms may need to perform precomputations whose cost is so large with respect to the
other parameters, that the algorithms are highly inefficient in practice. This is particularly true
for the class of algorithms that start with building a $q$-automaton: the
running time of this precomputation depends on number of states of the automaton, which can be huge.
For these reasons, in the algorithmic design, it is important to care about the dependencies with
respect to all the other parameters.
This is the object of this section.

\subsection{The algorithm}

For algorithmic purposes, the starting point is always to find a suitable
finite representation of the objects we want to compute with. In our setting, it is of course not possible to represent a power series $f(\bt)$ by
its full sequence of coefficients in $\bk$, because this sequence is an infinite object. However,
when $f(\bt)$ is algebraic, we can hope to come back to a finite representation by working with an
annihilating polynomial of~$f(\bt)$, together with sufficiently many initial coefficients in the
expansion of~$f(\bt)$. This indeed works but requires some caution, given that the aforementioned
polynomial may in general have several roots.
In what follows, we shall encode an algebraic power series $f(\bt) \coloneqq  \sum_{\bi \in \NN^n} a(\bi)
\bt^{\bi}$ by the following data:
\begin{enumerate}[(a)]

\item its minimal polynomial $E(\bt, y) \in k[\bt, y]$ over $k(\bt)$, which we normalize (up to a
unit in $k$) by requiring it to have polynomial coefficients in $\bk[t]$ that are globally coprime,

\item a minimal element (for the product order on $\NN^n$), denoted by $\brho \coloneqq  (\rho_1, \ldots,
\rho_n)$, of $\NP\big(E_y(\bt, y)\big) \cap \NN^n$ (which is nonempty thanks to Lemma~\ref{lem:separable}),

\item the coefficients $a(\bi)$ for all tuples $\bi \coloneqq  (i_1, \ldots, i_n)$ with $0 \leq i_j \leq
\rho_j$ for all $1\leq j \leq n$.
\end{enumerate}

\begin{lemma}
\label{lem:encoding}
The previous data uniquely determines the algebraic power series~$f$.
\end{lemma}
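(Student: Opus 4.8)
The plan is to show that the three pieces of data in (a), (b), (c) pin down $f$ by exhibiting an algorithm that reconstructs the full coefficient sequence $(a(\bi))_{\bi \in \NN^n}$ from them, and by arguing along the way that no choice is ever left open. The key point is that $f$ is a \emph{simple} root of $E(\bt, y)$ by \cref{lem:separable}, so once we know $f$ modulo a high enough power of the maximal ideal $\mathfrak m = (t_1, \ldots, t_n)$, Hensel's lemma (or, concretely, Newton iteration) determines $f$ uniquely: if $g, g' \in \bk[[\bt]]$ both satisfy $E(\bt, g) = E(\bt, g') = 0$ and agree modulo $\mathfrak m^{N}$ for $N$ large enough that $E_y(\bt, g) \not\equiv 0 \bmod \mathfrak m^{N}$, then $g = g'$. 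Thus the whole problem reduces to showing that the finitely many coefficients listed in (c) already determine $f$ modulo a sufficient power of $\mathfrak m$.

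\textbf{Key steps.} First I would recall that since $\brho = (\rho_1, \ldots, \rho_n)$ lies in $\NP\big(E_y(\bt, y)\big) \cap \NN^n$, the monomial $\bt^{\brho}$ (possibly times $y^0$, i.e.\ the constant-in-$y$ part, or more precisely the relevant part of $E_y(\bt,f)$) contributes to $E_y(\bt, f)$ with a nonzero coefficient that is not killed by lower-order cancellation; concretely, writing $E_y(\bt, f) = \sum_{\bi} e(\bi) \bt^{\bi}$, the minimality of $\brho$ among the exponents appearing forces $\bt^{\brho}$ to be (up to lower terms that vanish) the lowest-order term, so that $E_y(\bt, f)$ has $\mathfrak m$-adic valuation exactly $|\brho| \coloneqq \rho_1 + \cdots + \rho_n$ — here I would need to check carefully that substituting $f$ (whose constant term is $a(0,\ldots,0)$, itself among the listed data) does not create extra cancellation at order $\brho$, which uses that $\brho$ is minimal \emph{for the product order} and that $E_y(\bt,f)\ne 0$. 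Second, the standard Newton-iteration estimate says: if $g \equiv f \bmod \mathfrak m^{k}$ with $k > 2\,|\brho|$, then the next Newton step $g - E(\bt,g)/E_y(\bt,g)$ agrees with $f$ modulo $\mathfrak m^{2(k - |\brho|)}$, and iterating recovers all of $f$. Third, and this is the crux, I would verify that the coefficients $a(\bi)$ with $0 \le i_j \le \rho_j$ — i.e.\ the ``box'' $\prod_j \{0, \ldots, \rho_j\}$ — suffice to compute $f \bmod \mathfrak m^{|\brho|+1}$, hence enough to seed the iteration. Since $E(\bt, f) = 0$ identically, comparing coefficients of $\bt^{\bn}$ gives a relation that expresses $e_0(\bt)\cdot(\text{coefficient of } \bt^{\bn} \text{ in } f)$ — where $e_0$ is the coefficient of $y$ in $E$, i.e.\ part of $E_y(\bt,0)$ — in terms of lower-order coefficients of $f$; combined with the normalization that the coefficients of $E$ are globally coprime and with knowledge of the listed initial segment, this recursively determines every further coefficient.

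\textbf{Main obstacle.} The delicate point is the bookkeeping around $\brho$: I must make sure that a \emph{single} minimal element $\brho$ of $\NP(E_y)\cap\NN^n$ genuinely controls the $\mathfrak m$-adic valuation of $E_y(\bt,f)$ and that the box of coefficients up to $\brho$ is exactly what is needed — neither too few (which would leave the Newton iteration unseeded) nor relying on coefficients outside the box. In the worst case $\NP(E_y)$ could have several minimal lattice points, and one has to argue that the \emph{chosen} one still does the job, or alternatively that the valuation of $E_y(\bt,f)$ is bounded by $|\brho|$ for any such choice; this is where I expect to spend the most care. Once that is settled, uniqueness follows formally: two power series solving $E = 0$ and sharing the listed initial data agree modulo $\mathfrak m^{|\brho|+1}$, hence by the Newton/Hensel argument agree modulo every power of $\mathfrak m$, hence are equal in $\bk[[\bt]]$.
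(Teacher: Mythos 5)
Your plan follows a genuinely different route from the paper's. The paper does not invoke Hensel's lemma or Newton iteration at all: it gives a direct formula for each $a(\bi)$ using the section-operator machinery built in \cref{sec: main}. Concretely, it writes $\bi$ in base $p$, applies the corresponding section operators to $f$ to get a power series $g$ whose constant term (raised to the $p^\ell$-th power) is $a(\bi)$, then uses Eq.~\eqref{Sr_P_Q} to express $g = Q(\bt,f)/E_y(\bt,f)$ with $Q$ an explicitly computable polynomial, and finally extracts $a(\bi) = \bigl([\bt^{\brho}]Q(\bt,f)/[\bt^{\brho}]E_y(\bt,f)\bigr)^{p^\ell}$ by comparing coefficients of $\bt^{\brho}$ and using the \emph{product-order} minimality of $\brho$ to kill all cross terms. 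This is exactly tailored to the ``box'' precision supplied by (c): it only ever needs to multiply out power series modulo $(t_1^{\rho_1+1},\ldots,t_n^{\rho_n+1})$.

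Your Hensel/Newton approach has a real gap at precisely the point you flagged. First, minimality of $\brho$ for the \emph{product order} does \emph{not} control the $\mathfrak m$-adic valuation of $E_y(\bt,f)$: another minimal element $\bv$ of the support with $|\bv|<|\brho|$ may exist (product order and total degree are incomparable), so $v_{\mathfrak m}(E_y(\bt,f))=|\brho|$ is not justified. Second, and more fundamentally, the data in (c) is the \emph{box} $\prod_j\{0,\ldots,\rho_j\}$, which neither contains nor is contained in $\{\bi:|\bi|\le|\brho|\}$; so knowing the box coefficients does not give you $f\bmod\mathfrak m^{|\brho|+1}$, and the ``recursion'' you propose from comparing coefficients of $E(\bt,f)=0$ is singular whenever $e_0(\bt)=E_y(\bt,0)$ vanishes at the origin (which is exactly the situation the lemma must handle, since $E$ need not be separable modulo $\mathfrak m$). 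You would need to produce the $\mathfrak m$-adic truncation of $f$ from the box data before Newton iteration can even start, and it is not clear this can be done without essentially reproving the paper's formula. The paper's argument is designed to bypass $\mathfrak m$-adic truncation entirely, which is why the box encoding works.
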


\begin{proof}
Let $\bi \coloneqq  (i_1, \ldots, i_n) \in \NN^n$ be a multiindex.
For each $j \in \{1, \ldots, n\}$, we write the decomposition in base $p$ of $i_j$
\[i_j = \sum_{m=0}^{\ell-1} r_{j,m}\: p^m \,, \]
where $\ell$ is a positive integer and all the $r_{j,m}$'s are integers between $0$ and $p{-}1$.
For each $m$, we form the tuple $\br_m \coloneqq  (r_{1,m}, \ldots, r_{n,m})$ and consider the corresponding
section operator $S_{\br_m}$.
It follows from the definitions that the coefficient $a(\bi)$ is equal to the $p^\ell$-th power of 
the constant coefficient of the power series
\[g(\bt) \coloneqq 
S_{\br_{\ell-1}} \circ \cdots \circ S_{\br_1} \circ
S_{\br_0}\big(f(\bt)\big)\,.\]
Moreover, by Eq. \eqref{Sr_P_Q},
there exists a polynomial $Q \in k[\bt, y]$ such that
\[g(\bt) = \frac{Q(\bt, f(\bt))}{E_y(\bt, f(\bt))}\,,
\quad\textit{i.e.}\quad
Q(\bt, f(\bt)) = g(\bt) \cdot E_y(\bt, f(\bt))\,.\]
Identifying the coefficients in $\bt^{\brho}$ in the latter equality, we find
\[[\bt^{\brho}] Q(\bt, f(\bt))
= \sum_{\bu + \bv = \brho} [\bt^{\bu}] 
  g(\bt) \cdot [\bt^{\bv}] E_y(\bt, f(\bt)) \, ,\]
where the notation $[\bt^{\bj}]\varphi(\bt)$ refers to the coefficients in front of $\bt^{\bj}$ in
the power series $\varphi(\bt)$. On the other hand, we derive from the definition of $\brho$ (and
especially from the minimality condition (b)) that the unique $\bv \leq \brho$ for which the
coefficient $[\bt^{\bv}] E_y(\bt, f(\bt))$ does not vanish is $\brho$ itself. Therefore, we
conclude that
\[[\bt^{\brho}] Q(\bt, f(\bt))
= [\bt^0] g(\bt) \cdot [\bt^{\brho}] E_y(\bt, f(\bt))\]
which gives
\[a(\bi) = \big([\bt^0] g(\bt)\big)^{p^\ell} 
  = \left(\frac{[\bt^{\brho}] Q(\bt, f(\bt))}
    {[\bt^{\brho}] E_y(\bt, f(\bt))}\right)^{p^\ell}\,.\]
Since $f(\bt)$ is given at precision $O(t_1^{\rho_1+1} \cdots t_n^{\rho_n+1})$, we can compute
$Q(\bt, f(\bt))$ at the same precision; this ensures that the coefficient $[\bt^{\brho}] Q(\bt,
f(\bt))$ can be recovered from the set of data that we have at our disposal. 
Hence the same holds for $a(\bi)$.
Since~$\bi$ was chosen arbitrarily at the beginning of the proof, the lemma is proved.
\end{proof}

Importantly, we notice that the proof of \cref{lem:encoding} together with the explicit
formula~\eqref{eq:formulaQ} translate immediately to Algorithm~\ref{algo:ith} which computes the
coefficient~$a(\bi)$ of the power series $f(\bt)$.
Note that Algorithm~\ref{algo:ith} can be seen as a multivariate version of the algorithms in
\cite[Section 3]{BCCD}.

\begin{algo}
  \caption{$\bi$-th coefficient of an algebraic power series.\label{algo:ith}}
  \begin{algoenv}
    {A multiindex $\bi = (i_1, \ldots, i_n)$ and an algebraic power series $f(\bt) \in \bk[[\bt]]$ encoded by $(E(\bt,y), \brho, f(\bt) \bmod \bt^{\brho + 1})$
     }
    {The $\bi$-th coefficient of $f(\bt)$}
    \State 1. For $j = 1, \ldots, n$,\\
      \hphantom{1. }write the decomposition of $i_j$ in base $p$: $i_j = \sum_{m=0}^{\ell-1} r_{j,m} \: p^m$
    \State 2. Set $Q_0(\bt, y) \coloneqq y \cdot E_y(\bt, y)$
    \State 3. For $i = 0, 1, \ldots, \ell - 1$,\\
      \hphantom{4. }set $Q_{i+1}(\bt, y) \coloneqq S_{f, r_{1,m}, \ldots, r_{n,m}, p-1}\big(Q_i(\bt, y) \cdot E(\bt,y)^{p-1}\big)$
    \State 4. Compute $Q_\ell(\bt, f(\bt)) \bmod \bt^{\brho + 1}$ and set $\alpha \coloneqq [\bt^{\brho}] Q_\ell(\bt, f(\bt))$
    \State 5. Compute $E_y(\bt, f(\bt)) \bmod \bt^{\brho + 1}$ and set $\beta \coloneqq [\bt^{\brho}] E_y(\bt, f(\bt))$
    \State 6. Return $(\alpha/\beta)^{p^\ell}$
  \end{algoenv}
\end{algo}

\smallskip
We now study the (arithmetic) complexity of Algorithm~\ref{algo:ith}. In what follows, we will
express complexity of algorithms in terms of the number of operations they perform in the ground
field~$k$. By ``operation'', we mean either a classical arithmetical (field) operation (addition,
subtraction, multiplication, division) or an application of the Frobenius map $\F$, or its inverse.
We recall the soft-$O$ notation $\softO$: by definition, $\softO(c)$ is the union of the $O(c
\log^k(c))$ for $k$ varying in~$\NN$. Using~$\softO$ instead of the more customary $O$-notation
makes it possible to ``hide'' logarithmic factors.

\begin{theorem}
\label{thm:complexity}
Let $f(\bt) \in k[[\bt]]$ be an algebraic power series encoded by the data 
\[(E(\bt, y), \brho, f(\bt) \bmod \bt^{\brho + 1}) \,.\]
Let $\bh \colonequal (h_1, \ldots, h_n)$ be the vector of partial heights of $E(\bt, y)$ and $d$ be its
degree.
On input $f(\bt)$ and $\bi \colonequal (i_1, \ldots, i_n)$, Algorithm~\ref{algo:ith} performs at most
\[\softO\big(
  2^n d p^{n+1} (h_1{+}1) \cdots (h_n{+}1) \log M
  + 2^n (\rho_1{+}1) \cdots (\rho_n{+}1)\big)\]
operations in $k$, where $M \colonequal \max(i_1, \ldots, i_n)$.
\end{theorem}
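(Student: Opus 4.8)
The plan is to bound the cost of each of the six steps of Algorithm~\ref{algo:ith} separately, in terms of the number of field operations in~$k$, keeping careful track of the dependence on $n$, $p$, $d$, the partial heights $\bh$, and~$M$. The key observation is that the polynomials $Q_i(\bt, y)$ produced by the loop all have Newton polytope contained in the set $C' \coloneqq \NP(E) + J^{n+1}$ appearing in the proof of \cref{thm: main}; in particular, $\deg_{t_j} Q_i \leq h_j$ and $\deg_y Q_i \leq d$, so each $Q_i$ has at most $(d{+}1)(h_1{+}1)\cdots(h_n{+}1)$ monomials. First I would analyze Step~1: computing the base-$p$ expansions of $i_1, \ldots, i_n$ requires $\ell = O(\log M)$ divisions per coordinate, hence $O(n \log M)$ operations, which is negligible.

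Next I would handle the main loop (Step~3), which runs $\ell = O(\log M)$ times. In each iteration one first forms the product $Q_i(\bt,y) \cdot E(\bt,y)^{p-1}$. Rather than recomputing $E^{p-1}$ each time, precompute it once; by the Frobenius, $E(\bt,y)^{p-1} = E(\bt,y)^p / E(\bt,y)$, and $E^p = \F(E)(\bt^p, y^p)$ costs essentially nothing extra beyond one exact division. The product $Q_i \cdot E^{p-1}$ is a product of two polynomials in $n{+}1$ variables whose partial degrees in each $t_j$ are at most $h_j$ (for $Q_i$ it is $\leq h_j$; for $E^{p-1}$ it is $\leq (p{-}1)h_j$), and whose $y$-degrees are $\leq d$ and $\leq (p{-}1)d$ respectively. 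Using Kronecker substitution to reduce multivariate multiplication to univariate multiplication, together with a fast (FFT-based) univariate multiplication routine, this product can be computed in $\softO$ of the number of monomials of the result, namely $\softO\big(p^{n+1} d (h_1{+}1)\cdots(h_n{+}1)\big)$ operations in~$k$. Applying the section operator $S_{f,\br_m,p-1}$ amounts to extracting, for each residue class of the exponent tuple modulo $p$, the relevant subpolynomial and applying $\F^{-1}$ coefficientwise; this is a linear pass over the $\softO\big(p^{n+1} d (h_1{+}1)\cdots(h_n{+}1)\big)$ coefficients, where the factor $2^n$ enters because of the combinatorics of selecting the appropriate residue classes among the $n$ variables $t_1, \ldots, t_n$ (but only $s = p{-}1$ is fixed for $y$). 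Summing over the $O(\log M)$ iterations gives the first term $\softO\big(2^n d p^{n+1}(h_1{+}1)\cdots(h_n{+}1)\log M\big)$.

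Finally I would bound Steps~4 and~5: evaluating a polynomial of the above shape at $y = f(\bt)$ modulo $\bt^{\brho + 1}$ is a composition/evaluation of power series truncated at precision $(\rho_1{+}1)\cdots(\rho_n{+}1)$ coefficients; using Horner's scheme in~$y$ with $\deg_y \leq d$ and truncated power-series arithmetic (again via Kronecker substitution and fast multiplication), together with the bound $\rho_j \leq h_j$ which holds since $\brho \in \NP(E_y) \cap \NN^n$, this costs $\softO\big(2^n d (\rho_1{+}1)\cdots(\rho_n{+}1)\big)$, absorbed into the stated bound; extracting the coefficient of $\bt^{\brho}$ and performing the final division in Step~6 (plus $O(\log p^\ell) = O(\ell \log p)$ operations for the final $p^\ell$-th power via repeated Frobenius) are all dominated. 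Collecting the two governing contributions—the loop and the evaluations—yields the claimed complexity. The main obstacle I anticipate is making the multivariate multiplication and power-series truncation bounds genuinely clean: one must argue that Kronecker substitution does not blow up the $2^n$ factor beyond what is claimed and that the section operator extraction is truly linear in the coefficient count; the degree bookkeeping via $C'$ from the proof of \cref{thm: main} is what keeps this under control.
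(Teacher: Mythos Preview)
Your approach is essentially the paper's: bound the partial degrees of the $Q_i$'s via the Newton-polytope containment from the proof of \cref{thm: main}, cost each loop iteration by Kronecker substitution plus fast univariate multiplication, and then bound the truncated evaluations in lines~4--5 separately.

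Two small corrections are in order. First, your attribution of the factor $2^n$ is off: it does not come from ``the combinatorics of selecting residue classes'' when applying $S_{f,\br,p-1}$ (that step is a single linear pass over the coefficient array and contributes no such factor). It comes from the Kronecker substitution itself: the product of two polynomials with $t_j$-degree at most $h_j$ has $t_j$-degree at most $2h_j$, so after packing into one variable the univariate degree picks up a factor of roughly $2$ per variable, yielding $2^n$ (this is exactly the $\softO\big(2^n d\,(h_1{+}1)\cdots(h_n{+}1)\big)$ multiplication bound the paper invokes). Second, the inequality $\rho_j \leq h_j$ you invoke is not valid in general: $\brho$ is a minimal element of the support of the \emph{power series} $E_y(\bt,f(\bt))$, not of the polynomial $E_y(\bt,y)$, and substituting the series $f$ destroys any such degree bound. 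The paper's own remark following the theorem only establishes $\rho_j \leq 2dh_j$ via a resultant argument. Fortunately you do not need this inequality at all, since the second term in the complexity bound is already expressed in terms of the~$\rho_j$.
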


\begin{proof} 
By construction, all intermediate polynomials $Q_i(\bt,y)$ have partial heights at most~$\bh$ and
degree at most~$d$.
Recall that polynomials in $k[\bt,y]$ of degree at most $d$ and partial heights at most $\bh =
(h_1, \ldots, h_n)$ can be multiplied in $\softO \left( 2^n d (h_1+1) \cdots (h_n+1) \right)$
operations in $k$, using Fast Fourier Transform (FFT) multiplication of univariate
polynomials~\cite{CaKa91} and Kronecker's substitution~\cite{Pan94}.
Therefore, each iteration of the loop in line~3 requires at most
$\softO\big(2^n d p^{n+1} (h_1{+}1) \cdots (h_n{+}1)\big)$ operations in~$k$.
Besides, the number of times this loop is executed, namely $\ell$, grows at most logarithmically
with respect to~$M$.
The total cost of line 3 then stays within
\[\softO\big(
  2^n d p^{n+1} (h_1{+}1) \cdots (h_n{+}1) \log M\big)\]
operations in $k$.
Similarly, the computations in lines~4 and~5 require at most $\softO\big(2^n (\rho_1{+}1) \cdots
(\rho_n{+}1)\big)$ operations in~$k$. 
Adding both contributions, we find the announced complexity.
\end{proof}

\begin{remark}
A small optimization can be applied to Algorithm~\ref{algo:ith}.
It consists in computing $Q_i^{\F^i}$ (that is the polynomial obtained from $Q_i$ by applying $\F^i$
to each of its coefficients) instead of $Q_i$ on line 3, thanks to the recurrence relation:
\[Q_{i+1}^{F^{i+1}}(\bt, y) = 
  S^\F_{f, r_{1,m}, \ldots, r_{n,m}, p-1}\big(Q_i^{F^i}(\bt, y) \cdot E^{\F^i}(\bt,y)^{p-1}\big)\,, \]
where $S^\F_{f,\br,s}$ is the same operator as $S_{f,\br,s}$ except that we do not take preimages
of the coefficients by the Frobenius map $\F$.
Proceeding this way, we retrieve $\F^\ell(\alpha) = \alpha^{p^\ell}$ by selecting the
coefficient of $\bt^{\brho}$ in $Q_\ell^{\F^\ell} (\bt, f(\bt))$ and can return
$\F^\ell(\alpha)/\beta^{p^\ell}$ on line~6.
With this optimization, it becomes unnecessary to apply inverses of~$\F$ in 
Algorithm~\ref{algo:ith}. 
This may be beneficial since, in practice, applying $\F^{-1}$ may be a more expensive operation 
than applying $\F$.
\end{remark}

\begin{remark}
It is possible to give \emph{a priori} bounds on $\brho$ in terms of $\bh$ and~$d$.
Indeed, let $r(\bt)$ be the resultant in $y$ of the polynomials $E(\bt, y)$ and $E_y(\bt, y)$. A
calculation shows that the $t_i$-degree of $r(\bt)$ is upper bounded by $2 d h_i$.
Besides, it follows from the standard properties of resultants that $E_y(\bt, f(\bt))$ divides
$r(\bt)$ in the ring $k[[\bt]]$.
Hence the Newton polytope of $E_y(\bt, f(\bt))$ necessarily has one point in the box $[0, 2 d h_1]
\times \cdots \times [0, 2 d h_n]$, from which we derive that one can always choose $\brho \leq 2 d
{\cdot} \bh$.
The above discussion shows that the complexity of Algorithm~\ref{algo:ith} can be controlled in
terms of $\bh$, $d$ and $\log M$ only: to simplify notation, setting 
\[H \colonequal (h_1{+}1) \cdots (h_n{+}1)\,,\] 
the number of operations used by Algorithm~\ref{algo:ith} is at most
\begin{equation}\label{complexity}
	\softO\big(2^n d p^{n+1} H \log M + 4^n d^n H \big)\,.
\end{equation}
\end{remark}

For $n=1$ and $h_1= h >0$, the cost \eqref{complexity} reads $\softO\big(d h p^{2} \log M
\big)$.
This is similar to the complexity estimate $\softO\big( h (d+h) p^{2} + h^2 (d+h)^2 \log M \big)$
of~\cite[Algorithm 3]{BoChDu16}.
Faster algorithms are available when~$n=1$, with quasi-linear complexity in~$p$:
$\softO\big(h^2(h+d)^2 \log M + h(h+d)^5 p\big)$ (\cite[Theorem~11]{BoChDu16}) and even
$\softO\big(d^2 h^2 \log M + d^2 h p + d^3 h \big)$ (\cite[Theorem~3.4]{BCCD}).

For a general $n$, the complexity estimate \eqref{complexity} is exponentially better (with respect
to~$p$) than all estimates that could be deduced from known approaches. This important improvement
is of course ultimately inherited from~\cref{thm: main}.
It would be interesting to design faster variants of Algorithm~\ref{algo:ith}, whose complexities
improve the estimate \eqref{complexity}, e.g. replacing the term $p^{n+1}$ by $p^{n}$.

\subsection{Applications to diagonals}

Assume that $f(\bt)\in \mathbb F_p[[\bt]]$ is an algebraic power 
series in $n$ variables of
degree $d$ and partial height $\bh\coloneqq (h_1,\ldots,h_n)$.
We consider here the following algorithmic problem:
given $M\in\N$, how fast can one compute the $M$-th coefficient
in the expansion of the diagonal $\Delta(f)$?

When $f$ is rational, Rowland and Yassawi proposed in \cite[Section 2]{RoYa15} two algorithms to
compute finite automata that could be used to compute terms of the coefficient sequence of
$\Delta(f)$. However, the number of states of the produced automata is prohibitively
large: Remark 2.2 in \cite{RoYa15} provides an upper bound of the form $p^{(h+1)^n}$, where $h$ is
the total height of~$f$.

By~\cref{thm: modp} and the discussion above, $\Delta(f)$ is an algebraic power series in $\mathbb F_p[[t]]$ of
degree $D_\Delta \leq p^N-1$ and height $H_\Delta\leq Np^N$ where $N$ is at
most $(d+1)H$ and, as before, $H = (h_1{+}1) \cdots (h_n{+}1)$.
Hence, a ``naive'' way of computing the coefficient $[t^M] \Delta(f)$ would be to
first determine an annihilating polynomial $E\in \mathbb F_p[t,y]$ for $\Delta(f)$, and
then to apply the algorithms of~\cite{BoChDu16} or \cite{BCCD} to this $E$.
Starting from the first $2(D_\Delta{+}1)(H_\Delta{+}1)$ terms in the 
expansion of $\Delta(f)$, one can compute such an~$E$, by (structured)
linear algebra, using
$\tilde{O}(D_\Delta^{\omega} H_\Delta) \subseteq \tilde{O}(N p^{(\omega+1)N})$
operations in~$\mathbb{F}_p$, as explained in \cite[p.~128]{BCCD}.
Here $\omega \in [2,3]$ is a feasible exponent for the matrix
multiplication.
Hence, using~\cite[Theorem~3.4]{BCCD}, we conclude that, for $M\gg 0$, 
the coefficient $[t^M] \Delta(f)$ can be computed in 
\begin{multline*}
\softO\big(H_\Delta^2(H_\Delta+D_\Delta)^2 \log M + D_\Delta^2 H_\Delta p + D_\Delta^3 H_\Delta \big) \\
\subseteq \softO\big(d^4 H^4 p^{4(d+1)H} \log M\big)
\end{multline*}
operations in $\mathbb{F}_p$.
This complexity estimate is quasi-optimal with respect to~$M$, but its dependence in~$p$ is far from optimal.

A much better method is to apply Algorithm~\ref{algo:ith} with $\bi = (M, \ldots, M)$ directly,
without first precomputing an annihilating polynomial for $\Delta(f)$. The resulting
complexity is then provided by \cref{thm:complexity}, and is bounded in terms of $n, p, d, h_1,
\ldots, h_n$ by the estimate in~\eqref{complexity}, namely
\[
	\softO\big(2^n d p^{n+1} H \log M + 4^n d^n H \big)\,.
\]
This method then allows to compute the $M$-th coefficient of the 
diagonal of~$f(t_1,\ldots,t_n)$ in arithmetic complexity linear 
in~$\log M$ and quasi-linear in $p^{n+1}$. This result was previously 
known only in the bivariate case. For more than two variables, 
previous algorithms with complexity linear in~$\log M$ required (at 
least) doubly exponential time in the arithmetic size of~$f$.

Note finally that the truth of Christol's conjecture~\cite[Conjecture~4]{Ch90} would imply that
$\log(M)$-time algorithms for computing the $M$-th term modulo~$p$ might well exist for the whole
class of integer sequences with geometric growth which are P-finite (i.e., which satisfy linear
recurrence relations with polynomial coefficients in the index~$n$).

\subsection{Examples}

We consider the Ap\'ery numbers $A(n) \colonequal \sum_{k=0}^n
\binom{n}{k}^2\binom{n+k}{k}^2 $. 
Their generating function $\sum_{n=0}^\infty A(n) t^n$ is the
diagonal of the rational function in $n=4$ variables~\cite{St14}
\[ f(t_1, t_2, t_3, t_4) \colonequal \frac{1}{(1-t_1-t_2)(1-t_3-t_4) - t_1t_2t_3t_4} \,\cdot\]
Hence, by~\eqref{complexity}, $A(M) \bmod p$ can be computed in $O (p^{5} \log M)$ operations
in~$\mathbb{F}_p$. This estimate can be lowered to $\softO (p^{4} \log M)$ by using that $\sum_{n
\geq 0} A(n) t^n$ is also the diagonal of an algebraic function in $n=3$ variables.
In this particular case, a better complexity bound 
can be obtained by exploiting nontrivial arithmetic properties of the 
Apéry numbers.
Indeed, it turns out that the sequence $(A(n))_{n\geq 0}$ is
$p$-Lucas~\cite{Ge82}: if $M=(i_{\ell-1} \ldots i_1 i_0)_p$ is the base-$p$ expansion of $M$, then
$A(M) = A({i_{\ell-1}}) \cdots A({i_1}) A({i_0}) \bmod p$, hence it is sufficient to precompute $A(0) \bmod p,
\ldots, A({p{-}1}) \bmod p$. This can be done for a cost of $\softO(p^{4})$ 
operations in~$\mathbb{F}_p$; after this, computing $A(M) \bmod p$ only requires $O(\log M)$ extra operations in~$\mathbb{F}_p$.

However, beyond this example, many other interesting integer sequences 
are not $p$-Lucas, but are known to admit diagonals of algebraic 
functions as generating functions.
Most of the integer sequences from the combinatorial literature, which are P-finite and have a
geometric growth, are known to fall into this class.
For instance, the sequence $1, 6, 222, 9918, \ldots$ (\href{http://oeis.org/A144045}{A144045})
counting diagonal rook paths on a 3D chessboard was proved in~\cite{BCHP} to satisfy the recurrence
\begin{multline*}
2n^2(n-1)a(n) -(n-1)(121n^2-91n-6)a(n-1) \\
- (n-2) (475 n^2-2512n+2829)a(n-2)
  + 18(n-3)(97n^2-519n+702)a(n-3) \\
- 1152 (n-3) (n-4)^2a(n-4)= 0\,,  \qquad \text{for~$n\geq 4$}\,,
\end{multline*}      
and to admit a generating function 
$F(t) \coloneqq  \sum_{n \geq 0} a(n) t^n$ equal to 
\[
F(t)  =  1 + 6 \cdot \bigintsss_0^t \frac{ \,\pFq21{1/3}{2/3}{2}
  {\displaystyle \frac{27 w(2-3w)}{(1-4w)^3}}}{(1-4w)(1-64w)} \, dw \,.
\]  
Given a prime~$p$, neither the recurrence, nor the closed form of $F(t)$ are well-suited to compute
rapidly the value $a(M) \bmod p$ for high values of~$M$.
In exchange, $a(M)$ is the $M$-th coefficient of the diagonal of the rational function in $3$ 
variables
\[
f(t_1, t_2, t_3) \colonequal
\frac{(1-{t_1})(1-{t_2})(1-{t_3})}{1-2({t_1}+{t_2}+{t_3})+3({t_1}{t_2}+{t_2}{t_3}+{t_1}{t_3})-4{t_1}{t_2}{t_3}} \, \cdot \] 
Hence, by~\eqref{complexity}, $a(M) \bmod p$ can be computed using $\softO (p^4  \log M)$
operations in $\mathbb{F}_p$.

A similar example is given by the sequence $1, 2, 18, 255, 4522, \ldots$
(\href{http://oeis.org/A151362}{A151362}) whose $n$-th term $q(n)$ counts walks in the quarter
plane $\N^2$ of length~$2n$ starting at the origin and using steps in the set $\{$N, S, NE, SE, NW,
SW$\}$.
The generating function $Q(t) \coloneqq  \sum_{n \geq 0} q(n) t^{2n}$ of this sequence is known to be transcendental over $\mathbb{Q}(t)$ and to admit the $_2 F _1$ expression~\cite{BCHKP}
\[
Q(t) = 
{\frac {2}{{t}^{2}}\bigintsss_{0}^t \!\!\!\bigintsss_{0}^y \,{\frac {1}{ \left( 12\,{z}^{2}+1 \right) ^{3/2}}
\cdot \pFq21{3/4}{5/4}{2}
  { \frac {64 \,{z}^{2}}{ \left( 12\,{z}^{2}+1 \right) ^{2}}}}
  \,{\rm d}z\,{\rm d}y}
\]
and also the diagonal expression~\cite{MM}
 \[
Q(t) = 
 \Delta \left(
{\frac { \left( t_2^2-1 \right)  \left( t_3^2-1 \right) }{1-t_1 \left( t_2^2t_3^2+ t_2t_3^2+t_2^2+t_3^2+t_2
+1 \right) }}
\right)\,,
 \]
with $n=3$ and $(h_1,h_2,h_3) = (1,2,2)$. 
Using the last expression, $q(M) \bmod p$ can be computed in 
 $\softO (p^{4}  \log M)$
operations in~$\mathbb{F}_p$.
The same remark actually applies to all $19 \times 4 - 3$ transcendental generating functions of the form $Q(0,0), Q(1,0), Q(0,1)$ and $Q(1,1)$ from~\cite[Theorem 2]{BCHKP}.

\bibliographystyle{alpha}

\newcommand{\etalchar}[1]{$^{#1}$}

\end{document}